\titleformat{\section}{\Large\bfseries}{\thesection}{1.5pc}{} \titlespacing{\section}{0cm}{3pc}{2pc}
\titleformat{\subsection}{\it}{\thesubsection.}{0.5pc}{} \titlespacing{\subsection}{0cm}{1.5pc}{3mm}
\renewcommand{\thesection}{\bf \S\arabic{section}.}
\def\@citex[#1]#2{\if@filesw\immediate\write\@auxout{\string\citation{#2}}\fi
  \def\@citea{}\@cite{\@for\@citeb:=#2\do
    {\@citea\def\@citea{\@citesep}\@ifundefined
       {b@\@citeb}{{\bf ?}\@warning
       {Citation `\@citeb' on page \thepage \space undefined}}%
{\csname b@\@citeb\endcsname}}}{#1}}
\def\@citesep{; }
\newtheoremstyle{HU}{}{}{\itshape}{}{\bf}{}{.6em}{}
\theoremstyle{HU}
\newtheorem{theorem}{Theorem}[section]
\newtheorem{lemma}[theorem]{Lemma}
\newtheorem{prop}[theorem]{Proposition}
\newtheorem{defn}[theorem]{Definition}
\newtheoremstyle{Hremark}{}{}{}{}{\bf}{}{.6em}{}
\theoremstyle{Hremark}
\newtheorem{other}{}
\newenvironment{idef}[1]{\begin{other}}{\end{other}}
\numberwithin{figure}{section}
\def\fn#1{\operatorname{#1}} 
\def\bm#1{\mathbbm{#1}}
\def\dW{\mathop{\hbox{\hbox to 1eM{$\underline{W\!\!}$}}}}
\title{Noether's Problem for Groups of Order 243}
\author{Huah Chu$^{(1)}$, Akinari Hoshi$^{(2)}$, Shou-Jen Hu$^{(3)}$, Ming-chang Kang$^{(4)}$ \\[3mm]
\begin{minipage}{16cm} \begin{description} \itemsep=-1pt
\item[] $^{(1)}$Department of Mathematics, National Taiwan
University, Taipei,\\ Taiwan; hchu@math.ntu.edu.tw
\item[]$^{(2)}$Department of Mathematics, Niigata University, Niigata, Japan;\\
hoshi@math.sc.niigata-u.ac.jp
\item[]$^{(3)}$Department of Mathematics, Tamkang University, Taipei, Taiwan;\\
sjhu@math.tku.edu.tw \item[] $^{(4)}$Department of Mathematics,
National Taiwan University, Taipei,\\ Taiwan; kang@math.ntu.edu.tw
\end{description} \end{minipage}}
\date{}
\begin{document}

\maketitle \footnote{\hspace*{-7.5mm} 2010 Mathematics Subject
Classification.
Primary 13A50, 11R32, 14E08, Secondary 12F12. \\
Keywords and phrases: Noether's problem, rationality problem,
$p$-groups. \\
The second-named author was partially supported by JSPS KAKENHI,
Grant Number 25400027.}

\begin{abstract}{\noindent Abstract.}
Let $k$ be any field, $G$ be a finite group. Let $G$ act on the
rational function field $k(x_g:g\in G)$ by $k$-automorphisms
defined by $h\cdot x_g=x_{hg}$ for any $g,h\in G$. Denote by
$k(G)=k(x_g:g\in G)^G$ the fixed field. Noether's problem asks,
under what situations, the fixed field $k(G)$ will be rational (=
purely transcendental) over $k$. According to the data base of GAP
there are $10$ isoclinism families for groups of order $243$.  It
is known that there are precisely $3$ groups $G$ of order $243$
(they consist of the isoclinism family $\Phi_{10}$) such that the
unramified Brauer group of $\bm{C}(G)$ over $\bm{C}$ is
non-trivial. Thus $\bm{C}(G)$ is not rational over $\bm{C}$. We
will prove that, if $\zeta_9 \in k$, then $k(G)$ is rational over
$k$ for groups of order $243$ other than these $3$ groups, except
possibly for groups belonging to the isoclinism family $\Phi_7$.
\end{abstract}

\newpage
\section{Introduction}

Let $k$ be a field, and $L$ be a finitely generated field
extension of $k$. $L$ is called $k$-rational (or rational over
$k$) if $L$ is purely transcendental over $k$, i.e. $L$ is
isomorphic to some rational function field over $k$. $L$ is called
stably $k$-rational if $L(y_1,\ldots,y_m)$ is $k$-rational for
some $y_1,\ldots,y_m$ which are algebraically independent over
$L$. $L$ is called $k$-unirational if $L$ is $k$-isomorphic to a
subfield of some $k$-rational field extension of $k$. It is easy
to see that ``$k$-rational" $\Rightarrow$ ``stably $k$-rational"
$\Rightarrow$ ``$k$-unirational".

A classical question, the L\"uroth problem by some people, asks
whether a $k$-unirational field $L$ is necessarily $k$-rational.
For a survey of the question, see \cite{MT} and \cite{CTS}.

Noether's problem is a special case of the above L\"uroth problem.

Let $k$ be a field and $G$ be a finite group. Let $G$ act on the
rational function field $k(x_g:g\in G)$ by $k$-automorphisms
defined by $h\cdot x_g=x_{hg}$ for any $g,h\in G$. Denote by
$k(G)$ the fixed subfield, i.e. $k(G)=k(x_g:g\in G)^G$. Noether's
problem asks, under what situation, the field $k(G)$ is
$k$-rational.

\begin{theorem}[Fischer {\cite{Fi}, see also \cite[Theorem 6.1]{Sw2}}] \label{t2.2}
Let $G$ be a finite abelian group of exponent $e$,
$k$ be a field containing a primitive $e$-th root of unity.
Then $k(G)$ is $k$-rational.
\end{theorem}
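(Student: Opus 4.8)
The plan is to diagonalise the regular representation and thereby reduce Noether's problem for the abelian group $G$ to a purely multiplicative invariant problem on a lattice. Write $n = |G|$ and let $\hat G = \operatorname{Hom}(G, k^\times)$ be the character group. Because $k$ contains a primitive $e$-th root of unity and every element of $G$ has order dividing $e$, all $n$ irreducible characters of $G$ take values in $\mu_e \subseteq k^\times$; in particular $|\hat G| = n$. For each $\chi \in \hat G$ I would set
\[ y_\chi = \sum_{g \in G} \chi(g)\, x_g. \]
A direct computation from $h \cdot x_g = x_{hg}$ gives $h \cdot y_\chi = \chi(h^{-1})\, y_\chi$, so each $y_\chi$ is a common eigenvector. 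By Dedekind's independence of characters the $n$ functions $\chi \colon G \to k$ are $k$-linearly independent, and as there are exactly $n = \dim_k \bigoplus_g k x_g$ of them, the $\{y_\chi\}$ form a new $k$-basis. Hence $k(x_g : g \in G) = k(y_\chi : \chi \in \hat G)$ and on the right $G$ acts diagonally by roots of unity.

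Next I would recast this diagonal action in lattice terms. Let $L = \bigoplus_{\chi \in \hat G} \mathbb{Z}\, e_\chi \cong \mathbb{Z}^n$ be the lattice of Laurent monomials in the $y_\chi$, so that $k(y_\chi : \chi) = \operatorname{Frac} k[L]$ with $k[L] = k[y_\chi^{\pm 1}]$. A monomial $y^a = \prod_\chi y_\chi^{a_\chi}$ is scaled by $h$ through the value at $h$ of the character $\lambda(a) := \prod_\chi \chi^{-a_\chi} \in \hat G$. The map $\lambda \colon L \to \hat G$ is a surjective homomorphism (the $\chi$ generate $\hat G$), and its kernel $M = \ker \lambda$ is precisely the lattice of $G$-invariant monomials. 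Being a subgroup of index $|\hat G| = n$ in $L$, the lattice $M$ is free of rank $n$. The group algebra carries the resulting $\hat G$-grading $k[L] = \bigoplus_{\psi} k[L]_\psi$, on whose $\psi$-component $h$ acts by the scalar $\psi(h)$, and its degree-zero part is exactly $R_0 = k[M]$.

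The crux — and the step I expect to require the most care — is the identification $k(G) = k(M) := \operatorname{Frac} k[M]$, since a priori the fixed field could contain invariant rational functions that are not themselves monomials. I would settle this by a degree count. Choosing for each $\psi \in \hat G$ a monomial $y^{a_\psi}$ with $\lambda(a_\psi) = \psi$ exhibits $k[L]$ as a free $k[M]$-module of rank $n$; inverting the nonzero elements of $k[M]$ then yields a finite-dimensional domain over $k(M)$, necessarily a field, so $[\operatorname{Frac} k[L] : k(M)] = n$. On the other hand $G$ acts faithfully on $\operatorname{Frac} k[L]$ — distinct characters separate the points of $G$, so a nontrivial $h$ moves some $y_\chi$ — while fixing $k(M)$ pointwise, whence Artin's theorem gives $[\operatorname{Frac} k[L] : k(G)] = |G| = n$. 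Comparing the two counts forces $k(G) = k(M)$.

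Finally, because $M \cong \mathbb{Z}^n$ is free, picking a $\mathbb{Z}$-basis $m_1, \dots, m_n$ produces algebraically independent monomials $u_i = y^{m_i}$ with $k[M] = k[u_1^{\pm 1}, \dots, u_n^{\pm 1}]$, so that $k(G) = k(u_1, \dots, u_n)$ is $k$-rational of transcendence degree $n = |G|$, as required. The single essential use of the hypothesis $\zeta_e \in k$ is at the diagonalisation step, where it guarantees that all $n$ characters of $G$ are defined over $k$; without enough roots of unity the representation need not split and the reduction to the lattice problem breaks down.
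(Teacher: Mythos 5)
Your proof is correct; note that the paper does not actually prove this statement but quotes it as a classical theorem of Fischer \cite{Fi}, and your argument is essentially the classical one as presented in \cite[Theorem 6.1]{Sw2}: diagonalize the regular representation via the characters (which requires exactly the hypothesis $\zeta_e\in k$), then reduce to multiplicative invariants, where the invariant monomials form a lattice of full rank $|G|$ whose fraction field is therefore rational. Your justification of the key identification $k(G)=\operatorname{Frac}k[M]$ by a degree count (freeness of $k[L]$ over $k[M]$ of rank $n$, compared with Artin's theorem giving $[\operatorname{Frac}k[L]:k(G)]=n$) is a clean variant of the usual denominator-clearing argument, but the overall route is the same.
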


\begin{theorem}[Kuniyoshi, Gasch\"utz \cite{Ku1}, \cite{Ku2}, \cite{Ku3}, \cite{Ga}] \label{t2.1}
Let $k$ be a field with $\fn{char}k=p>0$, $G$ be a finite
$p$-group. Then $k(G)$ is $k$-rational.
\end{theorem}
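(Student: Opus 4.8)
The plan is to exploit the two features that separate this situation from Noether's problem in characteristic $0$: the group algebra $kG$ is local, and consequently the regular representation is unipotent. First I would record that the given action is nothing but the left regular representation: the $k$-vector space $V=\bigoplus_{g\in G}k\,x_g$ with $h\cdot x_g=x_{hg}$ is isomorphic to $kG$ as a $kG$-module, so $k(G)=k(V)^G$. Because $\fn{char}k=p$ and $G$ is a $p$-group, $kG$ is a local ring whose augmentation ideal $I=\ker(kG\to k)$ is nilpotent, say $I^N=0$. Filtering $V=kG\supset I\supset I^2\supset\cdots\supset I^N=0$ and using $(g-1)I^j\subseteq I^{j+1}$ for every $g\in G$, one sees that $G$ acts trivially on each successive quotient $I^j/I^{j+1}$.

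Choosing a $k$-basis $y_1,\dots,y_n$ (with $n=|G|$) adapted to this filtration then makes the action unipotent upper-triangular: $g\cdot y_i=y_i+\sum_{j<i}c_{ij}(g)\,y_j$ with $c_{ij}(g)\in k$, and in particular $y_1$ is $G$-fixed. Since $k(x_g:g\in G)=k(y_1,\dots,y_n)$, it remains to prove the assertion that carries the real content: if a finite group acts on $k(y_1,\dots,y_n)$ by such a constant-coefficient unipotent triangular action in characteristic $p$, then the fixed field is $k$-rational. A convenient first reduction is to pass to $z_i=y_i/y_1$ for $2\le i\le n$; then $y_1$ becomes a free $G$-invariant variable that splits off, so $k(y_1,\dots,y_n)^G=k(z_2,\dots,z_n)^G(y_1)$, and $G$ now acts on $k(z_2,\dots,z_n)$ by \emph{affine} triangular transformations with coefficients in $k$.

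I would then establish the affine assertion by induction on the number of variables, peeling off the top variable at each stage. Writing $L_{i-1}=k(z_2,\dots,z_{i-1})$, the top variable satisfies $g\cdot z_i=z_i+b_i(g)$ with $b_i(g)\in L_{i-1}$, so $g\mapsto b_i(g)$ is a $1$-cocycle of $G$ with values in the additive group $(L_{i-1},+)$. The characteristic-$p$ engine is that a finite additive subgroup of a field of characteristic $p$ is an elementary abelian $p$-group, so the relevant translation data is trivialized by a \emph{linearized (additive) polynomial} $\psi_T(z_i)=\prod_{t\in T}(z_i-t)$, which is $T$-invariant and generates the fixed field of the translation action; equivalently one invokes the additive Hilbert 90, $H^1(G,L_{i-1})=0$, to write $b_i(g)=(g-1)\mu$ and replace $z_i$ by the invariant $z_i-\mu$. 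Either way one gets $k(z_2,\dots,z_i)^G=L_{i-1}^G(w_i)$ with $w_i$ transcendental over $L_{i-1}^G$, and the induction closes, exhibiting $k(z_2,\dots,z_n)^G$, and hence $k(G)$, as purely transcendental over $k$.

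The step I expect to be the main obstacle is exactly this inductive descent, because $G$ need not act faithfully on the intermediate field $L_{i-1}$, so neither the additive Hilbert 90 nor the clean ``factor out a linearized polynomial'' statement applies verbatim. The honest version requires factoring through $N_{i-1}=\ker\big(G\to\fn{Aut}(L_{i-1})\big)$, whose elements still translate $z_i$, and checking that the resulting translation set forms a well-defined finite, $G$-stable, elementary abelian $p$-subgroup $T\subset L_{i-1}$ so that $\psi_T$ is defined over $L_{i-1}^G$. Organizing this bookkeeping so that every peeling step is genuinely a one-variable purely transcendental extension is the heart of the Kuniyoshi--Gasch\"utz argument; it is also precisely where characteristic $p$ is indispensable, as the failure of the analogous statement in characteristic $0$ — witnessed in this very paper by the family $\Phi_{10}$ — makes clear.
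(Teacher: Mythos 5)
The paper gives no proof of Theorem \ref{t2.1}: it is quoted as a classical result of Kuniyoshi and Gasch\"utz, so there is nothing in the text to compare your argument against. Evaluated on its own, your proposal is the standard classical argument and it is correct. The reduction to a unipotent triangular action with constant coefficients (via the nilpotency of the augmentation ideal of the local ring $kG$) is exactly right, and the inductive peeling closes along the lines you sketch: on $N=\ker\bigl(G\to \fn{Aut}(L_{i-1})\bigr)$ the cocycle $b_i$ restricts to a homomorphism $N\to (L_{i-1},+)$ because $N$ fixes $L_{i-1}$ pointwise; its image $T$ is a finite, hence elementary abelian, additive subgroup; the cocycle identity gives $b_i(gng^{-1})=g\cdot b_i(n)$, so $T$ is $G$-stable and $\psi_T=\prod_{t\in T}(X-t)$ has coefficients in $L_{i-1}^G$; then $w=\psi_T(z_i)$ generates the $N$-invariants, the induced translation cocycle of $G/N$ on $w$ is killed by additive Hilbert 90 for the genuine Galois extension $L_{i-1}/L_{i-1}^G$ (normal basis theorem), and an invariant translate of $w$ gives $L_i^G=L_{i-1}^G(w')$ with $w'$ transcendental. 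So the obstacle you flag is real, but the fix you outline does work, and the verifications are routine.

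Two remarks. First, the entire faithfulness discussion can be bypassed by the paper's own Theorem \ref{t2.4} (the lemma of \cite{AHK}): it assumes only $\sigma(L)\subset L$ and $\sigma(x)=a_\sigma x+b_\sigma$ --- no faithfulness hypothesis --- and yields $L_i^G=L_{i-1}^G(f)$ in one stroke, with $f$ transcendental over $L_{i-1}^G$ by comparing transcendence degrees; this is the cleanest way to organize the induction. Second, your closing claim that the descent step is ``precisely where characteristic $p$ is indispensable'' misplaces the role of the hypothesis: the affine peeling lemma is characteristic-free, and in characteristic $0$ the translation kernels you worry about are automatically trivial (a finite additive subgroup of a field of characteristic $0$ is zero). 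What fails in characteristic $0$ --- and is witnessed by the family $\Phi_{10}$ --- is the very first step: the regular representation is then semisimple with nontrivial irreducible constituents, so no unipotent triangular basis exists. Characteristic $p$ enters only through the locality of $kG$, i.e.\ the fact that the trivial module is the unique simple $kG$-module.
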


Noether's problem is related to the inverse Galois problem,
to the existence of generic $G$-Galois extensions over $k$, and
to the existence of versal $G$-torsors over $k$-rational field extensions
\cite{Sw2}, \cite{Sa1}, \cite[Section 33.1, page 86]{GMS}.

The first counter-example to Noether's problem was constructed by
Swan \cite{Sw1}: $\bm{Q}(C_p)$ is not $\bm{Q}$-rational if $p=47$,
$113$ or $233$ etc.\ where $C_p$ is the cyclic group of order $p$.
Noether's problem for finite abelian groups was studied
extensively by Swan, Voskresenskii, Endo and Miyata, Lenstra, etc.
For details, see Swan's survey paper \cite{Sw2}.

On the other hand, the results of Noether's problem for non-abelian
groups are rather scarce. First of all, recall a notion of retract
$k$-rationality introduced by Saltman (see \cite{Sa3} or
\cite{Ka3}). It is known from the definition of retract
$k$-rationality that, if $k$ is an infinite field, then ``stably
$k$-rational" $\Rightarrow$ ``retract $k$-rational" $\Rightarrow$
``$k$-unirational". It follows that, if $k(G)$ is not retract
$k$-rational, then it is not $k$-rational.

In \cite{Sa2}, Saltman defines $\fn{Br}_{v,k}(k(G))$, the unramified
Brauer group of $k(G)$ over $k$. It is known that, if $k(G)$ is
retract $k$-rational, then the natural map $Br (k) \rightarrow
\fn{Br}_{v,k}(k(G))$ is an isomorphism; in particular, if $k$ is
algebraically closed, then $\fn{Br}_{v,k}(k(G))$ $=$ $0$. Thus the crucial
point in \cite{Sa2} is to construct a $p$-group $G$ with
$\fn{Br}_{v,k}(k(G))\ne 0$.

\begin{theorem}[Saltman \cite{Sa2}] \label{t1.1}
Let $p$ be any prime number, $k$ be any infinite field with
$\fn{char}k\ne p$.
Then there exists a meta-abelian $p$-group $G$ of order $p^9$
such that $k(G)$ is not retract $k$-rational. It follows that
$k(G)$ {\rm{(}}in particular, $\bm{C}(G)${\rm{)}} is not $k$-rational.
\end{theorem}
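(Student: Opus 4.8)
The plan is to argue by contraposition through the unramified Brauer group. By the facts recalled just above, if $k(G)$ were retract $k$-rational then the natural map $\fn{Br}(k)\to\fn{Br}_{v,k}(k(G))$ would be an isomorphism; in particular, over an algebraically closed field such as $\bm{C}$ one would have $\fn{Br}_{v,\bm{C}}(\bm{C}(G))=0$. Hence it suffices to exhibit a meta-abelian $p$-group $G$ of order $p^9$ for which $\fn{Br}_{v,k}(k(G))$ is strictly larger than the image of $\fn{Br}(k)$ (and nonzero when $k=\bm{C}$). The failure of retract $k$-rationality, and a fortiori of $k$-rationality, then follows formally from the chain ``stably rational $\Rightarrow$ retract rational $\Rightarrow$ unirational'' recalled in the introduction.

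First I would transport the computation of the relevant part of $\fn{Br}_{v,k}(k(G))$ into group cohomology. The key input is Saltman's identification (later sharpened by Bogomolov): a class in $H^2(G,\mu)$ yields an unramified class beyond $\fn{Br}(k)$ precisely when its restriction to every abelian subgroup of $G$ is trivial. Over $\bm{C}$ the resulting group is exactly
\[
B_0(G)=\bigcap_{A\le G \text{ abelian}}\ker\bigl(\fn{res}\colon H^2(G,\bm{Q}/\bm{Z})\to H^2(A,\bm{Q}/\bm{Z})\bigr),
\]
so the whole problem reduces to producing a finite $p$-group with $B_0(G)\ne 0$. I would then build $G$ explicitly as a central extension $1\to Z\to G\to V\to 1$ with $V\cong(\bm{Z}/p)^6$ and $Z\cong(\bm{Z}/p)^3$, so that $|G|=p^9$ and $[G,G]\subseteq Z$ is central; thus $G$ has nilpotency class $2$, hence is meta-abelian. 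Such an extension is encoded by its commutator pairing, an alternating bilinear map $\beta\colon\wedge^2 V\to Z$, which I would choose surjective and suitably generic.

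The abelian subgroups of $G$ are then governed by the $\beta$-isotropic subspaces of $V$ (those $W$ with $\beta|_{\wedge^2 W}=0$), and Bogomolov's criterion identifies $B_0(G)$ with the quotient
\[
\ker\bigl(\beta\colon\wedge^2 V\to Z\bigr)\Big/\Bigl\langle\, v\wedge w : v,w\in V,\ \beta(v\wedge w)=0 \,\Bigr\rangle,
\]
namely the kernel of $\beta$ modulo the subgroup generated by those decomposable bivectors on which $\beta$ vanishes. The task becomes the concrete one of selecting $\beta$ so that this quotient is nonzero: here $\dim\wedge^2 V=15$ and $\dim Z=3$, so $\ker\beta$ has dimension $12$, and I must arrange that the isotropic decomposable elements fail to span this $12$-dimensional space.

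The hard part will be precisely this last step: exhibiting an alternating form $\beta$ valued in a $3$-dimensional space whose kernel is \emph{not} generated by its isotropic decomposables, and verifying the non-generation rigorously. This is a delicate but finite piece of linear algebra --- one fixes an explicit basis, writes $\beta$ as a triple of alternating forms, and shows that some specific element of $\ker\beta$ cannot be expressed as a sum of products $v\wedge w$ with $v,w$ commuting in $G$. Once such a $\beta$ is in hand, $B_0(G)\ne 0$ gives $\fn{Br}_{v,\bm{C}}(\bm{C}(G))\ne 0$ and hence the non-rationality of $\bm{C}(G)$. For a general infinite field $k$ with $\fn{char}k\ne p$ I would rerun the same cohomological computation with $\mu_{p^\infty}$-coefficients, checking that the decomposable relations and the interaction with $\fn{Br}(k)$ behave as over $\bm{C}$, so that $\fn{Br}(k)\to\fn{Br}_{v,k}(k(G))$ is not surjective and $k(G)$ is not retract $k$-rational. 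Some additional care is needed for $p=2$, where the symmetric-versus-alternating distinction in the commutator pairing alters the bookkeeping.
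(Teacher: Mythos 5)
You should first be aware that the paper contains no proof of this statement: Theorem \ref{t1.1} is background, quoted from Saltman \cite{Sa2}, and the introduction records only the mechanism you also start from --- if $k(G)$ were retract $k$-rational, the map $\fn{Br}(k)\to\fn{Br}_{v,k}(k(G))$ would be an isomorphism, so it suffices to produce a meta-abelian group of order $p^9$ whose unramified Brauer group is too big. Your further reduction, via the Bogomolov-type description of $B_0(G)$ for a class-$2$ central extension $1\to Z\to G\to V\to 1$ in terms of the commutator pairing $\beta\colon\wedge^2V\to Z$, to the linear-algebra statement that $\ker\beta$ should not be spanned by its isotropic decomposables, is a legitimate modern route over $\bm{C}$; but note it is anachronistic as a reconstruction of \cite{Sa2} (Bogomolov's equality came four years later), and for a general infinite $k$ with $\fn{char}k\ne p$ the step you describe as ``rerunning the same computation'' is precisely the nontrivial part of Saltman's paper that manufactures unramified classes over such $k$ from group cohomology --- it is not a formality.

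The genuine gap is that you never produce the group. The entire content of the theorem is existence, and your proposal stops exactly where the proof must begin: the ``hard part'' you defer --- exhibiting $\beta$ and verifying non-generation rigorously --- \emph{is} the theorem. Worse, the one concrete guideline you offer, to take $\beta$ ``surjective and suitably generic,'' points in the wrong direction. The decomposable elements of $\wedge^2V$ form the affine cone over the Pl\"ucker-embedded Grassmannian $\fn{Gr}(2,V)$, an irreducible variety (of dimension $9$ when $\dim V=6$) whose linear span is all of $\wedge^2V$; by an iterated Bertini argument, a \emph{generic} codimension-$3$ linear section of it remains irreducible and still spans the $12$-dimensional subspace in which it lives. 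Hence for generic $\beta$ the isotropic decomposables span $\ker\beta$, so $B_0(G)=0$ and your group has trivial unramified Brauer group. Nontriviality of $B_0$ is a degenerate phenomenon that must be engineered with special relations --- which is exactly why Saltman's group of order $p^9$ and Bogomolov's of order $p^6$ (Theorem \ref{t1.2}) are particular constructions whose verification occupies those papers, and why this paper's own hard content (Sections 4--6) consists of case-by-case work rather than genericity arguments. A smaller point: the identification of $B_0(G)$ with $(\ker\beta/\langle\text{isotropic decomposables}\rangle)^{\ast}$ requires $G$ to have exponent $p$ with $p$ odd, so your central extension must also be chosen with trivial power map. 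As it stands, you have a correct reduction plus an unproved existence claim, which does not prove the theorem.
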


Bogomolov gives a formula (\cite[Theorem 3.1]{Bo})
for computing the unramified Brauer
group and he is able to improve the bound of the group order to
$p^6$.

\begin{theorem}[{Bogomolov \cite[Lemma 5.6]{Bo}}] \label{t1.2}
There exists a $p$-group $G$ of order $p^6$ such that $\fn{Br}_{v,{\bm
C}}(\bm C(G))$ $\ne 0$. It follows that $\bm{C}(G)$ is not retract
$\bm{C}$-rational; thus it is not $\bm{C}$-rational.
\end{theorem}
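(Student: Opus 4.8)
The plan is to reduce the assertion to a purely group-theoretic computation by means of Bogomolov's cohomological formula. By \cite[Theorem 3.1]{Bo}, cited above, over an algebraically closed field such as $\bm{C}$ one has
\[
\fn{Br}_{v,\bm{C}}(\bm{C}(G)) \cong B_0(G) := \bigcap_{A} \ker\bigl(\fn{res}^G_A : H^2(G,\bm{Q}/\bm{Z}) \to H^2(A,\bm{Q}/\bm{Z})\bigr),
\]
the intersection being taken over all bicyclic subgroups $A \le G$ (subgroups generated by two commuting elements). Hence it suffices to exhibit a single $p$-group $G$ of order $p^6$ possessing a non-zero class in $H^2(G,\bm{Q}/\bm{Z})$ that restricts to $0$ on every such $A$.

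I would search for $G$ among the $p$-groups of nilpotency class $2$ and exponent $p$ ($p$ odd), where $B_0(G)$ admits a transparent linear-algebra description. Writing $V = G/[G,G]$ and $W = [G,G]$ as $\bm{F}_p$-vector spaces, the commutator induces a surjective alternating map $\lambda : \wedge^2 V \to W$, and under the Hopf-formula identification of the Schur multiplier one obtains
\[
B_0(G)^{\vee} \cong \ker(\lambda)\,\big/\,\langle\, v\wedge w : v,w\in V,\ \lambda(v\wedge w)=0\,\rangle ,
\]
where the denominator is the span of the decomposable bivectors lying in $\ker(\lambda)$; these correspond precisely to commuting pairs, hence to bicyclic subgroups. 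Choosing $\dim_{\bm{F}_p}V = 4$ and $\dim_{\bm{F}_p}W = 2$ yields $|G| = p^{4}\cdot p^{2} = p^{6}$, the required order, with $\dim\ker(\lambda) = \binom{4}{2}-2 = 4$. The whole problem thus becomes: find $\lambda$ for which $\ker(\lambda)$ is strictly larger than the subspace spanned by the decomposable bivectors it contains.

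To construct such a $\lambda$ I would fix a basis $e_1,\dots,e_4$ of $V$ and prescribe the brackets $[e_i,e_j]$ — equivalently a two-dimensional space of alternating forms on $V$ — so that the $4$-dimensional kernel $\ker(\lambda)$ meets the quadric cone of decomposables $\{\omega\in\wedge^2 V : \omega\wedge\omega=0\}$ only in a proper subspace. This is the delicate point and the main obstacle: for a \emph{generic} $4$-plane the decomposable bivectors it contains already span it (so that $B_0(G)=0$), and one must therefore place $\ker(\lambda)$ in very special position, verifying the resulting indecomposability of some class such as the image of $e_1\wedge e_2 + e_3\wedge e_4$. This amounts to a rank analysis of the restriction to $\ker(\lambda)$ of the quadratic form $\omega\mapsto\omega\wedge\omega$, carried out over $\bm{F}_p$ for every prime $p$, with the characteristic-$2$ case demanding separate treatment because the passage between exponent-$p$ class-$2$ groups and $\wedge^2 V$, as well as the theory of quadratic forms, behaves differently there. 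Keeping the order at the minimal value $p^{6}$ while securing non-vanishing is exactly what makes the example hard to find.

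Once $B_0(G)\ne 0$ is verified for this $G$, Bogomolov's formula gives $\fn{Br}_{v,\bm{C}}(\bm{C}(G))\ne 0$. By the fact recalled in the introduction — for algebraically closed $k$, retract $k$-rationality of $k(G)$ forces the natural map $\fn{Br}(k)\to\fn{Br}_{v,k}(k(G))$ to be an isomorphism, whence $\fn{Br}_{v,k}(k(G))=0$ — the field $\bm{C}(G)$ cannot be retract $\bm{C}$-rational, and \emph{a fortiori} is not $\bm{C}$-rational, which is the claim.
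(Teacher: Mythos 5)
The paper does not prove Theorem \ref{t1.2} at all: it is quoted as background from Bogomolov \cite[Lemma 5.6]{Bo}. So your proposal must be judged on whether, if completed, it would establish the existence statement. It would not, and the problem is not merely that you stop at what you call ``the delicate point and the main obstacle'' without ever exhibiting a group or verifying non-vanishing (which, for an existence theorem, is the entire content). The deeper problem is that the deferred step is \emph{impossible}: within the framework you fix, the statement you aim to prove is false, so no choice of $\lambda$, however special, can work. Fix $p$ odd (for $p=2$ your framework is empty, since exponent-$2$ groups are abelian), and grant your own formula $B_0(G)^{\vee}\cong\ker(\lambda)/\langle\text{decomposables in }\ker\lambda\rangle$ for class-$2$, exponent-$p$ groups. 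Decomposability in $\wedge^2V$ with $\dim V=4$ is the vanishing of the Pl\"ucker form $Q(\omega)=\omega\wedge\omega$, a nondegenerate form of Witt index $3$ on the $6$-dimensional space $\wedge^2V$. Let $K=\ker\lambda$ with $\dim K=4$ and let $R=K\cap K^{\perp}$, so $\dim R\le\dim K^{\perp}=2$. Either $Q|_K$ has an isotropic vector outside $R$: then $K=H\perp K'$ with $H$ a hyperbolic plane, and the zeros of $Q|_K$ span $K$ (the two isotropic lines span $H$, and for each $x\in K'$ there is $h\in H$ with $Q(h)=-Q(x)$ because $Q|_H$ is universal, so $h+x$ is a zero and $x=(h+x)-h$ lies in the span). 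Or else $Q$ induces an anisotropic form on $K/R$: then $\dim K/R\le 2$ since anisotropic forms over $\bm{F}_p$ have dimension at most $2$, forcing $\dim R=2$ and $K=R^{\perp}$; but for $R$ totally isotropic, $R^{\perp}/R$ is Witt-equivalent to the ambient split form, i.e.\ is a hyperbolic plane, contradicting anisotropy. Hence \emph{every} $4$-dimensional subspace of $\wedge^2V$ is spanned by the decomposables it contains, and every group in your search space has $B_0(G)=0$. Your remark that the ``generic'' $4$-plane fails is in fact the universal situation over $\bm{F}_p$.

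Consequently the minimal order your method can reach is $p^{7}$, not $p^{6}$ (take $\dim W=3$ and $\ker\lambda$ the orthogonal sum of an isotropic line and an anisotropic plane: then the decomposables in $\ker\lambda$ are exactly that line, and $B_0(G)\ne 0$). A correct proof of Theorem \ref{t1.2} must therefore leave the class-$2$, exponent-$p$ setting, where $G/[G,G]$ is forced to be elementary abelian: for odd $p$ one may take $G=H\times C_p$ with $H$ any group of order $p^5$ in the family $\Phi_{10}$, using Theorems \ref{t1.7} and \ref{t1.9} together with the fact that $B_0$ is an isoclinism invariant and $H\times C_p$ is isoclinic to $H$ (see \cite{Mo}, \cite{HKKu}); for $p=2$ one may invoke the nine groups of order $64$ of \cite{CHKK}. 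Given that the neighbouring claim \cite[Lemma 4.11]{Bo}, reproduced as Proposition \ref{p1.6} above, turned out to be false, this is precisely the kind of statement where an existence claim cannot be left as a plausible-looking search problem.
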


In \cite[Remark 1]{Bo}, Bogomolov proposes to classify all the $p$-groups $G$
of order $p^6$ with $\fn{Br}_{v,\bm{C}} (\bm{C}(G))\ne 0$.
This is done in \cite[Theorem 1.8]{CHKK} for $p=2$;
in fact, it is shown that there are precisely nine groups $G$ of
order 64, $G(64,i)$ where $i=149$, 150, 151, 170, 171, 172, 177,
178, 182 with $\fn{Br}_{v,\bm{C}}(\bm{C}(G))\ne 0$ (the notation
$G(64,i)$ denotes the $i$-th group of order 64 in the database of
GAP \cite{GAP}). Moreover, it is known that, if $G$ is a group of
order 64 with $\fn{Br}_{v,\bm{C}}(\bm{C}(G))=0$, then $\bm{C}(G)$
is $\bm{C}$-rational except possibly for five unsettled groups
$G(64,i)$ with $241\leq i\leq 245$ \cite[Theorem 1.10]{CHKK}.

The notion of the unramified Brauer group is generalized to the
unramified cohomology of degree $q$,
$H_{v,\bm{C}}^q(\bm{C}(G),\bm{Q}/\bm{Z})$ where $q\ge 2$ by
Colliot-Th\'el\`ene and Ojanguren \cite{CTO}. It is also known
that, if $\bm{C}(G)$ is retract $\bm{C}$-rational, then
$H_{v,\bm{C}}^q(\bm{C}(G),\bm{Q}/\bm{Z})=0$ for all $q \ge 2$.
Using the unramified cohomology of degree 3,
$H_{v,\bm{C}}^3(\bm{C}(G),\bm{Q}/\bm{Z})$, Peyre is able to prove
the following theorem.

\begin{theorem}[{Peyre \cite[Theorem 3]{Pe2}}] \label{t1.3}
Let $p$ be any odd prime number.
There exists a $p$-group $G$ of order $p^{12}$ such that
$\fn{Br}_{v,\bm{C}}(\bm{C}(G))=0$ and
$H_{v,\bm{C}}^3(\bm{C}(G),\bm{Q}/\bm{Z})\ne 0$.
In particular, $\bm{C}(G)$ is not stably $\bm{C}$-rational.
\end{theorem}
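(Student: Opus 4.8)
The plan is to realize $\bm{C}(G)$ as $\bm{C}(V)^G$ for a faithful linear representation $V$ of $G$ (by the no-name lemma this is stably isomorphic to the field $\bm{C}(G)$ of the statement, and unramified cohomology is a stable birational invariant) and then to translate the two conditions into purely group-cohomological statements, following the method of Bogomolov in degree $2$ and extending it to degree $3$. The basic input is that $H_{v,\bm{C}}^q(\bm{C}(G),\bm{Q}/\bm{Z})$ may be computed on a smooth projective model, where the residue (ramification) maps at divisorial valuations are governed by the inertia groups of the $G$-action; for a linear action these inertia groups are abelian. First I would record the degree-$2$ comparison, namely Bogomolov's identification of $\fn{Br}_{v,\bm{C}}(\bm{C}(G))$ with $B_0(G)$, the subgroup of $H^2(G,\bm{Q}/\bm{Z})$ consisting of those classes whose restriction to every abelian subgroup of $G$ vanishes. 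Thus the condition $\fn{Br}_{v,\bm{C}}(\bm{C}(G))=0$ becomes the group-theoretic requirement $B_0(G)=0$.

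Next I would set up the analogous detection mechanism in degree $3$. The goal is to exhibit a subquotient of $H^3(G,\bm{Q}/\bm{Z})$ that injects into $H_{v,\bm{C}}^3(\bm{C}(G),\bm{Q}/\bm{Z})$: concretely, one considers the classes in $H^3(G,\bm{Q}/\bm{Z})$ whose residues along all divisorial inertia subgroups vanish, taken modulo the decomposable classes arising as cup products of lower-degree (already unramified) cohomology. Computing these residues reduces, via the Gysin/residue exact sequence, to understanding the restriction maps $H^3(G,\bm{Q}/\bm{Z})\to H^3(A,\bm{Q}/\bm{Z})$ together with the associated contractions, where $A$ ranges over the abelian inertia subgroups. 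This produces a lower bound for $H_{v,\bm{C}}^3(\bm{C}(G),\bm{Q}/\bm{Z})$ expressed entirely in terms of the cohomology of $G$ and its abelian subgroups.

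With this machinery in place, the heart of the argument is the explicit construction of a $p$-group $G$ of order $p^{12}$ (for $p$ odd, so that $G$ can be taken of exponent $p$ and nilpotency class $\le 3$ with the commutator and power structure behaving as required) for which simultaneously $B_0(G)=0$ — killing the Brauer obstruction — and the degree-$3$ subquotient above is nonzero. I would build $G$ as a nilpotent group whose commutator structure is engineered so that every degree-$2$ cohomology class is detected by restriction to an abelian subgroup, forcing $B_0(G)=0$, while a suitable degree-$3$ class (a triple cup product / Massey-type class) has all its residues vanishing yet is not decomposable, hence survives to a nonzero element of $H_{v,\bm{C}}^3(\bm{C}(G),\bm{Q}/\bm{Z})$. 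The dimension count needed to fit an indecomposable unramified degree-$3$ class while simultaneously collapsing $B_0(G)$ is exactly what pins the order at $p^{12}$.

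The main obstacle is this last construction together with its verification: one must write down an explicit group and then carry out two delicate cohomological computations — proving $B_0(G)=0$ (a check over a spanning family of bicyclic subgroups) and proving that the candidate degree-$3$ class is genuinely unramified and nonzero modulo decomposables. Both computations would be organized using the Lyndon--Hochschild--Serre spectral sequence and the K\"unneth formula, with careful control of the restriction maps to abelian subgroups. Once $H_{v,\bm{C}}^3(\bm{C}(G),\bm{Q}/\bm{Z})\ne 0$ is established, the conclusion is immediate: as recalled above, retract (in particular stable) $\bm{C}$-rationality of $\bm{C}(G)$ would force $H_{v,\bm{C}}^q(\bm{C}(G),\bm{Q}/\bm{Z})=0$ for all $q\ge 2$, so the nonvanishing in degree $3$ shows $\bm{C}(G)$ is not stably $\bm{C}$-rational, while $\fn{Br}_{v,\bm{C}}(\bm{C}(G))=0$ certifies that the classical degree-$2$ obstruction alone cannot detect this.
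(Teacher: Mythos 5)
This statement is not proved in the paper at all: it is quoted verbatim, with attribution, as Peyre's theorem \cite[Theorem 3]{Pe2}, and serves only as background in the introduction. So the only meaningful comparison is against Peyre's own proof, whose broad architecture your proposal does reconstruct correctly: pass to $\bm{C}(V)^G$ for a faithful representation $V$ (no-name lemma, stable birational invariance of unramified cohomology), use Bogomolov's identification of $\fn{Br}_{v,\bm{C}}(\bm{C}(G))$ with $B_0(G)$, set up a group-cohomological lower bound for $H^3_{v,\bm{C}}$, and then exhibit a specific group of order $p^{12}$ realizing $B_0(G)=0$ together with a nonzero unramified degree-$3$ class.

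However, as a proof your text has genuine gaps, and they sit exactly where the real work lies. First, the degree-$3$ detection mechanism is asserted, not established. Unlike degree $2$, where ``residues vanish'' is equivalent to ``restrictions to (bi)cyclic subgroups vanish,'' in degree $3$ the passage from restrictions-to-inertia-subgroups to genuine unramified classes on the function field, and the control of decomposable classes (which requires the Merkurjev--Suslin theorem to compare cup products in Galois cohomology with the group-theoretic ones), is the central technical theorem of Peyre's paper; your sentence ``this produces a lower bound for $H^3_{v,\bm{C}}$'' compresses what is, in Peyre's account, the bulk of the argument, including a careful analysis via exterior-algebra/Koszul-type computations for central extensions of exponent $p$ and class $2$. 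Second, no group is actually constructed: the statement requires an explicit $G$ of order $p^{12}$, and both verifications --- $B_0(G)=0$ over a spanning family of bicyclic subgroups, and the nonvanishing of the candidate degree-$3$ class modulo decomposables --- are named as ``the main obstacle'' but never carried out. Third, the claim that ``the dimension count \dots pins the order at $p^{12}$'' has no supporting argument; $p^{12}$ is not forced by any count you give, it is simply the size of the example Peyre builds. In short, the proposal is a correct road map of Peyre's strategy, but everything that makes the theorem true --- the detection theorem in degree $3$ and the explicit example with its two computations --- is missing.
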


The triviality of the unramified Brauer group or the unramified
cohomology of higher degree is just a necessary condition of
$\bm{C}$-rationality of fields. It is unknown whether the vanishing of
all the unramified cohomologies is a sufficient condition for
$\bm{C}$-rationality.
Asok \cite{As} generalized Peyre's argument \cite{Pe1} and
established the following theorem
for a smooth projective variety $X$:

\begin{theorem}[{Asok \cite[Theorem 1, Theorem 3]{As}}]
{\rm (1)} For any $n\ge 1$, there exists a smooth projective
complex variety $X$ that is $\bm{C}$-unirational, for which
$H_{v,\bm{C}}^i(X,\mu_2^{\otimes i})=0$ for each $i<n$, yet
$H_{v,\bm{C}}^n(X,\mu_2^{\otimes n})\neq 0$, and so $X$ is not
$\mathbb{A}^1$-connected
$($nor stably $\bm{C}$-rational$)$;\\
{\rm (2)} For any prime number $l$ and any $n\geq 2$,
there exists a smooth projective rationally connected complex
variety $X$ such that
$H_{v,\bm{C}}^n(X,\mu_l^{\otimes n})\neq 0$.
In particular, $X$ is not $\mathbb{A}^1$-connected
$($nor stably $\bm{C}$-rational$)$.
\end{theorem}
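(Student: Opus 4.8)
The plan is to split the statement into a formal ``obstruction'' step and an explicit construction. For the obstruction step, observe that both conclusions --- that $X$ is not $\mathbb{A}^1$-connected and not stably $\bm C$-rational --- follow at once from the nonvanishing of a single unramified group. Indeed, for a smooth proper complex variety $X$, Bloch--Ogus--Gabber theory identifies $H^n_{v,\bm C}(X,\mu_l^{\otimes n})$ with the global sections $H^0_{\mathrm{Zar}}(X,\mathcal H^n)$, where $\mathcal H^n$ is the Zariski sheafification of the presheaf sending an open $U$ to its \'etale cohomology $H^n_{\mathrm{et}}(U,\mu_l^{\otimes n})$. In the framework of Morel and Asok--Morel these sheaves $\mathcal H^n$ are strictly $\mathbb{A}^1$-invariant, so their sections over a smooth proper variety are an $\mathbb{A}^1$-homotopy invariant; if $X$ were $\mathbb{A}^1$-connected, the structure map $X\to\fn{Spec}\bm C$ would induce an isomorphism on these invariants, and the target is $0$ for $n\ge 1$. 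Because a stably $\bm C$-rational smooth proper variety is $\mathbb{A}^1$-connected, nonvanishing of $H^n_{v,\bm C}(X,\mu_l^{\otimes n})$ rules out both properties simultaneously. Everything therefore reduces to producing varieties with the prescribed nonvanishing.

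For the construction I would follow Peyre's method (Theorem~\ref{t1.3}) and extend it to arbitrary degree and coefficients. Starting from a finite group $G$ with a faithful $\bm C$-representation $V$, the quotient $V/G$ is unirational --- hence rationally connected --- with function field $\bm C(V)^G=\bm C(G)$, and its unramified cohomology $H^n_{v,\bm C}(\bm C(G),\mu_l^{\otimes n})$ is computable from the cohomology of $G$ by imposing the residue conditions (restriction to abelian subgroups) of Bogomolov in degree $2$ and of Peyre in degree $3$, suitably extended to higher degrees. The idea is to choose $G$ to be a $p$-group encoding a length-$n$ symbol: a generalized Heisenberg-type central extension whose mod-$l$ cohomology ring carries a decomposable class $\alpha\in H^n(G,\mu_l^{\otimes n})$, and then to check from the residue computation that $\alpha$ is unramified and nonzero. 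The order-$p^{12}$ group of Theorem~\ref{t1.3} is precisely the case $n=3$, $l=p$; iterating or tensoring such building blocks should realize every $n\ge 2$ and every prime $l$, which yields part~(2).

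The genuinely difficult point is part~(1), where one additionally demands $H^i_{v,\bm C}(X,\mu_2^{\otimes i})=0$ for every $i<n$. This forces a far more delicate choice of $G$: its cohomology must carry the distinguished unramified class in degree exactly $n$ while admitting no unramified class in any lower degree, and controlling the latter means analyzing the entire Bloch--Ogus coniveau spectral sequence rather than one isolated group. A second technical step is to replace the open quotient $V/G$ by an honest smooth projective model without disturbing the computation: I would use Totaro's approximation of the classifying space $BG$ by quotients of representations, together with Bogomolov--Katsylo ``no-name'' reductions, to obtain a smooth projective rationally connected variety whose low-degree cohomology agrees with that of $G$ and on which the unramified class persists; birational invariance of unramified cohomology then transports the computation to this model.

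The main obstacle is thus the simultaneous control of an infinite family of cohomological constraints --- exhibiting, for each $n$ and each $l$, a finite group (equivalently a smooth projective rationally connected variety) whose unramified cohomology is concentrated in the prescribed degree. Once Morel's strict $\mathbb{A}^1$-invariance of the sheaves $\mathcal H^n$ is in hand, the homotopy-theoretic conclusion is formal; the real weight of the proof lies in the explicit group-cohomological construction and, for part~(1), in proving the vanishing below degree $n$.
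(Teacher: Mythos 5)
First, an important caveat: the paper does not prove this statement at all. It is quoted verbatim from Asok's article \cite{As} as background motivation, so there is no internal proof to compare against; the only meaningful benchmark is Asok's own argument. Measured against that, your ``obstruction'' step is essentially correct and is indeed Asok's: the Bloch--Ogus identification of $H^n_{v,\bm{C}}(X,\mu_l^{\otimes n})$ with $H^0_{\mathrm{Zar}}(X,\mathcal{H}^n)$, Morel's strict $\mathbb{A}^1$-invariance of the sheaves $\mathcal{H}^n$, the resulting triviality of unramified invariants on $\mathbb{A}^1$-connected smooth proper varieties, and the fact that stably $\bm{C}$-rational smooth proper varieties are $\mathbb{A}^1$-connected. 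Granting these (nontrivial but available) inputs, the theorem does reduce to exhibiting varieties with the prescribed unramified cohomology.

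The genuine gap is in the constructive half, which is where the entire weight of the theorem lies, and here your plan diverges from what can actually be carried out. For part (1) you offer no construction: you concede that a Peyre-style group $G$ would need unramified cohomology concentrated in degree exactly $n$ and that one must control the whole coniveau spectral sequence, but no finite group with these properties is known, and Asok does not use group quotients at all. His examples are smooth projective models of quadric bundles over rational bases whose generic fibre is (a neighbour of) a Pfister quadric $\langle\langle x_1,\ldots,x_n\rangle\rangle$: the nonvanishing class in degree $n$ is the symbol $(x_1,\ldots,x_n)$, and the vanishing in degrees $i<n$ is exactly what the Milnor conjecture --- via the Orlov--Vishik--Voevodsky description of the kernel of restriction to the function field of a Pfister quadric --- delivers. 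This is why the statement concerns $\mu_2$-coefficients and why \cite{As} is titled after the Milnor and Bloch--Kato conjectures; the low-degree vanishing you identify as ``the genuinely difficult point'' is not obtained by a delicate group-theoretic choice but by this quadratic-form input. For part (2), ``iterating or tensoring'' Peyre-type $p$-groups is not a known operation that raises the degree of unramified cohomology: Bogomolov's formula handles degree $2$ and Peyre's work \cite{Pe2} degree $3$, but no group-theoretic mechanism producing nontrivial unramified classes in arbitrary degree $n$ for arbitrary $l$ exists, and your appeal to a ``suitably extended'' residue formula in higher degrees has no backing. Asok's proof of part (2) instead rests on norm varieties attached to mod-$l$ symbols, with the Bloch--Kato conjecture (Rost--Voevodsky) playing the role the Milnor conjecture plays in part (1). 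As written, then, your proposal establishes only the formal reduction and leaves both constructions --- hence both halves of the theorem --- unproven.
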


We now consider $p$-groups of small order. By
Fischer's Theorem (Theorem \ref{t2.2}), if $G$ is an abelian
$p$-group and the base field $k$ contains enough roots of unity,
then $k(G)$ is $k$-rational. Hence we may focus on non-abelian
groups.

\begin{theorem}[{Chu and Kang \cite[Theorem 1.6]{CK}}] \label{t1.4}
Let $G$ be a $p$-group of order $p^3$ or $p^4$.
If $k$ is a field satisfying {\rm (i)} $\fn{char}k=p>0$, or {\rm (ii)} $\fn{char}k\ne p$ with $\zeta_e\in k$ where $e$ is the exponent of the group $G$,
then $k(G)$ is $k$-rational.
\end{theorem}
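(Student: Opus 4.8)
The plan is to dispose of the two hypotheses separately. Under (i), when $\fn{char}k=p$, the assertion is immediate from the Kuniyoshi--Gasch\"utz theorem (Theorem \ref{t2.1}), which already gives rationality of $k(G)$ for \emph{every} finite $p$-group; so the entire content lies in case (ii), where $\fn{char}k\ne p$ and $\zeta_e\in k$. If $G$ is abelian this is exactly Fischer's theorem (Theorem \ref{t2.2}), since $e$ is the exponent of $G$. Hence it remains only to treat the finitely many \emph{non-abelian} groups of order $p^3$ and $p^4$, and I would argue group by group, organizing the work by the structure of a chosen abelian normal subgroup.

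The engine of the proof is a reduction from the (large) regular representation to a small faithful monomial action, via the no-name lemma: if $G$ acts faithfully and $k$-linearly on $V=W\oplus U$ with $W$ a faithful $kG$-submodule, then $k(V)^G$ is rational over $k(W)^G$, a consequence of Speiser's lemma (Hilbert 90), since over the field $k(W)$ the representation $U$ becomes a trivializable $G$-lattice. Applying this to the regular representation reduces the computation of $k(G)$ to that of $k(W)^G$ for a minimal faithful representation $W$. For each non-abelian $G$ on the list I would select a maximal abelian normal subgroup $A\trianglelefteq G$ of index $p$ (such an $A$ exists, as one checks from the classification: for order $p^3$ every maximal subgroup is abelian) and build $W$ from characters of $A$. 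Because $\zeta_e\in k$, every character of $A$ is realized over $k$; when $Z(G)$ is cyclic a single induced module $W=\fn{Ind}_A^G\chi$ of dimension $p$ is already faithful, and in the natural induced basis $A$ acts diagonally while a fixed generator $\sigma$ of $G/A\cong C_p$ cyclically permutes the $p$ coordinates up to scalars. Thus $W$ carries a \emph{monomial} $G$-action on variables $x_0,\dots,x_{p-1}$ with $\sigma\cdot x_i=x_{i+1}$ (indices mod $p$) twisted by the scalar $\chi(\sigma^p)$, the latter being trivial exactly when $G$ has exponent $p$.

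To evaluate $k(x_0,\dots,x_{p-1})^G$ I would take invariants in two stages. First, $A$ acts diagonally through the characters $\chi,\chi^{\sigma},\dots,\chi^{\sigma^{p-1}}$, so the $A$-invariant Laurent monomials form a full-rank sublattice of $\bm{Z}^p$; by the elementary divisor theorem this lattice is free, and a basis of it yields $p$ algebraically independent monomials $m_1,\dots,m_p$ with $k(x_0,\dots,x_{p-1})^A=k(m_1,\dots,m_p)$, rational over $k$. The residual group $C_p=G/A$ then acts again monomially on $m_1,\dots,m_p$, and a standard reduction (splitting off the norm monomial, which is $C_p$-fixed) lowers this to a monomial action on $p-1$ variables. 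Second, I would settle the rationality of the fixed field of this residual cyclic monomial action, which is untwisted in the exponent-$p$ case and twisted by $\chi(\sigma^p)$ in the exponent-$p^2$ case; in either situation the low number of variables should let me invoke the known rationality theorems for monomial actions of cyclic groups, or else exhibit an explicit transcendence basis of invariants.

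The decisive difficulty is precisely this last step: the permutation part of the $C_p$-action is harmless, but the character twists can obstruct a naive linearization, and the computation must be organized so that the count of moving variables stays small enough for the available cyclic-monomial rationality results to apply. A secondary obstacle is that several groups of order $p^4$ have non-cyclic center and therefore admit no faithful \emph{irreducible} representation; for these (for instance $C_p$ times an extraspecial group) a single induced character cannot be faithful, since its restriction to the non-cyclic center has nontrivial kernel, so one must replace $W$ by a sum of two induced modules, or split off a central direct factor by a product argument, before the same two-stage monomial computation can be run. I expect the bulk of the labor to be the case-by-case verification that each residual monomial action, twisted or not, indeed has a rational fixed field.
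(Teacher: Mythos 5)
Your proposal cannot be checked line-by-line against this paper, because the paper does not prove Theorem \ref{t1.4} at all: the result is quoted from Chu and Kang \cite{CK}. Measured against the method of \cite{CK}, your framework is the right one --- case (i) by Theorem \ref{t2.1}, abelian $G$ by Theorem \ref{t2.2}, and, for non-abelian $G$, a no-name reduction (Theorem \ref{t2.3} in this paper's setting) to $k(W)^G$ for a faithful representation $W$ built from characters of an abelian normal subgroup $A$ of index $p$, followed by a two-stage passage to invariants (the diagonal $A$-action first, then a residual twisted monomial action of $C_p=G/A$). Two repairs along the way: the existence of an abelian normal subgroup of index $p$ in every group of order $p^4$ is true but needs an argument that your parenthetical (which covers only $p^3$) does not give --- if $G$ has class $2$ then $|Z(G)|=p^2$ and $\langle Z(G),x\rangle$ works for any $x\notin Z(G)$, while if $G$ has class $3$ then $C_G([G,G])$ is abelian of index $p$; also, the twist $\chi(\sigma^p)$ is trivial if and only if $\sigma^p\in\ker\chi$, not ``exactly when $G$ has exponent $p$'', and the norm monomial is $C_p$-fixed only up to this twist, so splitting it off is not automatic.

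The genuine gap is your last step, and it is where the entire content of the theorem lies. After your reduction you are left with a monomial action of $C_p$, possibly with root-of-unity coefficients, on roughly $p-1$ variables (roughly $2p-1$ when $Z(G)$ is not cyclic and two induced modules are needed), and you propose to conclude by ``invoking the known rationality theorems for monomial actions of cyclic groups''. No such theorems exist in the generality you need: rationality of (twisted) monomial actions is known only in dimension at most $3$ (Hajja, Hajja--Kang, and \cite{HKY}, where non-rational twisted examples already occur), whereas your residual actions live in an unbounded number of variables as $p$ grows. Nor can you fall back on lattice-theoretic results such as Theorem \ref{t2.5}: its hypothesis that $\bm{Z}[\zeta_n]$ be a unique factorization domain fails for $n=p\geq 23$, so it cannot cover all primes. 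What \cite{CK} actually does at this point is resolve each residual monomial action by explicit changes of variables tailored to the specific small lattices that occur for $|G|\leq p^4$; that this step is not routine is exactly what Sections 4 and 5 of the present paper illustrate, where for $p=3$ and order $3^5$ the analogous computations require new tricks (Step 4 of Case 1 in Section 4, Steps 3 and 5 of Case 1 in Section 5) and, for the family $\Phi_7$, remain unsolved. As written, your proposal defers precisely this decisive computation, so it is a plausible plan rather than a proof.
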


By the above Theorem \ref{t1.2} and Theorem \ref{t1.4},
it is interesting to know whether $k(G)$ is $k$-rational if $G$ is any
$p$-group of order $p^5$. Here is an answer when $p=2$.

\begin{theorem}[{Chu, Hu, Kang and Prokhorov \cite[Theorem 1.5]{CHKP}}] \label{t1.5}
Let $G$ be a group of order $32$ with exponent $e$.
If $k$ is a field satisfying {\rm (i)} $\fn{char} k=2$,
or {\rm (ii)} $\fn{char}k\ne 2$ with $\zeta_e \in k$, then $k(G)$ is $k$-rational.
\end{theorem}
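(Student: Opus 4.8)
Since case (i), $\fn{char}k = 2$, is the assertion of Theorem \ref{t2.1} (Kuniyoshi--Gasch\"utz) for $p = 2$, I would dispose of it at once and assume from now on that $\fn{char}k \neq 2$ and $\zeta_e \in k$. If $G$ is abelian, Fischer's Theorem (Theorem \ref{t2.2}) finishes the proof, so I may assume $G$ is one of the non-abelian groups of order $32$. The overall plan is to attach to each such $G$ an explicit \emph{monomial} action whose fixed field $F$ has the property that $k(G)$ is $k$-rational if and only if $F$ is, and then to settle the rationality of $F$ group by group.

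For the structural reduction, let $A$ be a maximal abelian normal subgroup of $G$. Since $G$ is a $2$-group one has $C_G(A) = A$, hence $Q := G/A$ embeds into $\fn{Aut}(A)$. As $G$ is non-abelian, $A \neq G$; and because $\fn{Aut}(C_4)$ and $\fn{Aut}(C_2 \times C_2) \cong GL_2(\bm{F}_2)$ have order less than $8$ (while $\fn{Aut}(A)$ is trivial for $|A| \le 2$), the index $[G:A]$ can be neither $8$ nor $\ge 16$. Hence $|A| \in \{8, 16\}$ and $Q$ is one of $C_2$, $C_4$, $C_2 \times C_2$. Because $\zeta_e \in k$ and $A$ is abelian of exponent dividing $e$, I would diagonalize the $A$-action on $k(x_g : g \in G)$: the characters $\hat A = \fn{Hom}(A, k^\times)$ furnish a system of $A$-eigencoordinates on which $Q$ acts by monomial transformations, twisted by the $2$-cocycle in $H^2(Q, \hat A)$ classifying the extension $1 \to A \to G \to Q \to 1$. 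In lattice language this replaces $k(G)$ by a field $k_\alpha(M)^{Q}$, where $M$ is the $\bm{Z}[Q]$-lattice $\hat A$ and $\alpha$ the twisting class; standard flabby/coflabby-class reductions (as in Endo--Miyata and Lenstra) then cut $M$ down to the relevant small-rank lattice, at the cost only of adjoining independent transcendentals.

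When $Q$ is cyclic, i.e. $Q = C_2$ or $Q = C_4$, the reduced problem is the fixed field of a single monomial automorphism of finite order over a field containing $\zeta_{|Q|}$; these are rational by the established rationality theorems for cyclic monomial actions (the same machinery underlying Theorem \ref{t1.4}), so all such $G$ are settled immediately. The genuine difficulty is the family with $Q = C_2 \times C_2$, namely the groups whose maximal abelian normal subgroup has index $4$ with non-cyclic quotient. For these I would work in two stages: first choose a subgroup $C_2 \le Q$, use the cyclic result to descend to an intermediate subfield that is already rational over $k$, and then analyze the residual $C_2$-action on that subfield, producing explicit purely transcendental generators by direct changes of variables.

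The hard part is exactly this $Q = C_2 \times C_2$ case. Monomial actions of $C_2 \times C_2$ in transcendence degrees $3$ and $4$ are precisely the regime in which the unramified Brauer group --- the Saltman--Bogomolov obstruction of Theorems \ref{t1.1} and \ref{t1.2} --- can fail to vanish, so rationality here is not a formality: one must first verify, for each of the finitely many such groups of order $32$, that the twisting class $\alpha$ and the $Q$-lattice $M$ combine to give a trivial obstruction, and then actually exhibit the rational generators rather than settle for retract rationality. Carrying out these explicit linearizations uniformly across the $C_2 \times C_2$ groups is where I expect the real work, and the possible need for ad hoc coordinate changes, to lie.
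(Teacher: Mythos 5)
First, a point of comparison: this paper does not prove Theorem \ref{t1.5} at all --- it is quoted from \cite{CHKP}, where it is established by exactly the kind of explicit, group-by-group work this paper performs in Sections 4 and 5 for order $243$: choose a faithful representation of $G$ induced from an abelian normal subgroup, invoke Theorem \ref{t2.3} so that $k(G)$ is \emph{purely transcendental} over $k(V)^G$, and then linearize the resulting monomial action by concrete changes of variables. Your structural reduction is sound as far as it goes: a maximal abelian normal subgroup $A$ of a $2$-group is self-centralizing, so indeed $|A|\in\{8,16\}$ and $Q=G/A\in\{C_2,C_4,C_2\times C_2\}$, and the cyclic-quotient half of your plan does work (it follows, for instance, from Theorem \ref{t2.5}, since $\bm{Z}[\zeta_2]$ and $\bm{Z}[\zeta_4]$ are unique factorization domains).

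The gap lies in the two steps that were supposed to carry the load. (1) Your pivotal reduction --- ``standard flabby/coflabby-class reductions (as in Endo--Miyata and Lenstra) then cut $M$ down \dots at the cost only of adjoining independent transcendentals'' --- is a \emph{stable}-birational calculus: flabby-class equivalence of lattices only tells you that the two invariant fields become isomorphic after adjoining independent variables \emph{to each side}. If the variables land on the $k(G)$ side, rationality of the cut-down field $F$ gives only stable $k$-rationality of $k(G)$, whereas the theorem asserts $k$-rationality; so the equivalence ``$k(G)$ is $k$-rational if and only if $F$ is'' that your plan rests on is precisely what this machinery does not deliver. Rationality-preserving reductions exist (splitting off a faithful subrepresentation as in Theorem \ref{t2.3}, or splitting off genuine permutation summands), but they must be arranged by hand and are not a formal consequence of lattice class-group computations. (2) The case $Q=C_2\times C_2$, which you correctly identify as the entire substance of the theorem, is left as a declaration of intent (``where I expect the real work \dots to lie''); no linearization is produced, so the proof is not complete for a single one of those groups. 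Finally, the Brauer-group caution is moot: for $p=2$ the bound $|G|\ge 2^6$ for nonvanishing $\fn{Br}_{v,\bm{C}}(\bm{C}(G))$ does hold (cf.\ \cite{CHKK}), so no obstruction can occur at order $32$; and in any case checking that the obstruction vanishes is only a necessary condition --- it contributes nothing toward actually exhibiting a transcendence basis, which is what both \cite{CHKP} and this paper must do by explicit computation.
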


What happens to groups of order $p^5$ with $p\ne 2$?

In \cite{Bo}, Bogomolov claims a property for the unramified
Brauer group.

\begin{prop}[{\cite[Lemma 4.11]{Bo}, \cite[Corollary 2.11]{BMP}}] \label{p1.6}
If $G$ is a $p$-group with $\fn{Br}_{v,\bm{C}}(\bm{C}(G))\ne 0$,
then $|G|\ge p^6$.
\end{prop}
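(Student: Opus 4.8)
The plan is to compute $\fn{Br}_{v,\bm{C}}(\bm{C}(G))$ through Bogomolov's group-theoretic description and then bound $|G|$ from below by two independent structural constraints. Recall that by Bogomolov's formula (\cite[Theorem 3.1]{Bo}) the group $B_0(G):=\fn{Br}_{v,\bm{C}}(\bm{C}(G))$ is identified with the subgroup of $H^2(G,\bm{Q}/\bm{Z})$ consisting of those classes whose restriction to every bicyclic (i.e.\ $2$-generated abelian) subgroup $A\le G$ vanishes; dually it is a quotient of the Schur multiplier $H_2(G,\bm{Z})$ by the part generated by commuting pairs. In particular $B_0(G)=0$ whenever $G$ is abelian or $2$-generated, so we may assume $G$ is non-abelian and ask how large it must be once $B_0(G)\ne 0$.

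First I would bound the commutator subgroup. The key input is a known vanishing result (see \cite{Bo}): if $[G,G]$ is cyclic then $B_0(G)=0$, essentially because in that case every class in $H^2(G,\bm{Q}/\bm{Z})$ is already detected on bicyclic subgroups. Consequently $B_0(G)\ne 0$ forces $[G,G]$ to be non-cyclic, hence $|[G,G]|\ge p^2$. Since $[G,G]\le \Phi(G)$, this already gives $|\Phi(G)|\ge p^2$.

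Next I would bound the number of generators $d(G)=\dim_{\bm{F}_p} G/\Phi(G)$, the claim being that $B_0(G)\ne 0$ forces $d(G)\ge 4$. To see this, pass to the commutator pairing on $V:=G/\Phi(G)$: the class-$2$, exponent-$p$ data of $G$ produce an alternating map $c:\wedge^2 V\to [G,G]$, and a non-trivial element of $B_0(G)$ corresponds, via the bicyclic-restriction description, to a relation in $R:=\ker c\subseteq \wedge^2 V$ that is \emph{not} a combination of decomposable bivectors $v\wedge w$ (these decomposables are exactly the relations detected on bicyclic subgroups, since $v\wedge w$ records the $2$-dimensional subspace $\langle v,w\rangle$). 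If $\dim V\le 3$ then every element of $\wedge^2 V$ is decomposable, so $R$ is spanned by decomposables and no such class can exist; hence $d(G)\ge 4$ and $|G/\Phi(G)|\ge p^4$. Combining this with the previous bound and using $|G|=|G/\Phi(G)|\cdot|\Phi(G)|$ together with $[G,G]\le\Phi(G)$ yields $|G|\ge p^4\cdot p^2=p^6$, as required.

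The main obstacle is the generator bound of the third paragraph. The bivector-decomposability argument is clean for special groups of class $2$ and exponent $p$, so the real work is the reduction of an arbitrary $p$-group with $B_0(G)\ne 0$ to this model situation: one must show that the relevant part of $B_0(G)$ is genuinely controlled by the commutator pairing on $G/\Phi(G)$, without losing non-triviality when $G^p$ and the higher terms of the lower central series are quotiented out. Making this reduction precise, equivalently tracking Bogomolov's formula through these quotients so that a non-zero class survives into the class-$2$ exponent-$p$ model, is where the actual difficulty lies; the two vanishing inputs and the final counting are comparatively routine.
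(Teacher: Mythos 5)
The statement you set out to prove is false, and the paper quotes it only in order to say so: immediately after Proposition \ref{p1.6} the authors write that this claim of \cite{Bo} and \cite{BMP} ``was disproved'' by Moravec's result, Theorem \ref{t1.7}, and (for all odd $p$) by Theorem \ref{t1.9}. Concretely, the three groups $G(243,i)$, $28\le i\le 30$, of order $3^5$, which form the isoclinism family $\Phi_{10}$, satisfy $\fn{Br}_{v,\bm{C}}(\bm{C}(G))\ne 0$. So there is no proof in the paper to compare yours with, and any argument reaching the conclusion $|G|\ge p^6$ must break somewhere.

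Where yours breaks is the generator bound $d(G)\ge 4$ (and already the side remark in your first paragraph that $B_0(G)=0$ whenever $G$ is $2$-generated). By Theorem \ref{t1.9}, each $G$ in $\Phi_{10}$ has $G/[G,G]\simeq C_p\times C_p$; since $[G,G]\le\Phi(G)$ and a $p$-group with cyclic Frattini quotient is cyclic, this forces $G/\Phi(G)\simeq C_p\times C_p$, i.e.\ $d(G)=2$ --- yet $B_0(G)\ne 0$. The mathematical reason your bivector argument cannot be salvaged is the one you yourself flagged as ``the real work'': Bogomolov's decomposability criterion computes $B_0$ for groups of nilpotency class $2$ in terms of the commutator pairing on $G/Z(G)$, and it says nothing about an arbitrary $p$-group in terms of a pairing on $G/\Phi(G)$. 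The groups in $\Phi_{10}$ have nilpotency class $4$ (see the table in Section 3), and their nonzero unramified classes are produced precisely by the deeper terms of the lower central series; they die when one passes to the class-$2$, exponent-$p$ model, so non-triviality of $B_0$ genuinely does not survive that reduction. In other words, the postponed reduction is not a technical step waiting to be filled in --- it is the step that is impossible, as the $\Phi_{10}$ counterexamples show. (Your other input, that cyclic $[G,G]$ implies $B_0(G)=0$, is a correct lemma of Bogomolov and is not where the trouble lies.)
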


Unfortunately, the above proposition was disproved by the following
result of Moravec.

\begin{theorem}[Moravec {\cite[Section 8]{Mo}}] \label{t1.7}
Let $G$ be a group of order $243$. Then
$\fn{Br}_{v,\bm{C}}(\bm{C}(G))\ne 0$ if and only if $G=G(243,i)$
with $28\le i\le 30$, where $G(243,i)$ is the $i$-th group among
groups of order $243$ in the GAP database.
\end{theorem}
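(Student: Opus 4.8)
The plan is to convert the vanishing (or non-vanishing) of the unramified Brauer group into a purely group-theoretic computation and then carry that computation out across all groups of order $243$. The key input is Bogomolov's identification of $\fn{Br}_{v,\bm{C}}(\bm{C}(G))$ with the Bogomolov multiplier $B_0(G)$, the subgroup of $H^2(G,\bm{Q}/\bm{Z})$ consisting of the classes whose restriction to every abelian (equivalently, every bicyclic) subgroup is trivial. Dually, I would use Moravec's reformulation via the nonabelian exterior square: writing $G\wedge G$ for the exterior square and $G\wedge G\twoheadrightarrow [G,G]$, $g\wedge h\mapsto [g,h]$, for the commutator map, one has $M(G)=H_2(G,\bm{Z})=\ker(G\wedge G\to[G,G])$, and the subgroup
$$M_0(G)=\langle g\wedge h : g,h\in G,\ [g,h]=1\rangle$$
of commuting wedges is contained in $M(G)$. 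Moravec's theorem gives $B_0(G)\cong\fn{Hom}(M(G)/M_0(G),\bm{Q}/\bm{Z})$, so that $\fn{Br}_{v,\bm{C}}(\bm{C}(G))\ne 0$ if and only if the Schur multiplier $M(G)$ is strictly larger than the subgroup $M_0(G)$ generated by its commuting wedges.

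The second step is to cut the number of cases down drastically using the fact that $B_0$ is an invariant of isoclinism: since both $M(G)$ and the subgroup $M_0(G)$ are governed by the commutator structure of $G$ modulo its centre, the quotient $M(G)/M_0(G)$ depends only on the isoclinism class of $G$. Hence it suffices to compute $B_0$ on one representative of each isoclinism family. By the classification of groups of order $p^5$, the $67$ groups of order $243$ fall into the ten isoclinism families $\Phi_1,\dots,\Phi_{10}$, so only ten computations are required, and the three groups $G(243,i)$, $28\le i\le 30$, are precisely the family $\Phi_{10}$.

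For each family I would fix a power-commutator presentation of a representative, compute its Schur multiplier $M(G)$ and the exterior square $G\wedge G$ explicitly (for instance with the HAP routines in GAP, or by hand from the presentation), and then decide whether the commuting wedges generate all of $M(G)$. For the nine families other than $\Phi_{10}$ the conclusion is $M(G)=M_0(G)$, i.e.\ $B_0(G)=0$; one proves this by exhibiting, for each generator of $M(G)$, an expression as a product of wedges $g\wedge h$ with $[g,h]=1$. For $\Phi_{10}$ one instead produces a class in $M(G)$ surviving in the quotient $M(G)/M_0(G)$, thereby showing $B_0(G)\ne 0$ for all three groups in the family.

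The main obstacle is the negative direction for the rich families. Showing $B_0(G)=0$ requires verifying that $M_0(G)$ exhausts the Schur multiplier, which is delicate precisely for the families with the most complicated commutator structure (such as $\Phi_7$ through $\Phi_9$): one must track carefully the relations in $G\wedge G$ and confirm that every nontrivial element of $M(G)$ can be realized by commuting wedges. By contrast, the isoclinism reduction keeps the bookkeeping manageable, and the non-vanishing for $\Phi_{10}$ is comparatively direct once a suitable surviving class has been identified.
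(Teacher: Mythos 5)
Your overall framework is exactly that of the proof the paper is citing: Theorem \ref{t1.7} is not proved in this paper at all, but quoted from Moravec [Mo, Section 8], whose argument is precisely the identification $\fn{Br}_{v,\bm{C}}(\bm{C}(G))\cong B_0(G)\cong \fn{Hom}(M(G)/M_0(G),\bm{Q}/\bm{Z})$ via the nonabelian exterior square, followed by a machine computation of $M(G)/M_0(G)$. So your first and third steps reconstruct the cited proof faithfully.

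The genuine gap is in your second step, the reduction to one representative per isoclinism family. Your justification --- that ``both $M(G)$ and the subgroup $M_0(G)$ are governed by the commutator structure of $G$ modulo its centre'' --- is false: the Schur multiplier is \emph{not} an isoclinism invariant. All abelian groups are isoclinic to the trivial group, yet $M(C_9)=0$ while $M(C_3\times C_3)\simeq C_3$; neither $M(G)$ nor $M_0(G)$ separately is determined by $G/Z(G)$ and $[G,G]$. What is true is that the quotient $M(G)/M_0(G)$, i.e.\ $B_0(G)$, is an isoclinism invariant, but this is a nontrivial theorem (proved by Moravec in a later paper, after being raised as a question in the wake of [HKKu]); it requires constructing compatible homomorphisms between the exterior squares of isoclinic groups and does not follow from the sentence you offer. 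As written, your reduction step rests on an incorrect claim, and everything downstream (checking only ten representatives) is unsupported. The fix is either (i) to invoke or prove the isoclinism invariance of $B_0$ honestly, or (ii) to drop the reduction and run the exterior-square computation on all $67$ groups of order $243$ --- which is what Moravec's Section 8 actually does, and why the present paper describes his proof as relying on computer computation. With either patch, the remaining content (showing $M(G)=M_0(G)$ for the groups in $\Phi_1,\ldots,\Phi_9$ and exhibiting a surviving class for the three groups $G(243,28)$, $G(243,29)$, $G(243,30)$ constituting $\Phi_{10}$) is exactly the cited computation; a non-computational proof of the non-vanishing half is the separate contribution of [HoK].
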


Moravec's proof relies on computer computation.
In \cite{HoK},
a theoretic proof  of the non-vanishing of the three groups of order
$243$ is given. Recently,  Hoshi, Kang and Kunyavskii \cite{HKKu}
are able to determine which $p$-groups have
$\fn{Br}_{v,\bm{C}}(\bm{C}(G))\ne 0$ according to the isoclinism
families they belong to.

\begin{defn} \label{d1.8}
Two $p$-groups $G_1$ and $G_2$ are called isoclinic if there exist
group isomorphisms $\theta\colon G_1/Z(G_1) \to G_2/Z(G_2)$ and
$\phi\colon [G_1,G_1]\to [G_2,G_2]$ such that $\phi([g,h])$
$=[g',h']$ for any $g,h\in G_1$ with $g'\in \theta(gZ(G_1))$, $h'\in
\theta(hZ(G_1))$. {\rm {(Note that $Z(G)$ and $[G,G]$ denote the
center and the commutator subgroup of the group $G$
respectively)}}.\end{defn}

For a prime number $p$ and a fixed integer $n$, let $G_n(p)$ be the
set of all non-isomorphic groups of order $p^n$. In $G_n(p)$
consider an equivalence relation: two groups $G_1$ and $G_2$ are
equivalent if and only if they are isoclinic. Each equivalence class
of $G_n(p)$ is called an isoclinism family.
There exist ten isoclinism families $\Phi_1,\ldots,\Phi_{10}$
for groups of order $p^5$.

The main theorem in \cite {HKKu} can be stated as

\begin{theorem}[{Hoshi, Kang and Kunyavskii \cite[Theorem 1.12]{HKKu}}] \label{t1.9}
Let $p$ be any odd prime number, $G$ be a group of order $p^5$. Then
${\rm{Br}}_{v,\bm C}(\bm C(G))\ne 0$ if and only if $G$ belongs to
the isoclinism family $\Phi_{10}$. Each group $G$ in the family
$\Phi_{10}$ satisfies the condition $G/[G,G] \simeq C_p\times C_p$.
There are precisely $3$ groups which belong to $\Phi_{10}$ if $p=3$.
For $p\ge5$,
the total number of non-isomorphic groups which belong to $\Phi_{10}$ is
\[
1+\gcd\{4,p-1\}+\gcd \{3,p-1\}.
\]
\end{theorem}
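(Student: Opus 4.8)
The plan is to recast the unramified Brauer group group-theoretically and then exploit its behaviour under isoclinism. Over $\bm C$ one has $\fn{Br}_{v,\bm C}(\bm C(G))\cong B_0(G)$, the Bogomolov multiplier, which by Bogomolov's formula \cite[Theorem 3.1]{Bo} is the subgroup of $H^2(G,\bm Q/\bm Z)$ consisting of the classes whose restriction to every bicyclic (two-generated abelian) subgroup $A\le G$ vanishes:
\[
B_0(G)=\bigcap_{A}\ker\bigl(\fn{res}^G_A\colon H^2(G,\bm Q/\bm Z)\to H^2(A,\bm Q/\bm Z)\bigr).
\]
The central tool I would use is Moravec's theorem that $B_0$ is an isoclinism invariant: if $G_1$ and $G_2$ are isoclinic, then $B_0(G_1)\cong B_0(G_2)$. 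If one wants this self-contained, it follows from the description of $B_0(G)$ as dual to the quotient of the Schur multiplier $H_2(G,\bm Z)$ by the subgroup generated by the commuting pairs $x\wedge y$ with $[x,y]=1$, a quotient which one checks is unchanged under the isomorphisms $\theta,\phi$ of Definition \ref{d1.8}. Consequently $B_0(G)$ is constant on each of the ten families $\Phi_1,\dots,\Phi_{10}$, so it suffices to evaluate it on a single representative of each family.

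Second, I would fix for each family a presentation of a representative valid uniformly for all odd primes $p$, following the standard isoclinism classification of the groups of order $p^5$. For the nine families $\Phi_1,\dots,\Phi_9$ I would show $B_0=0$; the quickest route is to exhibit, for every nonzero class in $H^2(G,\bm Q/\bm Z)$, a bicyclic subgroup on which it restricts nontrivially, which the formula above then converts into the vanishing of $B_0$. For $\Phi_{10}$ I would instead produce a single class surviving all restrictions to bicyclic subgroups, i.e.\ a nonzero element of the displayed intersection; the persistence of this class across all odd $p$ is what yields the general statement, with the case $p=3$ already anchored by Moravec's computation (Theorem \ref{t1.7}), which pins down the three nontrivial groups of order $243$ as exactly $G(243,i)$ with $28\le i\le 30$.

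For the structural and numerical assertions, I would first record that every $G\in\Phi_{10}$ satisfies $G/[G,G]\cong C_p\times C_p$: this too is read off the chosen representative, being determined by the isoclinism data. The count of isomorphism classes inside $\Phi_{10}$ I would extract from the classification, where the distinct classes correspond to orbits of an $\bm F_p^\times$-action on the structure constants of the representative; the three summands $1$, $\gcd\{4,p-1\}$ and $\gcd\{3,p-1\}$ arise as the numbers of such orbits, the two gcd terms counting fourth and cube classes of a parameter in $\bm F_p^\times$. The prime $p=3$ is genuinely exceptional, as the naive orbit count degenerates; there I would simply fall back on Theorem \ref{t1.7} together with the GAP data to confirm the exact number $3$.

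The step I expect to be the main obstacle is the uniform evaluation of $B_0$ on the representatives: proving vanishing for all nine families $\Phi_1,\dots,\Phi_9$ and nonvanishing for $\Phi_{10}$ simultaneously for every odd $p$, since this requires controlling the restriction maps to all bicyclic subgroups in a way that does not degenerate at small primes. The isoclinism invariance is precisely what makes the problem finite—ten computations rather than infinitely many—but the computations themselves, together with disentangling the $p$-dependence of the isomorphism count and the exceptional behaviour at $p=3$, are where the real work lies.
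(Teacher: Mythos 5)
This theorem is not proved anywhere in the present paper: it is imported verbatim from \cite[Theorem 1.12]{HKKu}, so there is no internal proof to compare against, only the cited source. Your outline --- Bogomolov's formula for $B_0(G)$ as the intersection of kernels of restrictions to bicyclic subgroups, the isoclinism invariance of the Bogomolov multiplier, reduction to a single representative of each of the ten families in James's classification of groups of order $p^5$, vanishing for $\Phi_1,\dots,\Phi_9$ and non-vanishing for $\Phi_{10}$, with the abelianization and the count $1+\gcd\{4,p-1\}+\gcd\{3,p-1\}$ (and the exceptional value $3$ at $p=3$) read off from that classification --- is precisely the strategy carried out in \cite{HKKu}, so your proposal matches the source's approach, with the family-by-family evaluations you rightly identify as the real work being exactly what that paper does in detail.
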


Note that, for $p=3$, the isoclinism family $\Phi_{10}$ consists of
the groups $\Phi_{10}(2111) a_r$ (where $r=0,1$) and $\Phi_{10}(1^5)$
\cite[page 621]{Ja}, which are just the groups $G(243,29)$,
$G(243,30)$ and $G(243,28)$ in  GAP code numbers respectively.

\bigskip
Now we turn to the other groups $G$ of order $243$ with
$\fn{Br}_{v,\bm{C}}(\bm{C}(G))=0$.  In this paper, we establish
the rationality of $k(G)$, where $k$ is any field with enough
roots of unity, except for those five groups which belong to
$\Phi_7$.

We state the main result of this paper as follows.

\begin{theorem} \label{thmain1}
Let $G$ be a group of order $243$ with exponent $e$. Let $k$ be a
field satisfying {\rm (i)} $\fn{char}k=3$, or {\rm (ii)}
$\fn{char}k\ne 3$ with $\zeta_e \in k$. If
$\fn{Br}_{v,\bm{C}}(\bm{C}(G))= 0$, then $k(G)$ is $k$-rational,
except possibly for the five groups $G$ which belong to the
isoclinism family $\Phi_7$, i.e. $G=G(243,i)$ with $56 \le i \le
60$.
\end{theorem}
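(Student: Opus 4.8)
The plan is to split according to the two hypotheses on $k$. In case (i), $\fn{char} k = 3$, every group of order $243$ is a $3$-group in equal characteristic, so $k(G)$ is $k$-rational immediately by Kuniyoshi--Gasch\"utz (Theorem~\ref{t2.1}); no case analysis is needed. The entire substance lies in case (ii), $\fn{char} k \ne 3$ with $\zeta_e \in k$, and there I would proceed through the ten isoclinism families $\Phi_1, \dots, \Phi_{10}$. The abelian groups (which form a single isoclinism family) are disposed of by Fischer's Theorem (Theorem~\ref{t2.2}); $\Phi_{10}$ is excluded by the standing hypothesis $\fn{Br}_{v,\bm C}(\bm C(G)) = 0$ (Theorem~\ref{t1.9}); and $\Phi_7$ is set aside in the statement. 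Thus it remains to establish $k$-rationality for representatives of the remaining nonabelian families.

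The uniform reduction I would use is the passage from $k(G)$ to the invariant field $k(V)^G$ of a faithful linear representation $V$. By the no-name lemma, $k(x_g : g \in G)^G = k(V_{\mathrm{reg}})^G$ is stably $k$-isomorphic to $k(V)^G$ for any faithful $k$-representation $V$ of $G$, so it suffices to exhibit one convenient $V$ with $k(V)^G$ rational. For each family I would take $V = \fn{Ind}_A^G \chi$ induced from a suitable one-dimensional character $\chi$ of a maximal abelian normal subgroup $A \triangleleft G$; because $\zeta_e \in k$ this representation is defined over $k$ and $G$ acts on $k(V)$ by monomial $k$-automorphisms. Restricting the action to $A$ and diagonalizing (the group algebra $k[A]$ splits completely since $\zeta_e \in k$) presents $k(V)$ as a rational extension of the subfield generated by the $A$-eigencoordinates, on which the quotient $G/A$ acts. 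The engine is then a step-by-step rationality theorem for such twisted multiplicative actions: whenever the $G/A$-action on the monomial lattice can be triangularized, or more precisely whenever that lattice is (stably) permutation and the attendant $H^1$ vanishes, one strips off variables one rational layer at a time and concludes that $k(V)^G$ is $k$-rational.

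Concretely, family by family: any $G$ possessing an abelian normal subgroup with cyclic quotient (equivalently a cyclic subgroup of index $p^2$) falls directly under the known rationality criteria for such $p$-groups, and these cover several of the families at once. For the families whose commutator structure forces a two-step rather than cyclic quotient, I would write down explicit generators and relations from the classification of groups of order $243$, build the monomial action explicitly, and perform the coordinate changes by hand, repeatedly using the normality of the center to introduce a new transcendence basis on which the action becomes purely multiplicative and then linear. The complete diagonalization of the central and surrounding abelian part, enabled by $\zeta_e \in k$, is exactly what makes each such explicit computation terminate.

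The main obstacle is precisely the step that fails for $\Phi_7$ and explains its exclusion: after inducing and diagonalizing, the residual $G/A$-action on the monomial lattice need not be conjugate to a permutation or triangular action, so the twisted-linear reduction leaves behind an invariant field I cannot certify rational by the step-by-step method --- the natural lattice is not (stably) permutation, and no alternative faithful representation appears to linearize the action. For the families actually treated, the work is to verify in each case that this obstruction vanishes (equivalently, that the relevant monomial lattice is flabby or permutation and the defining cocycle splits), and I expect that verification --- not the setup --- to be the delicate part, requiring a careful, case-specific choice of $A$, of $\chi$, and of the successive coordinate changes.
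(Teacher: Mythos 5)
Your overall frame coincides with the paper's: characteristic $3$ is settled by Theorem~\ref{t2.1}; $\Phi_1$ by Fischer's Theorem~\ref{t2.2}; $\Phi_{10}$ is excluded by Theorem~\ref{t1.9}; $\Phi_2$, $\Phi_3$, $\Phi_4$, $\Phi_9$ fall to Theorem~\ref{t2.5} and $\Phi_8$ to Theorem~\ref{tKa2} (note these are two \emph{different} criteria --- an abelian normal subgroup with quotient $C_3$ versus a cyclic subgroup of index $9$ --- not ``equivalent'' conditions as you assert); and the passage from $k(G)$ to $k(V)^G$ for a faithful induced representation $V$ is the paper's Theorem~\ref{t2.3}. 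But all of this only disposes of the easy families. The actual content of the theorem is the $k$-rationality of $k(G)$ for the nine groups in $\Phi_6$ and $\Phi_5$, i.e. $G(3),\ldots,G(9)$, $G(65)$, $G(66)$, and here your proposal has a genuine gap: you never prove it. You appeal to an unnamed ``step-by-step rationality theorem for twisted multiplicative actions,'' conditional on the monomial lattice being (stably) permutation and on an $H^1$ vanishing, and then declare that the required verification is ``the delicate part'' to be done case by case. No theorem of that generality exists (none is stated in the paper --- invariant fields of monomial actions in rank $\geq 3$ are not known to be rational in general, even over $\bm{C}$), and you verify the hypothesized condition for none of the nine groups. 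The proposal stops exactly where the proof has to begin.

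That this step cannot be treated as routine is demonstrated by the paper itself: Sections 4 and 5 occupy the bulk of the work and rest not on any lattice-theoretic criterion but on newly invented, case-specific coordinate changes (Step 4 of Case 1 in Section 4; Steps 3 and 5 of Case 1 in Section 5, which use Lemma~\ref{lem4}, Theorem~\ref{t2.6} and Lemma~\ref{l2.8}); moreover the authors stress that for $\Phi_7$ --- whose setup differs from that of $\Phi_5$ by an amount they call almost ``negligible'' --- the very same strategy failed despite repeated attempts. So one cannot infer from the successful setup (induction from an abelian normal subgroup, diagonalization, reduction to a monomial action) that the reduction terminates in a rational field; whether it does is precisely the case-specific question, and answering it for $\Phi_5$ and $\Phi_6$ is the theorem. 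Relatedly, your diagnosis of $\Phi_7$ (that ``the natural lattice is not (stably) permutation'') is an unsupported assertion: the paper establishes no such obstruction, and rationality for $\Phi_7$ is left open, not disproved.
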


The following two propositions provide some messages of $k(G)$
when $G$ is order $243$ and belongs to the isoclinism family
$\Phi_7$ or $\Phi_{10}$.

\begin{prop}[The case $\Phi_7$]\label{p1.15}
Let $G_1$ and $G_2$ be groups of order $243$ which belong to the
isoclinism family $\Phi_7$. If $k$ is a field with $\fn{char}k\neq
3$ and $\zeta_9\in k$, then $k(G_1)$ is $k$-isomorphic to
$k(G_2)$.

\end{prop}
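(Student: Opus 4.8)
The plan is to reduce, for each $G$ in $\Phi_7$, the field $k(G)=k(x_g:g\in G)^G$ to the invariant field of a small \emph{monomial} (twisted regular) representation of $G/Z(G)$ whose defining data are visibly controlled by the isoclinism invariants of $G$, and then to observe that these data coincide for $G_1$ and $G_2$. I would first record the common skeleton handed to us by isoclinism: since $G_1$ and $G_2$ have the same order, the isomorphism $\theta\colon G_1/Z(G_1)\to G_2/Z(G_2)$ forces $|Z(G_1)|=|Z(G_2)|$, while $\phi\colon[G_1,G_1]\to[G_2,G_2]$ together with the compatibility $\phi([g,h])=[g',h']$ transports the commutator pairing $G/Z(G)\times G/Z(G)\to[G,G]$ from $G_1$ to $G_2$. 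This shared pairing is the only input I will need to match. Note that I do \emph{not} attempt to decide rationality of the reduced field (which is exactly the unsettled $\Phi_7$ case of Theorem~\ref{thmain1}); I only need the two reductions to land on the same field.

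The engine of the reduction is eigenspace diagonalization of the centre. Choose a cyclic central subgroup $Z_0=\langle z\rangle\le Z(G)$. Because $\fn{char}k\ne3$ and the central generators arising for the $\Phi_7$ groups have order dividing $9$ with $\zeta_9\in k$, the element $z$ acts semisimply with eigenvalues in $k$ on the regular representation $V=\bigoplus_{g\in G}k\,x_g$, so $V=\bigoplus_{\chi}V_\chi$ splits into $z$-eigenspaces indexed by characters $\chi$ of $Z_0$; since $z$ is central, each $V_\chi$ is a $G$-submodule. Fixing a faithful $\chi$, the eigenspace $V_\chi$ is a \emph{faithful} $G$-module of dimension $|G|/|Z_0|$ on which $Z_0$ acts by scalars, namely the rank-one free module over the twisted group algebra (crossed product) $k_\omega[G/Z_0]$, where $\omega$ is the class of $1\to Z_0\to G\to G/Z_0\to1$ read through $\chi$. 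By the standard no-name reduction (a faithful summand absorbs every other summand into purely transcendental variables), $k(G)=k(V)^G$ is rational over $k(V_\chi)^G$, with $|G|-|G|/|Z_0|$ added transcendentals.

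Next I would iterate this step, peeling off one cyclic central factor at a time (using $\zeta_9\in k$ at each stage to diagonalize the order-$3$ and order-$9$ central generators), descending to the twisted regular representation of $G/Z(G)$ on $k(x_b:b\in G/Z(G))$ by monomial matrices. Now the comparison is immediate in principle: $\theta$ identifies the acting group $G_1/Z(G_1)$ with $G_2/Z(G_2)$, and the antisymmetric part of the twisting cocycle is precisely the commutator pairing, which $\phi$ and $\theta$ carry from $G_1$ to $G_2$. Hence the two monomial actions are conjugate, their invariant fields are $k$-isomorphic, and because $|G_1|=|G_2|$ and $|Z(G_1)|=|Z(G_2)|$ the layers peeled are of equal size, so the numbers of transcendentals added on the two sides agree. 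The stable isomorphism therefore upgrades to a genuine $k$-isomorphism $k(G_1)\cong_k k(G_2)$.

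The hard part is that isoclinism pins down only $G/Z(G)$, $[G,G]$ and the commutator pairing, whereas the class $\omega$ also sees the \emph{symmetric} part of the extension, i.e.\ how central elements appear as cubes and how the power structure of $G_1$ differs from that of $G_2$. The technical core is to show this symmetric discrepancy never changes the isomorphism type of the reduced invariant field: after diagonalizing with the roots of unity available in $k$, any mismatch in the power structure should be rewritable by an explicit monomial change of variables as a relabelling of coordinates plus extra transcendentals. This is exactly where $\zeta_9$ (not merely $\zeta_3$) is indispensable, since the order-$9$ central and commutator elements of the $\Phi_7$ groups must be diagonalized before the symmetric part can be gauged away. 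I expect the safest way to discharge this step is to carry out the reduction on the explicit presentations of $G(243,i)$, $56\le i\le60$, and exhibit the single common normal form to which all five fields reduce.
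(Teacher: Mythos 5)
Your outline stalls exactly at the step that constitutes the entire content of the proposition. In your third paragraph you assert that the two monomial actions are conjugate because $\theta$ matches the acting groups and ``the antisymmetric part of the twisting cocycle is precisely the commutator pairing''; but the monomial action of $G$ on the twisted regular representation of $G/Z(G)$ is governed by the full extension class in $H^2(G/Z(G),Z(G))$, not by its commutator pairing alone, and the five groups $G(243,i)$, $56\le i\le 60$, are distinguished from one another precisely by the data this argument ignores: they share the same central quotient and the same commutator pairing (that is what membership in $\Phi_7$ means) and differ only in the power relations, i.e.\ in which powers of $f_5$ occur as $f_1^3$, $f_2^3$, $f_3^3$. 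Your final paragraph concedes this and restates the missing step as the claim that the ``symmetric discrepancy'' can always be gauged away by a monomial change of variables; but that claim is never proved --- it is announced as an expectation, and the proposed way to discharge it is ``to carry out the reduction on the explicit presentations,'' which is not carried out. Note that if your conjugacy assertion were valid as stated, the proposition would follow for \emph{every} pair of isoclinic groups with no computation at all, which is far more than is known and is certainly not a formal consequence of isoclinism; proving a version of it for these five groups is exactly the task at hand. (Also, a factual slip: for $\Phi_7$ one has $Z(G)\simeq C_3$ and $[G,G]=\langle f_4,f_5\rangle$ of exponent $3$; the elements of order $9$ are non-central ones such as $f_2$ or $f_3$, so $\zeta_9$ enters through the representations, not through ``order-$9$ central generators.'')

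For contrast, the paper's proof \emph{is} the computation you defer, organized so that the symmetric discrepancy visibly disappears. It uses the $9$-dimensional faithful representations of Section 3 (induced from a character of $\langle f_3,f_4,f_5\rangle$), not the $81$-dimensional twisted regular representation; it passes to $\langle f_3,f_4,f_5\rangle$-invariant monomials $z_1,\ldots,z_9$ and inserts group-dependent constants $(m_1,m_2)$, equal to $(1,1)$, $(\zeta^2,1)$, $(\zeta,1)$, $(\zeta,\zeta)$ for $i=56,57,58,59$, into the definitions of $z_6$ and $z_8$. With this normalization the actions of $f_1,f_2$ on $z_1,\ldots,z_8$ become \emph{literally identical} for the four groups, and the group dependence survives only as roots of unity in the action on the single variable $z_9$ (namely $z_9\mapsto m_2 z_4z_9/z_1$ and $z_9\mapsto m_1^2\zeta z_4z_9/z_1$). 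Since that action is linear in $z_9$ over $k(z_1,\ldots,z_8)$, Theorem \ref{t2.3} replaces $z_9$ by an invariant transcendental $Z_9$, yielding the common normal form $k(z_1,\ldots,z_8)^{\langle f_1,f_2\rangle}(Z_9)$; a separate short argument using ratios $X_i=x_i/x_1$ and Theorem \ref{t2.4} identifies $k(G(56))$ with $k(G(60))$, whose representations differ only by the scalar $\eta$ on $f_3$. So the mechanism that kills the symmetric part is concrete and twofold: scalar twists die upon passing to ratios, and the residual discrepancy is concentrated in one coordinate that Theorems \ref{t2.3} and \ref{t2.4} split off. To turn your proposal into a proof you would have to exhibit the analogous normalization in your $81$-variable model (harder than the paper's $9$-variable one), or else prove a general lemma that symmetric cocycle discrepancies never change the invariant field --- no such lemma is available.
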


\begin{prop}[The case $\Phi_{10}$]\label{p1.16}
Let $G_1$ and $G_2$ be groups of order $243$ which belong to the
isoclinism family $\Phi_{10}$. If $k$ is a field with
$\fn{char}k\neq 3$ and $\zeta_9\in k$, then $k(G_1)$ is
$k$-isomorphic to $k(G_2)$.

\end{prop}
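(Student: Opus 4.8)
The plan is to run one and the same linearization of the regular representation on $k(G_1)$ and on $k(G_2)$, and then to check that the two outputs coincide over $k$, the groups differing only by a twist by roots of unity that the hypothesis $\zeta_9\in k$ renders harmless. Since $G_1,G_2$ are isoclinic and of equal order, fix the isomorphisms $\theta\colon G_1/Z(G_1)\to G_2/Z(G_2)$ and $\phi\colon [G_1,G_1]\to [G_2,G_2]$ compatible with commutators, and note that $|Z(G_1)|=|Z(G_2)|$ and $\exp G_i\mid 9$. First I would choose a normal abelian subgroup $N_i\triangleleft G_i$ (for instance a maximal one, taken to contain $Z(G_i)$ and $[G_i,G_i]$) in a way compatible with $\theta$ and $\phi$, so that the quotients $\bar G:=G_i/N_i$ are canonically identified and the conjugation actions of $\bar G$ on the $N_i$ are matched; this last point is exactly the content of isoclinism, since for $n\in N_i$ and a lift $\tilde g$ of $\bar g$ one has $\tilde g\,n\,\tilde g^{-1}=[\tilde g,n]\cdot n$ with $[\tilde g,n]\in[G_i,G_i]\subseteq N_i$, a pairing recorded by $\theta$ and $\phi$.

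The engine is the standard diagonalization of the regular representation along $N_i$. Write $K_i:=k(x_g:g\in G_i)$. As an $N_i$-module, $\bigoplus_g k\,x_g$ is a sum of $[G_i:N_i]$ copies of the regular representation of $N_i$; since $\exp N_i\mid 9$ and $\zeta_9\in k$ we have $\zeta_{\exp N_i}\in k$, so this splits into $N_i$-eigenlines. Hence $K_i^{N_i}=k(e_{\chi,c})$ is rational, with eigencoordinates indexed by pairs $(\chi,c)$ where $\chi\in\hat N_i$ and $c$ runs over the cosets of $N_i$ in $G_i$. The induced action of $\bar G$ on $K_i^{N_i}$ is monomial: a lift $g$ sends $e_{\chi,c}$ to a root-of-unity multiple of $e_{g\cdot\chi,\,g\cdot c}$, where the permutation of indices is the (regular) left action on cosets together with the dual conjugation on $\hat N_i$, and the scalar is a value of the $2$-cocycle of $1\to N_i\to G_i\to\bar G\to 1$, lying in $\mu_{\exp N_i}\subseteq k^\times$. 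Finally $k(G_i)=(K_i^{N_i})^{\bar G}$.

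Now comes the comparison. By the choices above the permutation part of the monomial action is literally the same for $i=1,2$, so the two $\bar G$-actions on the identified rational field $K_1^{N_1}\cong K_2^{N_2}$ share one underlying $\bar G$-lattice and differ only by their systems of root-of-unity scalars. I would then rescale the eigencoordinates by $e_{\chi,c}\mapsto\lambda_{\chi,c}\,e_{\chi,c}$ with suitable $\lambda_{\chi,c}\in k^\times$ so as to absorb the difference; such a rescaling is a $k$-automorphism of the rational field intertwining the two $\bar G$-actions, whence $(K_1^{N_1})^{\bar G}\cong_k(K_2^{N_2})^{\bar G}$, i.e. $k(G_1)\cong_k k(G_2)$.

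The hard part will be the last step: showing that the two scalar systems differ by a coboundary that can be realized with $\lambda_{\chi,c}\in k^\times$, equivalently that the two extension cocycles become cohomologous once they are regarded as $\mu_9$-valued. This is precisely where the groups genuinely differ: for $p=3$ the family $\Phi_{10}$ consists of the three groups $G(243,28),G(243,29),G(243,30)$, where $G(243,28)$ has exponent $3$ while the other two have exponent $9$, and the parameter $a_r$ changes the class further. The content to be verified is that the symmetric, power-map part of the extension class, which records whether a generator lifts to an element of order $p$ or of order $p^2$, contributes only a coboundary to the root-of-unity twist when $9$-th roots of unity are available, so that the scaling factors $\lambda_{\chi,c}$ can be chosen $\bar G$-equivariantly. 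I expect to settle this by an explicit cocycle computation on the fixed presentations (only finitely many pairs occur), after which the rationality bookkeeping for the monomial $\bar G$-action is routine.
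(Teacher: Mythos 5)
Your overall skeleton---pass to invariants of the abelian normal subgroup $N=\langle f_3,f_4,f_5\rangle$, observe that the residual $\bar G=G/N\simeq C_3\times C_3$ actions for the groups in $\Phi_{10}$ have the same ``permutation part'' and differ only by root-of-unity scalars, then kill that difference---is indeed the skeleton of the paper's proof. But the step you yourself flag as the hard part is not merely unfinished: as you formulate it, it fails. Two distinct problems. First, a bookkeeping one: $K_i^{N_i}$ is \emph{not} $k(e_{\chi,c})$; the $e_{\chi,c}$ are only semi-invariants, and one must pass to a lattice of $N_i$-invariant monomials in them (this is exactly what the paper's explicit $z_1,\dots,z_9$, certified by the determinant-$27$ computation, accomplish). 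Second, and fatally, constant multipliers $\lambda_{\chi,c}\in k^\times$ can never absorb the scalar discrepancy. Upstairs, on the eigencoordinates themselves, the monomial matrix groups are \emph{faithful} copies of $G_1$ and $G_2$, so no linear intertwiner of any kind can exist (it would force $G_1\simeq G_2$). Downstairs, on the lattice, your rescaling induces a constant rescaling of the $z_j$'s, and one can check directly on the paper's formulas that this cannot work: the three actions agree on $z_1,\dots,z_8$ and differ on $z_9$ by the character $(f_1,f_2)\mapsto(m_1^2,1)$ of $\bar G$, where the class of $z_9$ modulo the sublattice spanned by $z_1,\dots,z_8$ is $\bar G$-fixed; any constants $\lambda_1,\dots,\lambda_9$ whose coboundary is trivial on $z_1,\dots,z_8$ (forced, since the actions already agree there) satisfy $\lambda_1=\lambda_2=\lambda_3=\lambda_4$, hence produce the trivial change $\lambda_4/\lambda_1=1$ on the $z_9$-component as well. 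So a nontrivial constant character is never a coboundary of constants, and your $\lambda_{\chi,c}$ do not exist.

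There are two ways to close this gap, and the paper takes the second. (a) Allow non-constant multipliers: since $\bar G$ acts faithfully on $L=k(z_1,\dots,z_8)$, the extension $L/L^{\bar G}$ is Galois with group $\bar G$, so $H^1(\bar G,L^\times)=0$ by Noether's form of Hilbert's Theorem 90; a constant character of $\bar G$ is a $1$-cocycle, hence of the form $\sigma\mapsto\sigma(\lambda)/\lambda$ for some $\lambda\in L^\times$, and $z_9\mapsto\lambda z_9$ then intertwines the two actions. (b) The paper's route avoids intertwining altogether: for each of $G(28),G(29),G(30)$ the action on $z_9$ has the shape $z_9\mapsto a_\sigma z_9$ with $a_\sigma\in L^\times$, so Theorem \ref{t2.4} splits $z_9$ off, giving $k(z_1,\dots,z_9)^{G(i)}=k(z_1,\dots,z_8)^{\langle f_1,f_2\rangle}(f)$; together with Theorem \ref{t2.3} (which makes $k(G(i))$ purely transcendental over $k(V_i)^{G(i)}$), all three fields $k(G(i))$ are purely transcendental, of the same transcendence degree, over the \emph{one} field $k(z_1,\dots,z_8)^{\langle f_1,f_2\rangle}$, on which the three actions literally coincide---hence they are mutually $k$-isomorphic. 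Your cohomological intuition about why $\zeta_9\in k$ matters (the extension class dies when pushed from $\mu_3$ into $\mu_9$) is pointing in the right direction, but it must be converted into one of these two concrete mechanisms; as written, the proposal stops exactly where the content of the proposition lies.
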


We do not know whether $k(G)$ is $k$-rational if $G$ belongs to
$\Phi_7$. In our attempt to solve the case of groups in $\Phi_7$,
the situation is very similar to that of $\Phi_5$; the difference
of these two cases looks almost ``negligible". We did reach at
false proof for groups in $\Phi_7$ several times. But the
difficulty is not overcome anyhow.

It is possible to prove the rationality for many groups of order
$p^5$ (where $p \ge 5$) by the same method if $G$ doesn't belong
to the isoclinism family $\Phi_{5}$, $\Phi_{6}$, $\Phi_{7}$, or $\Phi_{10}$.


\bigskip
We explain briefly the idea of proving the above theorem. There
are $67$ groups of order $243$ in total. Except for the $3$ groups
which belong to $\Phi_{10}$, the $k$-rationality of $k(G)$ for
many groups $G$ may be obtained from the rationality criteria
given in Section 2. Indeed, $G$ belongs to $\Phi_1$ if and only if
$G$ is abelian, and hence $k(G)$ is $k$-rational in this case by
Theorem \ref{t2.2}. If $G$ belongs to $\Phi_2$, $\Phi_3$, $\Phi_4$
or $\Phi_9$, then there exists a normal abelian subgroup $N$ of
$G$ such that $G/N$ is cyclic of order $3$. Then $k(G)$ is
$k$-rational by Theorem \ref{t2.5}. If $G$ belongs to $\Phi_8$,
then there exists a normal cyclic subgroup $N$ of $G$ such that
$G/N$ is cyclic of order $9$. Hence $k(G)$ is $k$-rational by
Theorem \ref{tKa2}.

There remain $14$ groups $G$ in total which belong to $\Phi_5$,
$\Phi_6$ or $\Phi_7$, for which the $k$-rationality of $k(G)$
should be studied further. In studying the rationality problem of
these groups, new technical difficulties (different from the
situations in \cite{CK}, \cite{CHKP}, \cite{CHKK}) arise.
Fortunately we are able to discover new methods to solve these
difficulties (see Step 4 of Case 1 in Section 4, Steps 3 and 5 of
Case 1 in Section 5). We hope these methods will be useful for
other rationality problems.

This paper is organized as follows. We recall several rationality
criteria in Section 2. In Section 3, we use the database of groups
of order 243 in GAP and list generators and relations for $17$
groups which belong to $\Phi_5$, $\Phi_6$, $\Phi_7$ or
$\Phi_{10}$. We also exhibit a faithful representation of these
groups $G$ for which the rationality of $k(G)$ will be discussed
later. These faithful representations of $G$ are obtained as the
induced representations of some abelian normal subgroups of $G$ of
index $27$.
Section 4 and Section 5 consist of the proof of Theorem
\ref{thmain1} for $7$ groups $G(243,i)$, $3 \leq i\leq 9$, which
belong to $\Phi_6$ and for $2$ groups $G(243,65)$, $G(243,66)$
which belong to $\Phi_5$ respectively. The proof of Theorem
\ref{thmain1}, Proposition \ref{p1.15} and Proposition \ref{p1.16}
will be given in Section 6.

\begin{idef}{Standing Notations}
Throughout this paper, $k(x_1,\ldots,x_n)$ will be rational
function fields of $n$ variables over $k$.

We denote by $\zeta_n$ a primitive $n$-th root of unity. Whenever we
write $\zeta_n\in k$, it is understood that either $\fn{char}k=0$ or
$\fn{char}k>0$ with $\gcd \{n,\fn{char}k\}=1$. We always write
$\zeta$ for $\zeta_3$ for simplicity and $\eta$ for a primitive 9th
root of unity satisfying $\eta^3=\zeta$.

$I_n$ denotes the $n \times n$ identity matrix. If $G$ is a group,
$Z(G)$ denotes the center of $G$. If $g, h \in G$, define
$[g,h]=g^{-1}h^{-1}gh$. The exponent of a group $G$ is defined as
$\fn{lcm}\{\fn{ord}(g):g\in G\}$ where $\fn{ord}(g)$ is the order of
the element $g$. All the groups in this article are finite groups.
For emphasis, recall the definition $k(G)=k(x_g:g\in G)^G$ which was
defined in the first paragraph of this section. The group
$G(243,i)$, or $G(i)$ for short, is the $i$-th group of order 243 in
the GAP database. The version of GAP used in this paper is GAP4,
Version: 4.4.10 \cite{GAP}.
\end{idef}

\section{Preliminaries}

In this section, we record several results which will be used later.

\begin{theorem}[{\cite[Theorem 1]{HK}}] \label{t2.3}
Let $G$ be a finite group acting on $L(x_1,\ldots,x_n)$,
the rational function field of $n$ variables over a field $L$.
Suppose that\\
{\rm (i)} for any $\sigma \in G$, $\sigma(L)\subset L$;\\
{\rm (ii)} the restriction of the action of $G$ to $L$ is faithful; and\\
{\rm (iii)} for any $\sigma \in G$,
\[
\begin{pmatrix} \sigma (x_1) \\ \sigma (x_2) \\ \vdots \\ \sigma (x_n) \end{pmatrix}
= A(\sigma)\cdot \begin{pmatrix} x_1 \\ x_2 \\ \vdots \\ x_n \end{pmatrix} +B(\sigma)
\]
where $A(\sigma)\in GL_n(L)$ and $B(\sigma)$ is an $n\times 1$ matrix over $L$.\\
Then there exist $z_1,\ldots,z_n\in L(x_1,\ldots,x_n)$ such
that $L(x_1,\ldots,x_n)=L(z_1,\ldots,z_n)$
and $\sigma(z_i)=z_i$ for any $\sigma\in G$, any $1\le i\le n$.
\end{theorem}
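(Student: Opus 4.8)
The plan is to reinterpret the affine $G$-action as a \emph{semilinear} representation on a finite-dimensional vector space and then apply Galois descent. First I would introduce the $(n+1)$-dimensional $L$-vector space
\[
W = L\cdot 1 \oplus L x_1 \oplus \cdots \oplus L x_n \subset L(x_1,\ldots,x_n),
\]
and check that conditions (i) and (iii) make $W$ stable under $G$: for $\sigma \in G$ and $c_0,\ldots,c_n \in L$ we have $\sigma(c_0 + \sum_i c_i x_i) = \sigma(c_0) + \sum_i \sigma(c_i)\,\sigma(x_i)$, and each $\sigma(x_i)$ lies in $W$ by (iii), while $\sigma(1)=1$. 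Thus $G$ acts on $W$ semilinearly with respect to its action on $L$, with the constant function $1$ as a distinguished fixed vector.

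The essential input is condition (ii). Since $G$ acts faithfully on the field $L$, Artin's lemma shows that $L/L^G$ is a finite Galois extension with Galois group $G$. I would then invoke Speiser's theorem, the $GL$-version of Hilbert's Theorem 90, namely $H^1(G, GL_{n+1}(L)) = 1$: any finite-dimensional $L$-vector space carrying a compatible semilinear $G$-action admits an $L$-basis of $G$-fixed vectors, and more precisely the canonical map $L \otimes_{L^G} W^G \to W$ is an isomorphism, so $\dim_{L^G} W^G = \dim_L W = n+1$.

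With descent established, the conclusion is linear algebra. Because $1 \in W^G$, I would extend it to an $L^G$-basis $1, z_1, \ldots, z_n$ of $W^G$; each $z_i$ is then a $G$-invariant element of $L(x_1,\ldots,x_n)$. Since $L \otimes_{L^G} W^G \cong W$, this family is simultaneously an $L$-basis of $W$. Comparing it with the $L$-basis $1, x_1, \ldots, x_n$ through an invertible matrix over $L$ expresses each $x_i$ as an $L$-linear combination of $1, z_1, \ldots, z_n$, whence $x_i \in L(z_1,\ldots,z_n)$ and $L(x_1,\ldots,x_n) = L(z_1,\ldots,z_n)$, with every $z_i$ fixed by $G$, as required.

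The main obstacle is the passage from the semilinear (cocycle) data to an invariant basis, i.e.\ Speiser's theorem itself; everything else is bookkeeping. The one point to handle with care is that the translation terms $B(\sigma)$ do not destroy semilinearity: they are absorbed precisely because the constant $1$ is adjoined to $W$ and is itself fixed, converting the affine action into a genuine linear (semilinear) action on the enlarged space $W$.
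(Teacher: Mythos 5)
Your proof is correct and follows essentially the same route as the proof in the cited source [HK, Theorem 1] (the paper itself only quotes this result): absorb the translation terms $B(\sigma)$ by passing to the $(n+1)$-dimensional space spanned by $1,x_1,\ldots,x_n$, note the action is semilinear over the Galois extension $L/L^G$ furnished by Artin's lemma, apply Speiser's theorem ($H^1(G,GL_{n+1}(L))=1$) to get an invariant basis, and extend the fixed vector $1$ to such a basis. No gaps; this is the standard descent argument for this theorem.
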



\begin{theorem}[{\cite[Theorem $1'$]{HK}}]\label{thHK}
Let $G$ be a finite group acting on $L(x_1,\ldots,x_m)$,
the rational function field of $m$ variables over a field $L$.
Suppose that\\
{\rm (i)} for any $\sigma\in G$, $\sigma(L)\subset L$;\\
{\rm (ii)} the restriction of the actions of $G$ to $L$ is faithful; and\\
{\rm (iii)} for any $\sigma\in G$,
\begin{align*}
\left(
\begin{array}{c}
\sigma(x_1)\\ \sigma(x_2)\\ \vdots\\ \sigma(x_m)
\end{array}
\right)
=A(\sigma)
\left(
\begin{array}{c}
x_1\\ x_2\\ \vdots\\ x_m
\end{array}
\right)
\end{align*}
where $A(\sigma)\in GL_m(L)$.

Then $G$ acts on $L(x_1/x_m,\ldots,x_{m-1}/x_m)$ in the natural way.
Moreover, there exist $z_1,\ldots,z_m\in L(x_1,\ldots,x_m)$ such that
$L(x_1/x_m,\ldots,x_{m-1}/x_m)=L(z_1/z_m,\ldots,z_{m-1}/z_m)$
and $\sigma(z_i/z_m)=z_i/z_m$ for any $\sigma\in G$, and $1\leq i\leq m-1$.
In fact, $z_1,\ldots,z_m$ can be defined
\begin{align*}
z_j:=\sum_{i=1}^m \alpha_{ij}x_i,\quad {\rm for}\quad 1\leq j\leq m
\end{align*}
where $(\alpha_{ij})_{1\leq i,j\leq m}\in GL_m(L)$.
\end{theorem}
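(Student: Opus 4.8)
The plan is to recognize the statement as an instance of Galois descent for vector spaces (Speiser's theorem, equivalently Hilbert's Theorem 90 for $GL_m$), applied to the $L$-space of linear forms. The feature of hypothesis (iii) that makes everything work is its homogeneity: there is no translation term $B(\sigma)$ as in Theorem \ref{t2.3}, which is precisely what keeps the space of linear forms invariant and lets the conclusion descend to the projective coordinates $x_i/x_m$. First I would pass to the Galois setting. Put $k:=L^G$. By hypothesis (ii) the finite group $G$ acts faithfully on $L$, so by Artin's lemma $L/k$ is a finite Galois extension with $\fn{Gal}(L/k)=G$ and $[L:k]=|G|$. Thus $G$ is literally the full Galois group of $L/k$, which is what is needed to invoke descent.

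Next I would set up the semilinear representation. Let $W:=Lx_1+\cdots+Lx_m\subseteq L(x_1,\ldots,x_m)$ be the $L$-vector space of linear forms; since $x_1,\ldots,x_m$ are algebraically independent over $L$ they are in particular $L$-linearly independent, so $\dim_L W=m$. Hypothesis (iii) says that for each $\sigma\in G$ we have $\sigma(x_i)=\sum_\ell A(\sigma)_{i\ell}\,x_\ell\in W$, and since $\sigma$ acts on the coefficients through its action on $L$, the induced map on $W$ is $\sigma$-semilinear, i.e. $\sigma(c\,w)=\sigma(c)\,\sigma(w)$ for $c\in L$ and $w\in W$; compatibility with composition is inherited from the $G$-action on the function field. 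Now I would apply Speiser's theorem: for an $L$-vector space $W$ carrying a semilinear action of $\fn{Gal}(L/k)$, the natural map $L\otimes_k W^G\to W$ is an isomorphism, where $W^G=\{w\in W:\sigma(w)=w\text{ for all }\sigma\in G\}$. In particular $\dim_k W^G=m$, and any $k$-basis $z_1,\ldots,z_m$ of $W^G$ is simultaneously an $L$-basis of $W$. Writing $z_j=\sum_i\alpha_{ij}x_i$ with $\alpha_{ij}\in L$, the fact that the $z_j$ form an $L$-basis of $W$ forces $(\alpha_{ij})\in GL_m(L)$, which is the asserted shape of the $z_j$. Each $z_j$ is $G$-fixed, so in particular $\sigma(z_i/z_m)=\sigma(z_i)/\sigma(z_m)=z_i/z_m$ for all $\sigma$ and all $i$.

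Finally I would identify the two ratio fields. With respect to the grading $\deg x_i=1$ (coefficients in $L$ of degree $0$), the subfield $L(x_1/x_m,\ldots,x_{m-1}/x_m)$ is exactly the degree-zero part of $L(x_1,\ldots,x_m)$, namely the ratios $f/g$ of forms of equal degree. Since the $z_j$ are homogeneous linear in the $x_i$ and $(\alpha_{ij})$ is invertible, the same degree-zero subfield is generated by $z_1/z_m,\ldots,z_{m-1}/z_m$, giving $L(x_1/x_m,\ldots,x_{m-1}/x_m)=L(z_1/z_m,\ldots,z_{m-1}/z_m)$; that $G$ preserves this subfield and acts on it naturally is immediate, as each ratio of linear forms is sent to a ratio of linear forms.

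The only real content is the descent step; everything else is bookkeeping, so I expect that to be where the care is needed. The point to watch is that (iii) is homogeneous, which is what makes $W$ invariant and the action semilinear, and ultimately what allows the projective phrasing of the conclusion. If an inhomogeneous term $B(\sigma)$ were present one would be in the situation of Theorem \ref{t2.3} and would additionally need the additive form of Hilbert~90 to absorb it; here that complication does not arise.
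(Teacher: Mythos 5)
Your proof is correct: Artin's lemma makes $L/L^G$ Galois with group $G$, hypothesis (iii) makes the space $W$ of linear forms a $G$-stable $L$-space with semilinear action, Speiser's theorem (Hilbert 90 for $GL_m$) produces a $G$-fixed $L$-basis $z_1,\ldots,z_m$ with invertible coefficient matrix, and the degree-zero identification of the two ratio fields is sound. Note that the paper itself gives no proof of this statement --- it is quoted from \cite[Theorem $1'$]{HK} --- and your Galois-descent argument is essentially the standard proof of that cited result, so there is nothing to correct and no genuinely different route to compare.
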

\begin{lemma}\label{lem4}
Let $G=\langle\sigma_1,\sigma_2\rangle\simeq C_3\times C_3$ act on
the rational function field $L(X,Y)$ with two variables $X,Y$ over $L$.
Suppose that \\
{\rm (i)} for any $\sigma\in G$, $\sigma(L)\subset L$;\\
{\rm (ii)} the restriction of the actions of $G$ to $L$ is faithful; and\\
{\rm (iii)} $G$ acts on $L(X,Y)$ by
\begin{align*}
\sigma_1 : X\mapsto Y\mapsto \frac{1}{XY},\quad
\sigma_2 : X\mapsto \frac{a}{\sigma_1(a)}X,
Y\mapsto \frac{\sigma_1(a)}{\sigma_1^2(a)} Y
\end{align*}
where $a\in L$ satisfies $a\cdot\sigma_2(a)\cdot\sigma_2^2(a)=1$.
Then there exist $Z,W\in L(X,Y)$ such that $L(X,Y)=L(Z,W)$ and
$\sigma(Z)=Z, \sigma(W)=W$ for any $\sigma\in G$.
\end{lemma}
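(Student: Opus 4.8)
The plan is to linearize the monomial part of the action by introducing an auxiliary homogeneous variable, and then to quote Theorem~\ref{thHK}. The point is that the desired conclusion---two $G$-invariants $Z,W$ with $L(X,Y)=L(Z,W)$---is exactly what Theorem~\ref{thHK} delivers once the action is presented as a \emph{linear} action on a rational function field in three variables whose degree-zero subfield is $L(X,Y)$. So the whole problem reduces to producing such a linear model.

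First I would set $X=s_0/s_2$ and $Y=s_1/s_0$ in three new indeterminates $s_0,s_1,s_2$, so that $s_1/s_2=XY$ and hence $L(X,Y)=L(s_0/s_2,s_1/s_2)$. With this choice the cyclic rule $\sigma_1\colon X\mapsto Y\mapsto 1/(XY)$ is reproduced by the permutation $\sigma_1(s_i)=s_{i+1}$ (indices mod $3$). For $\sigma_2$ I would take the diagonal action $\sigma_2(s_i)=\sigma_1^{\,i+1}(a)^{-1}s_i$; a direct check gives $\sigma_2(s_0/s_2)=(a/\sigma_1(a))\,(s_0/s_2)$ and $\sigma_2(s_1/s_0)=(\sigma_1(a)/\sigma_1^2(a))\,(s_1/s_0)$, matching hypothesis~(iii). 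Because the diagonal entries form a single $\sigma_1$-orbit, i.e.\ $c_i:=\sigma_1^{\,i+1}(a)^{-1}$ satisfies $c_i=\sigma_1(c_{i-1})$, one gets $\sigma_1\sigma_2=\sigma_2\sigma_1$ on the $s_i$.

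The crux---and the step I expect to be the real obstacle---is to verify that these formulas define a \emph{genuine} action of $C_3\times C_3$ and not merely a projective one. Linearizing a monomial action amounts to lifting through $GL_3(L)\to PGL_3(L)$, and an arbitrary lift of $\sigma_2$ would only satisfy $\sigma_2^3=(\text{scalar})\cdot\mathrm{id}$, the scalar being precisely the cohomological obstruction. Here the explicit choice above is engineered so that, using that $\sigma_1,\sigma_2$ commute on $L$,
$$\sigma_2^3(s_i)=\big(\sigma_1^{\,i+1}(a)\,\sigma_2\sigma_1^{\,i+1}(a)\,\sigma_2^2\sigma_1^{\,i+1}(a)\big)^{-1}s_i=\sigma_1^{\,i+1}\!\big(a\,\sigma_2(a)\,\sigma_2^2(a)\big)^{-1}s_i,$$
which equals $s_i$ exactly because of the hypothesis $a\,\sigma_2(a)\,\sigma_2^2(a)=1$. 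Together with $\sigma_1^3=\mathrm{id}$ and the commutativity noted above, this shows we have a linear action of $G=C_3\times C_3$ on $L(s_0,s_1,s_2)$ whose restriction to $L$ is the original (hence faithful) one.

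Finally I would apply Theorem~\ref{thHK} with $m=3$ to this action. It yields linear forms $z_1,z_2,z_3$ in $s_0,s_1,s_2$ with $L(s_0/s_2,s_1/s_2)=L(z_1/z_3,z_2/z_3)$ and $\sigma(z_i/z_3)=z_i/z_3$ for all $\sigma\in G$. Since $L(s_0/s_2,s_1/s_2)=L(X,Y)$, putting $Z=z_1/z_3$ and $W=z_2/z_3$ gives $L(X,Y)=L(Z,W)$ with $Z,W$ fixed by $G$, as required. It is worth noting that this route avoids Hilbert~$90$ and uses no root of unity in $L$: the entire weight of the hypotheses is absorbed into making the three-variable lift an honest group action.
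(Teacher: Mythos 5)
Your proof is correct and takes essentially the same route as the paper: the paper also linearizes the monomial action by letting $G$ act on $L(x_1,x_2,x_3)$ via $\sigma_1\colon x_1\mapsto x_2\mapsto x_3\mapsto x_1$ and $\sigma_2\colon x_i\mapsto \sigma_1^{i-1}(a)\,x_i$, checks that $\sigma_2^3=\mathrm{id}$ and $\sigma_1\sigma_2=\sigma_2\sigma_1$ using $a\,\sigma_2(a)\,\sigma_2^2(a)=1$ and commutativity on $L$, sets $X=x_1/x_2$, $Y=x_2/x_3$, and then applies Theorem~\ref{thHK}. Your lift is the same one up to replacing each variable by its reciprocal (your $s_0,s_1,s_2$ correspond to $1/x_2,1/x_3,1/x_1$), so the difference is purely cosmetic.
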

\begin{proof}
We consider the action of $G$ on the rational function field
$L(x_1,x_2,x_3)$ with three variables $x_1,x_2,x_3$ over $L$ by
\begin{align*}
\sigma_1 : x_1\mapsto x_2\mapsto x_3\mapsto x_1,\quad
\sigma_2 : x_1\mapsto ax_1, x_2\mapsto \sigma_1(a)x_2,
x_3\mapsto \sigma_1^2(a)x_3.
\end{align*}
Then $\sigma_2^3(x_i)=x_i$ for $i=1,2,3$ because
$a\cdot\sigma_2(a)\cdot\sigma_2^2(a)=1$. Note that $\sigma_1
\sigma_2= \sigma_2 \sigma_1$.

Define $X=x_1/x_2$, $Y=x_2/x_3$. Then $G$ acts on $L(X,Y)$ as in
(iii). Apply Theorem \ref{thHK}. There exist $Z,W\in L(X,Y)$ such
that $L(X,Y)=L(Z,W)$ and $\sigma(Z)=Z, \sigma(W)=W$ for any
$\sigma\in G$ where $Z=z_1/z_2$, $W=z_2/z_3$ and $z_1,z_2,z_3$ are
as in Theorem \ref{thHK}.
\end{proof}

\begin{theorem}[{\cite[Theorem 3.1]{AHK}}] \label{t2.4}
Let $L$ be any field, $L(x)$ be the rational function field of one
variable over $L$, $G$ be a finite group acting on $L(x)$. Suppose
that, for any $\sigma \in G$, $\sigma(L)\subset L$ and
$\sigma(x)=a_\sigma x+b_\sigma$ where $a_\sigma, b_\sigma \in L$
and $a_\sigma\ne 0$. Then $L(x)^G=L^G(f)$ for some polynomial
$f\in L[x]$. In fact, if $m=\min \{\deg g(x):g(x)\in
L[x]^G\setminus L\}$, any polynomial $f\in L[x]^G$ with $\deg f=m$
satisfies the property $L(x)^G=L^G(f)$.
\end{theorem}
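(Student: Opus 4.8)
The plan is to reduce the whole statement to the single numerical identity $m=|H|$, where $H\subseteq G$ is the subgroup acting trivially on $L$, and then to construct an invariant polynomial of that minimal degree. First I would reduce to the case in which $G$ acts faithfully on $L(x)$: replacing $G$ by its image in $\fn{Aut}(L(x))$ alters neither $L(x)^G$ nor $L^G$, since the kernel of this action fixes $L(x)$, hence $L$, pointwise. Put $H=\ker(G\to\fn{Aut}(L))$, a normal subgroup; by Artin's theorem $L/L^G$ is Galois of degree $d:=[L:L^G]=[G:H]$. Observe that, because $a_\sigma\neq 0$, every $\sigma\in G$ carries $L[x]$ into itself and preserves the $x$-degree; in particular $L^G(f)\subseteq L(x)^G$ for each $G$-invariant $f\in L[x]$, and the norm $\prod_{\sigma\in G}\sigma(x)$ is a non-constant invariant polynomial, so the minimal degree $m$ is well defined.

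Next comes a degree count. For any non-constant $f\in L[x]^G$ of degree $n$, the element $f$ is transcendental over $L$, whence $[L(x):L(f)]=n$, while adjoining the transcendental $f$ to $L\supseteq L^G$ gives $[L(f):L^G(f)]=[L:L^G]=d$; therefore $[L(x):L^G(f)]=nd$. On the other hand $[L(x):L(x)^G]=|G|=d\,|H|$ by Artin. Since $L^G(f)\subseteq L(x)^G$, we obtain $nd\ge d\,|H|$, i.e. $n\ge|H|$, with equality if and only if $L^G(f)=L(x)^G$. Thus it suffices to prove $m=|H|$: the bound $m\ge|H|$ is automatic, and once a single $G$-invariant polynomial of degree exactly $|H|$ is produced, $m=|H|$ follows and every invariant polynomial of this (minimal) degree generates $L(x)^G$ over $L^G$.

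The construction of such a polynomial is the step I expect to be the main obstacle. Set $g:=\prod_{\sigma\in H}\sigma(x)=\prod_{\sigma\in H}(a_\sigma x+b_\sigma)$. Left multiplication by $H$ permutes the factors, so $g$ is $H$-invariant of degree $|H|$; comparing $[L(x):L(g)]=|H|=[L(x):L(x)^H]$ gives $L(x)^H=L(g)$, and a coprimality argument upgrades this to $L[x]^H=L[g]$. Since $H$ is normal, each $\tau\in G$ sends $g$ to $\prod_{\rho\in\tau H}\rho(x)$, again an $H$-invariant polynomial of degree $|H|$, hence an element of $L[g]$ of $g$-degree $1$; that is, $\tau(g)=\alpha_{\bar\tau}\,g+\beta_{\bar\tau}$ with $\alpha_{\bar\tau}\in L^\times$ and $\beta_{\bar\tau}\in L$. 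Consequently $\bar G:=G/H$ acts on $L(g)$ by affine substitutions in $g$ while acting faithfully on $L$. I would then view the $2$-dimensional $L$-space $V=L\,g\oplus L$ of polynomials of $g$-degree $\le 1$; the $\bar G$-action on $V$ is semilinear over the Galois extension $L/L^G$, so by Galois descent (Speiser's form of Hilbert's Theorem 90) one has $V\cong L\otimes_{L^G}V^{\bar G}$ with $\dim_{L^G}V^{\bar G}=2$. As the fixed vectors with vanishing $g$-coefficient form only the line $L^G\cdot 1$, some fixed vector $f_0=u\,g+v$ has $u\neq 0$; this $f_0$ is $G$-invariant of degree $|H|$. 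Equivalently, one solves $\alpha_{\bar\tau}=u/\bar\tau(u)$ by multiplicative Hilbert 90 (the $\alpha_{\bar\tau}$ form a $1$-cocycle) and then determines $v$ by additive Hilbert 90; checking these cocycle identities is the only genuinely computational point, and with $f_0$ in hand the theorem follows from the degree count above.
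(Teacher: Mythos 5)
Your proof is correct, but a preliminary remark is needed: this paper never proves Theorem \ref{t2.4} at all --- it is quoted from \cite[Theorem 3.1]{AHK} --- so the only possible comparison is with the argument in that source, which is essentially Miyata's and is much more elementary than yours. There one first notes that any $h\in L(x)^G$, written as $p/q$ with $p,q\in L[x]$ coprime, satisfies $\sigma(p)=c_\sigma p$ and $\sigma(q)=c_\sigma q$ for a common $c_\sigma\in L^{\times}$ (degree preservation plus coprimality), so multiplying numerator and denominator by $\prod_{\sigma\neq 1}\sigma(q)$ writes $h$ as a quotient of two invariant polynomials; second, for $f\in L[x]^G$ of minimal degree $m$ and any $h\in L[x]^G$, the Euclidean division $h=Af+B$ with $\deg B<m$ is preserved by every $\sigma$, so uniqueness of quotient and remainder gives $A,B\in L[x]^G$, whence $B\in L^G$ by minimality of $m$ and $A\in L^G[f]$ by induction on degree; together these yield $L[x]^G=L^G[f]$ and $L(x)^G=L^G(f)$ with no Galois theory whatsoever. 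Your route is genuinely different: you reduce the whole theorem to the identity $m=|H|$, where $H=\ker(G\to\operatorname{Aut}(L))$, via Artin's theorem and a degree count, and then manufacture an invariant polynomial of degree $|H|$ from the $H$-norm $g=\prod_{\sigma\in H}\sigma(x)$ using multiplicative and additive Hilbert 90 (equivalently, Galois descent on the two-dimensional space $Lg\oplus L$). I checked the delicate points --- normality of $H$, the equality $L(x)^H=L(g)$, the coprimality upgrade to $L[x]^H=L[g]$, the cocycle identities for $\alpha_{\bar\tau}$ and $\beta_{\bar\tau}$, and the fact that a descent-fixed vector must have nonzero $g$-coefficient --- and they all hold, so your argument is a complete proof. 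What it buys is the explicit formula $m=|H|$ for the minimal degree, which the elementary proof recovers only a posteriori by the same degree count; what it costs is substantially heavier machinery (Artin's theorem, linear disjointness, Speiser/Hilbert 90) for a statement whose known proof is half a page of polynomial arithmetic.
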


\begin{theorem}[{\cite[Theorem 1.4]{Ka1}}] \label{t2.5}
Let $k$ be a field, $G$ be a finite group.
Assume that\\
{\rm (i)} $G$ contains an abelian normal subgroup $H$ such that $G/H$
is cyclic of order $n$;\\
{\rm (ii)} $\bm{Z}[\zeta_n]$ is a unique factorization domain; and\\
{\rm (iii)} $k$ contains a primitive $e$-th root of unity
where $e$ is the exponent of $G$.\\
If $G\to GL(V)$ is any finite-dimensional representation of $G$
over $k$, then $k(V)^G$ is $k$-rational.
\end{theorem}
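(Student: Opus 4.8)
The plan is to reduce the problem, by stages, to the rationality of a monomial action of the cyclic quotient $G/H\cong C_n$, and then to linearize that action using hypotheses (ii) and (iii). Since $k$ contains $\zeta_e$ and $e$ is a multiple of the exponent of $H$, the restriction $V|_H$ splits as a sum of one-dimensional characters. First I would choose coordinates $x_1,\dots,x_m$ on $V$ that are simultaneous $H$-eigenvectors, so that $h\cdot x_i=\chi_i(h)\,x_i$ for characters $\chi_i\in\fn{Hom}(H,k^{\times})$. Because $H\triangleleft G$, conjugation by a fixed lift $\sigma\in G$ of a generator of $G/H$ permutes the characters $\chi_i$; hence $\sigma$ sends each $x_i$ to a nonzero scalar multiple of some $x_{\tau(i)}$. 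Thus $G$ acts on $k(V)=k(x_1,\dots,x_m)$ by monomial substitutions, with $H$ diagonal and $G/H$ permuting the variables up to scalars in $k^{\times}$ (in fact up to $e$-th roots of unity).

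Next I would pass to $H$-invariants, using $k(V)^G=(k(V)^H)^{G/H}$. Writing $M=\bigoplus_i\bm{Z}\,x_i\cong\bm{Z}^m$ for the lattice of Laurent monomials and $\rho\colon M\to\fn{Hom}(H,k^{\times})$ for the map $x_i\mapsto\chi_i$, the $H$-invariants are the monomials in $M_0:=\ker\rho$. As $\fn{Hom}(H,k^{\times})$ is finite, $M_0$ has rank $m$; choosing a $\bm{Z}$-basis $u_1,\dots,u_m$ of $M_0$ gives $k(V)^H=k(u_1,\dots,u_m)$, which is $k$-rational in the spirit of Fischer's Theorem \ref{t2.2}. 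Since $\sigma$ normalizes $H$ it preserves $M_0$, and $\sigma^n\in H$ acts trivially on $k(V)^H$; therefore $G/H=\langle\sigma\rangle\cong C_n$ acts on $k(u_1,\dots,u_m)$ by a monomial action $\sigma(u_j)=a_j\prod_i u_i^{b_{ij}}$ with $a_j\in k^{\times}$ and $B=(b_{ij})\in GL_m(\bm{Z})$ satisfying $B^n=I$. The theorem is thereby reduced to the $k$-rationality of this cyclic monomial action on $k(u_1,\dots,u_m)$.

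The core of the argument is to linearize this monomial $C_n$-action, and this is where hypothesis (ii) enters. The matrix $B$ makes $M_0$ into a $\bm{Z}[C_n]$-lattice, and over $\bm{Q}$ the group algebra splits as $\bm{Q}[C_n]=\prod_{d\mid n}\bm{Q}(\zeta_d)$, so $M_0\otimes_{\bm{Z}}\bm{Q}$ decomposes into isotypic pieces indexed by the divisors $d\mid n$. Because $\bm{Z}[\zeta_n]$ is a UFD, each $\bm{Z}[\zeta_d]$ with $d\mid n$ is a PID (its class number divides that of $\bm{Q}(\zeta_n)$), so every $\bm{Z}[\zeta_d]$-lattice is free; this lets me realize each isotypic piece integrally as a sum of rank-one free $\bm{Z}[\zeta_d]$-modules, on each of which $\sigma$ acts with minimal polynomial $\Phi_d$. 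I would treat each rank-one summand as $\varphi(d)$ variables cyclically acted on by $\sigma$, analyze it as a one-variable problem over the subfield generated by the remaining summands, and apply Theorem \ref{t2.4}; the residual ratios are handled by Theorem \ref{thHK}. Simultaneously the scalar data $(a_j)$ forms a multiplicative $1$-cocycle for $C_n$, and since $k$ contains $\zeta_e$, Hilbert's Theorem 90 in the form of Theorem \ref{t2.3} (with a faithful base field $L$ assembled from part of the variables) absorbs the scalars into a monomial change of coordinates, reducing to a pure permutation-type action. Assembling the summands and invoking Theorem \ref{t2.3} once more produces a $G$-invariant transcendence basis, proving $k(V)^G$ is $k$-rational.

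The main obstacle is precisely this linearization: controlling the $\bm{Z}[C_n]$-module structure of the sublattice $M_0\subset\bm{Z}^m$, which is a general lattice and not a permutation lattice, while simultaneously trivializing the scalar cocycle. The UFD hypothesis on $\bm{Z}[\zeta_n]$ is exactly what forces the cyclotomic pieces to be free of rank one, so that each block collapses to a tractable one-variable (Theorem \ref{t2.4}) or affine (Theorem \ref{t2.3}) problem; without it the integral decomposition can fail and the rationality becomes unclear. By contrast, the diagonalization of $H$ and the passage to $H$-invariants are routine.
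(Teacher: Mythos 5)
First, a point of context: this paper does not prove Theorem \ref{t2.5} at all --- it is quoted from Kang \cite{Ka1}, so the proof you must be measured against is the one in that reference, which combines Endo--Miyata's structure theory of integral representations of cyclic groups with explicit field-theoretic changes of variables. Your opening reductions agree with that proof's skeleton and are fine: hypothesis (iii) lets you diagonalize $H$, the $G$-action on $k(V)$ becomes monomial, $k(V)^H=k(M_0)$ for the lattice $M_0=\ker\rho$ of $H$-invariant monomials, and the problem becomes the $k$-rationality of a monomial action of $C_n=\langle\sigma\rangle$ on $k(M_0)$ with coefficients that are roots of unity in $k$.

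The genuine gap is your linearization step. You claim that, because each $\bm{Z}[\zeta_d]$ ($d\mid n$) is a PID, the isotypic decomposition of $M_0\otimes_{\bm{Z}}\bm{Q}$ over $\bm{Q}[C_n]=\prod_{d\mid n}\bm{Q}(\zeta_d)$ can be ``realized integrally,'' i.e.\ that $M_0$ itself splits into free rank-one $\bm{Z}[\zeta_d]$-blocks. This is false in general: $\bm{Z}[C_n]$-lattices do not split into cyclotomic layers. The simplest counterexample is the regular lattice $\bm{Z}[C_p]$ itself, which is a nonsplit extension of $\bm{Z}$ by $\bm{Z}[\zeta_p]$ and is in fact indecomposable (its reduction modulo $p$ is the module $\bm{Z}/p[y]/(y^p)$ over a local ring, so it admits no nontrivial direct-sum decomposition), even for $p=3,5,7$ where $\bm{Z}[\zeta_p]$ certainly is a PID. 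Hypothesis (ii) makes lattices that already are $\bm{Z}[\zeta_d]$-modules free; it does nothing to split the extensions between layers, and $M_0$, being an essentially arbitrary sublattice of $\bm{Z}^m$, has no reason to be a direct sum of such modules. The charitable reading --- replace $M_0$ by the finite-index sublattice $\bigoplus_{d\mid n}\bigl(M_0\cap(\zeta_d\text{-isotypic part})\bigr)$, which does decompose --- does not repair the argument, because the corresponding field is a proper finite subfield of $k(M_0)$, and rationality of its invariant field implies nothing about $k(M_0)^{C_n}$. What hypothesis (ii) actually buys, by Endo--Miyata, is that every $C_n$-lattice is quasi-permutation (the relevant flabby class vanishes), and converting that much weaker lattice-theoretic input into honest, not merely stable, rationality is exactly the content of the proof in \cite{Ka1}. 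A secondary misstep: even on a genuinely free rank-one $\bm{Z}[\zeta_d]$-block, $\sigma$ acts through the companion matrix of the cyclotomic polynomial $\Phi_d$, e.g.\ $x_1\mapsto x_2\mapsto\cdots\mapsto x_{\varphi(d)}\mapsto 1/(x_1\cdots x_{\varphi(d)})$ when $d$ is prime; this is neither a one-variable affine action, so Theorem \ref{t2.4} does not apply to it, nor a permutation action, and it must instead be confronted with devices like Theorem \ref{t2.6}, Theorem \ref{thHK} or Lemma \ref{l2.8}, which your proposal invokes only vaguely and not at the point where they are needed.
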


\begin{theorem}[{\cite[Theorem 1.8]{Ka2}}] \label{tKa2}
Let $n\geq 3$ and $G$ be a non-abelian $p$-group of order $p^n$
such that $G$ contains a cyclic subgroup of index $p^2$.
Assume that $k$ is any field satisfying that either
{\rm (i)} $\fn{char}k=p>0$, or
{\rm (ii)} $\fn{char}k\neq p$ and $k$ contains a primitive
$p^{n-2}$-th root of unity.
Then $k(G)$ is $k$-rational.
\end{theorem}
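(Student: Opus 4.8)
The plan is to realize $k(G)$ as the fixed field of an explicit low-dimensional monomial action and then to peel off generators one layer at a time. First I would invoke the classification of $p$-groups possessing a cyclic subgroup of index $p^2$ to fix generators and relations: write $C=\langle a\rangle$ for a cyclic normal subgroup of order $p^{n-2}$, and choose $b,c\in G$ whose images generate the order-$p^2$ quotient $\bar G=G/C$, recording the commutators $[a,b],[a,c]\in C$ together with the powers $b^p,c^p,[b,c]\in C$. Because $\zeta_{p^{n-2}}\in k$ (respectively $\fn{char}k=p$), the group $C$ admits a faithful $k$-character $\chi$ with $\chi(a)=\zeta_{p^{n-2}}$, and $V:=\fn{Ind}_C^G\chi$ is a faithful $k$-representation of dimension $p^2$ whose action on $k(V)=k(x_w:w\in\bar G)$ is monomial: $a$ scales each $x_w$ by a root of unity dictated by the commutator relations, while $b,c$ permute the $x_w$ according to the regular action of $\bar G$ and introduce further scalars coming from $b^p,c^p,[b,c]$.

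Second, I would reduce the regular-representation fixed field $k(G)$ to $k(V)^G$. Since the regular representation splits as $k[G]=\fn{Ind}_C^G(k[C])$ and $k[C]=\bigoplus_i\chi^i$ decomposes into one-dimensional pieces once $\zeta_{p^{n-2}}\in k$, the $G$-action on $k(G)$ is, block by block over the cosets of $C$, affine over the subfield generated by the $V$-variables. Repeated application of Theorem~\ref{t2.3} then strips away the surplus blocks and identifies $k(G)$ with $k(V)^G$ up to adjoining generators fixed by $G$, so it suffices to prove that the $p^2$-dimensional monomial fixed field $k(V)^G$ is $k$-rational.

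Third, I would compute $k(V)^G$ along the normal series $C\lhd\langle C,b\rangle\lhd G$. The diagonal subgroup $C$ acts by roots of unity, so by Theorem~\ref{t2.3} I can first pass to a generating set of $C$-invariant monomials in the $x_w$, cutting the effective number of variables and leaving a monomial action of $\bar G$ of order $p^2$. When $\bar G$ is cyclic, this residual action carries an abelian normal subgroup with cyclic quotient and rationality follows from a Theorem~\ref{t2.5}-type argument whenever the relevant cyclotomic ring is well behaved. When $\bar G\cong C_p\times C_p$, I would dehomogenize via Theorem~\ref{thHK}, passing to the ratios $X_j=x_{w_j}/x_{w_{j+1}}$, so that the two commuting order-$p$ operators take the cyclic-shift-plus-scalar shape of Lemma~\ref{lem4}; I would then construct explicit invariants exactly as in that lemma to reach a purely transcendental subfield, and recover the one remaining homogenizing variable and the $C$-norm variable by Theorem~\ref{t2.4}, each contributing a single transcendental generator.

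The main obstacle is the $C_p\times C_p$ quotient case for general $p$. To run the Lemma~\ref{lem4}-type argument one must verify that the scalar constants produced by $b^p,c^p,[b,c]\in C$ satisfy the norm-one compatibility $a\cdot\sigma(a)\cdots\sigma^{p-1}(a)=1$ that converts the twisted cyclic action into a genuine permutation action amenable to the explicit invariant construction; arranging the generators so that both order-$p$ operators act by a clean shift-and-scale on the dehomogenized variables, and checking this multiplicative relation for every $G$ on the classification list, is where the genuine work lies and where a naive reduction is most likely to break down.
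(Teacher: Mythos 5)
First, a point of orientation: the paper does not prove this theorem at all. It is quoted from \cite[Theorem 1.8]{Ka2} and used as a black box (it settles the family $\Phi_8$ in the proof of Theorem \ref{thmain1}), so the only proof to compare against is the one in the cited reference, which occupies essentially all of that paper. Your outline does follow the expected opening moves of that argument --- fix generators and relations from the classification of $p$-groups with a cyclic subgroup of index $p^2$, induce a faithful representation from a faithful character of the cyclic subgroup $C$, and use Theorem~\ref{t2.3} to reduce $k(G)$ to the monomial fixed field $k(V)^G$ --- but as a proof it has genuine gaps.

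The decisive gap is that you defer exactly the hard part. Lemma~\ref{lem4} is a statement about $C_3\times C_3$ only; no analogue for $C_p\times C_p$ with general $p$ is available in this paper or follows formally from the quoted tools, and verifying your ``norm-one compatibility'' for every group in the classification is not a routine check --- it is the substance of \cite{Ka2}, where the proof runs through the classification family by family with separate explicit computations. Your fallback tools also fail for general $p$: Theorem~\ref{t2.5} requires $\bm{Z}[\zeta_n]$ to be a unique factorization domain, which is false for $n=p^2$ already at $p=7$ (the class number of $\bm{Q}(\zeta_{49})$ is not $1$) and false even for $n=p$ at $p=23$, so the cyclic-quotient branch of your argument does not cover all primes. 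Two smaller points in the same vein: the hypothesis gives a cyclic subgroup of index $p^2$, not a normal one, so the normality of $C$ must itself be extracted from the classification; and faithfulness of $\fn{Ind}_C^G\chi$ is not automatic --- its kernel meets $C$ trivially but need not be trivial; it is trivial when $Z(G)\le C$, while if $Z(G)\not\le C$ then $G$ has an abelian normal subgroup of index $p$, a case your scheme would again push onto the UFD-dependent Theorem~\ref{t2.5}. In short, the proposal is a reasonable road map whose unresolved steps are precisely where the theorem's difficulty lives, which is why the present paper cites \cite{Ka2} rather than reproving it.
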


\begin{theorem} \label{t2.6}
Let $k$ be a field with $\fn{gcd} \{ \fn{char} k, n+1 \}=1$,
$A=(a_{ij})_{0\le i,j\le n} \in GL_{n+1}(k)$ and
$k(x_1,\ldots,x_n)$ be the rational function field of $n$
variables over $k$. Define $L_i=a_{i0}+\sum_{1\le j\le n}
a_{ij}x_j\in k[x_1,\ldots,x_n]$ for $0\le i\le n$ and define a
$k$-automorphism $\sigma:k(x_1,\ldots,x_n)\to k(x_1,\ldots,x_n)$
by $\sigma(x_i)=L_i/L_0$ for $1\le i\le n$. If the characteristic
polynomial of the matrix $A$ is $T^{n+1}-c\in k[T]$ where $c \in k
\setminus \{0 \}$, then there exist $y_1,\ldots,y_n \in
k(x_1,\ldots,x_n)$ such that $k(x_1,\ldots,x_n)=k(y_1,\ldots,y_n)$
and $\sigma(y_i)=y_{i+1}$ for $1\le i\le n-1$, $\sigma(y_n)=c/(y_1
y_2 \cdots y_n)$.
\end{theorem}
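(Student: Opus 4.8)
The plan is to \emph{linearize} the projective action by homogenizing. I would introduce a new indeterminate $t_0$ and set $t_i=x_i t_0$ for $1\le i\le n$, so that $k(t_0,t_1,\ldots,t_n)=k(x_1,\ldots,x_n)(t_0)$ is a rational function field in $n+1$ variables. Define a $k$-linear action $\tilde\sigma$ on the space $W$ of linear forms $\operatorname{span}_k(t_0,\ldots,t_n)$ by $\tilde\sigma(t_i)=\sum_{0\le j\le n}a_{ij}t_j$ and extend it to a $k$-automorphism of $k(t_0,\ldots,t_n)$; this is invertible since $A\in GL_{n+1}(k)$. Dividing numerator and denominator by $t_0$ shows $\tilde\sigma(t_i)/\tilde\sigma(t_0)=L_i/L_0=\sigma(x_i)$, so $\tilde\sigma$ restricts to $\sigma$ on $k(x_1,\ldots,x_n)$, and the problem reduces to understanding the honest linear action of $A$.

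Next I would put $A$ into companion form. By Cayley--Hamilton the characteristic polynomial relation gives $A^{n+1}=cI$. The hypothesis $\gcd\{\fn{char}k,n+1\}=1$ together with $c\ne 0$ makes $T^{n+1}-c$ separable (its derivative $(n+1)T^n$ is coprime to it), so its distinct roots force the minimal polynomial of $A$ to coincide with the characteristic polynomial $T^{n+1}-c$. Therefore $\tilde\sigma|_W$ (whose matrix is $A^{\mathsf{T}}$, similar to $A$) is nonderogatory and admits a cyclic vector $s_0\in W$. Setting $s_i=\tilde\sigma^i(s_0)$ for $1\le i\le n$ produces a $k$-basis $s_0,\ldots,s_n$ of $W$ satisfying
\[
\tilde\sigma(s_i)=s_{i+1}\ (0\le i\le n-1),\qquad \tilde\sigma(s_n)=\tilde\sigma^{n+1}(s_0)=c\,s_0,
\]
the last equality because $(\tilde\sigma|_W)^{n+1}=c\cdot\mathrm{id}$.

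Finally I would produce the variables $y_i$ as ratios of these linear forms. Define $y_i=s_i/s_{i-1}$ for $1\le i\le n$. Since $s_0,\ldots,s_n$ and $t_0,\ldots,t_n$ are two $k$-bases of $W$, dividing through by $t_0$ shows each $s_i/s_0$ lies in $k(x_1,\ldots,x_n)$ and conversely, whence $k(s_1/s_0,\ldots,s_n/s_0)=k(x_1,\ldots,x_n)$; and because $s_i/s_0=y_1y_2\cdots y_i$ telescopes, $k(y_1,\ldots,y_n)=k(s_1/s_0,\ldots,s_n/s_0)=k(x_1,\ldots,x_n)$. A direct computation then gives $\sigma(y_i)=\tilde\sigma(s_i)/\tilde\sigma(s_{i-1})=s_{i+1}/s_i=y_{i+1}$ for $1\le i\le n-1$, while $\sigma(y_n)=c\,s_0/s_n=c/(y_1\cdots y_n)$ using $s_n/s_0=y_1\cdots y_n$, exactly the asserted twisted-cyclic action.

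The main obstacle is the reduction to companion form: I must ensure that $T^{n+1}-c$ is simultaneously the characteristic and the minimal polynomial of $A$, so that a cyclic vector exists over $k$ itself and not merely over $\bar k$. This is precisely where the coprimality hypothesis enters, through the separability of $T^{n+1}-c$. Once the cyclic basis $s_0,\ldots,s_n$ is in hand, the identification of the correct ratios $s_i/s_{i-1}$ and the verification that they generate the field and transform as stated are routine.
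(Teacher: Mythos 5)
Your proof is correct and follows essentially the same route as the paper's: both homogenize to a linear action on an $(n+1)$-dimensional space of linear forms, use separability of $T^{n+1}-c$ (from the coprimality hypothesis) to conclude the matrix is nonderogatory and hence admits a cyclic basis $s_0,\ldots,s_n$ with $\tilde\sigma(s_i)=s_{i+1}$, $\tilde\sigma(s_n)=cs_0$, and then set $y_i=s_i/s_{i-1}$. Your write-up is somewhat more detailed on the verifications (the transpose issue and the field equality $k(y_1,\ldots,y_n)=k(x_1,\ldots,x_n)$), but the argument is the paper's argument.
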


\begin{proof}
Consider another rational function field $k(u_0,u_1,\ldots,u_n)$.
Embed the field $k(x_1,\ldots,x_n)$ into $k(u_0,u_1,\ldots,u_n)$ by $x_i=u_i/u_0$ for $1\le i\le n$.

Define a $k$-automorphism $\Phi: k(u_0,\ldots,u_n)\to k(u_0,\ldots,u_n)$ by $\Phi(u_i)=\sum_{0\le j\le n} a_{ij} u_j$ for $0\le i\le n$.
It is clear that the restriction of $\Phi$ to $k(x_1,\ldots,x_n)$ is nothing but $\sigma$.

Since the characteristic polynomial of $A$ is the separable
polynomial $T^{n+1}-c$, the rational normal form of the matrix
$(a_{ij})_{0 \le i,j \le n}$ is the companion matrix of the
polynomial $T^{n+1}-c$. It follows that there exist
$v_0,v_1,\ldots,v_n$ such that (i) $v_i=\sum_{0\le j\le n}
b_{ij}u_j$ for $0\le i\le n$ where $(b_{ij})_{0\le i,j\le n} \in
GL_{n+1}(k)$, (ii) $\Phi(v_i)=v_{i+1}$ for $0\le i\le n-1$ and
$\Phi(v_n)=c v_0$.

Define $y_i=v_i/v_{i-1}$ for $1\le i\le n$. Then
$k(y_1,\ldots,y_n)=k(x_1,\ldots,x_n)$ and $\sigma(y_i)=y_{i+1}$
for $1\le i\le n-1$ and $\sigma(y_n)=c/(y_1y_2\cdots y_n)$.
\end{proof}

\begin{lemma}[{\cite[Lemma 3.6]{HKY}}] \label{l2.8}
Let $k$ be any field, $a\in k\backslash\{0\}$. Let $\sigma$ be a
$k$-automorphism acting on $k(x,y)$ by
\begin{align*}
\sigma &: x\mapsto y\mapsto \frac{a}{xy}.
\end{align*}
Then $k(x,y)^{\langle \sigma \rangle}$ is $k$-rational.
\end{lemma}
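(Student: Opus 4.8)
The plan is to make the group and the fixed field completely explicit, reduce to the rationality of a single affine surface, and then rationalise that surface by an explicit substitution. First I would note that $\sigma$ has order $3$: setting $x_1=x$, $x_2=y$ and $x_3=a/(xy)$, the automorphism $\sigma$ cyclically permutes $x_1,x_2,x_3$, whose product is the constant $a$, so $\langle\sigma\rangle\simeq C_3$. Since the relation $x_1x_2x_3=a$ is symmetric, the full symmetric group $S_3$ also acts on $k(x,y)$ by permuting $x_1,x_2,x_3$, and this action is faithful; hence $k(x,y)^{S_3}=k(e_1,e_2)$ is $k$-rational, where $e_1,e_2$ are the first two elementary symmetric functions (and $e_3=a$).

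Because $[S_3:C_3]=2$, the field $k(x,y)^{C_3}$ is the quadratic extension of $k(e_1,e_2)$ obtained by adjoining the Vandermonde element $\delta=(x_1-x_2)(x_2-x_3)(x_3-x_1)$, which is fixed by $C_3$ but negated by transpositions; thus $k(x,y)^{C_3}=k(e_1,e_2)(\delta)$ with $\delta^2$ the discriminant of $T^3-e_1T^2+e_2T-a$. After the linear change $Q=e_1^2-3e_2$ this becomes $-27\,\delta^2=(e_1^3-3e_1Q-27a)^2-4Q^3$, so everything is reduced to proving that this double cover of the $(e_1,Q)$-plane is $k$-rational.

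For the rationalisation I would pass to coordinates in which the surface reads $V^2=108a\mu^3-4n^3-3n^2$ (a short computation, most transparent after introducing the Lagrange resolvents, which is harmless when $\zeta_3\in k$, as in the intended application). This surface is singular at the origin, and the two monomial substitutions $V=n\xi$ and $\mu=n\psi$ turn it into $n^2(\xi^2+3+4n)=108a\,n^3\psi^3$; cancelling $n^2$ leaves the relation $\xi^2+3=n\,(108a\psi^3-4)$, which is linear in $n$. Solving $n=(\xi^2+3)/(108a\psi^3-4)$ then exhibits $k(x,y)^{C_3}=k(\xi,\psi)$, which is $k$-rational. I expect the genuine difficulty to lie not in this final trick but in making the reduction uniform over an arbitrary field: the Vandermonde/quadratic description degenerates in characteristic $2$ and the substitution $Q=e_1^2-3e_2$ in characteristic $3$, so those cases need a separate Artin--Schreier-type argument, and---more seriously---one must produce the rationalising coordinates over $k$ itself rather than only over $k(\zeta_3)$. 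The tempting shortcut of linearising $\sigma$ on three homogeneous coordinates via $u_0\mapsto u_1\mapsto u_2\mapsto a u_0$ fails for exactly this reason: there $\sigma^3$ equals the scalar $a$, which need not be a cube in $k$, so $\sigma$ has infinite order on the linear model and the no-name lemmas (Theorem \ref{t2.3}, Theorem \ref{thHK}) do not apply directly.
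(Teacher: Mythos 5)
Your proposal cannot be checked against a proof in this paper, because the paper contains none: Lemma \ref{l2.8} is imported from \cite[Lemma 3.6]{HKY} without proof (and is never invoked later in the paper), so your argument has to be judged on its own merits. Its computational core is in fact correct, including the step you left as ``a short computation'': with $\zeta_3\in k$ and $\fn{char}k\neq 2,3$, set $r_0=x_1+x_2+x_3$, $r_1=x_1+\zeta_3x_2+\zeta_3^2x_3$, $r_2=x_1+\zeta_3^2x_2+\zeta_3x_3$; the $\sigma$-invariants $\mu=r_2/r_1^2$, $m=r_2^3/r_1^3$, $p=r_0r_2/r_1^2$ generate $k(x,y)^{\langle\sigma\rangle}$, and multiplying the relation $r_0^3+r_1^3+r_2^3-3r_0r_1r_2=27a$ by $r_2^3/r_1^6$ gives $m^2+m(1-3p)+p^3=27a\mu^3$. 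Completing the square and putting $n=p-1$ yields exactly your $V^2=108a\mu^3-4n^3-3n^2$, and your final substitution $V=n\xi$, $\mu=n\psi$, $n=(\xi^2+3)/(108a\psi^3-4)$ does show that the fixed field equals $k(\xi,\psi)$.

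The genuine gap is that this proves strictly less than the statement, which asserts $k$-rationality for \emph{any} field $k$. Your argument covers only $\fn{char}k\neq 2,3$ with $\zeta_3\in k$; you flag the remaining cases yourself, but you offer no argument for them, and they cannot be deferred as technicalities. The case $\zeta_3\notin k$ (e.g.\ $k=\bm{Q}$) is the serious one: all of the rationalising coordinates above lie in $k(\zeta_3)(x,y)$ and are moved by the Galois involution $\zeta_3\mapsto\zeta_3^{-1}$ (which sends $m\mapsto 1/m$, $p\mapsto p/m$, $\mu\mapsto \mu/m$), so what your argument establishes over such $k$ is rationality of the fixed field after the quadratic base change to $k(\zeta_3)$. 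For surfaces, rationality does not descend along quadratic extensions in general --- Severi--Brauer and Ch\^atelet surfaces are the standard counterexamples --- so producing generators defined over $k$ itself is precisely the content of the lemma for these fields, not a loose end; and the cases $\fn{char}k=2,3$, where your equations degenerate, likewise need actual (Artin--Schreier type, resp.\ unipotent/triangular) arguments that are not sketched. As written, then, the proposal is a correct proof of a weaker statement than Lemma \ref{l2.8}, and the part it omits is where the difficulty of the cited result of \cite{HKY} lies.
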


\section{Groups of order 243}

{}From the data base of GAP, there are $67$ groups of order $243$.
Their GAP codes are designated as $G(243,i)$ for $1 \le i \le 67$.
{}From now on, we abbreviate $G(243,i)$ as $G(i)$.\\

\noindent
\begin{tabular}{c|c|c|l|c}
family & rank & class & $G=G(i)=G(243,i)$, $i\in$
& \#\\\hline
$\Phi_1$ & $$ & $1$ & $\{ 1, 10, 23, 31, 48, 61, 67 \}$ ($G$: abelian)& $7$\\
$\Phi_2$ & $3$ & $2$ & $\{ 2, 11, 12, 21, 24, 32, 33, 34, 35, 36, 49, 50, 62, 63, 64 \}$ & $15$\\
$\Phi_3$ & $4$ & $3$ & $\{ 13, 14, 15, 16, 17, 18, 19, 20, 51, 52, 53, 54, 55 \}$ & $13$\\
$\Phi_4$ & $5$ & $2$ & $\{ 37, 38, 39, 40, 41, 42, 43, 44, 45, 46, 47 \}$ & $11$\\
$\Phi_5$ & $5$ & $2$ & $\{ 65, 66 \}$ & $2$\\
$\Phi_6$ & $5$ & $3$ & $\{ 3, 4, 5, 6, 7, 8, 9 \}$ & $7$\\
$\Phi_7$ & $5$ & $3$ & $\{ 56, 57, 58, 59, 60 \}$ & $5$\\
$\Phi_8$ & $5$ & $3$ & $\{ 22 \}$ ($G\simeq C_{27}\rtimes C_{9}$) & $1$\\
$\Phi_9$ & $5$ & $4$ & $\{ 25, 26, 27 \}$ ($G\simeq (C_9\times C_9)\rtimes C_3$)
& $3$\\
$\Phi_{10}$ & $5$ & $4$ & $\{ 28, 29, 30 \}$ & $3$\\\hline
total & $$ & $$ &  & $67$
\end{tabular}\\ \\

In the following we list the generators and relations of the $17$
groups $G=G(i)$ which belong to $\Phi_6$ ($3 \le i \le 9$),
$\Phi_5$ ($i=65, 66$), $\Phi_7$ ($56 \le i \le 60$) and
$\Phi_{10}$ ($28 \le i \le 30$). Then we give some faithful
representations of them over a field $k$ containing $\zeta_e$
where $e=\exp(G)$.

Note that $Z(G)\simeq C_3\times C_3$ (resp. $C_3$) when $G$
belongs to $\Phi_6$ (resp. $\Phi_5$, $\Phi_7$ or $\Phi_{10}$).

\bigskip
\begin{idef}{Case $\Phi_6$}
$G=G(i)=\langle f_1, f_2, f_3, f_4, f_5 \rangle$, $3\le i\le 9$ with
 $Z(G)=\langle f_4,f_5\rangle\simeq C_3\times C_3$, satisfying  common relations
 \[
 [f_2,f_1]=f_3, \quad [f_3,f_1]=f_4, \quad [f_3,f_2]=f_5, \quad f_3^3=f_4^3=f_5^3=1
 \] and extra relations
\begin{enumerate}
\item [(1)] ${\fn{for}} \ G(3)\ (\Phi_6(1^5))\quad\ :  f_1^3=1, ~ f_2^3=1$;
\item [(2)] ${\fn{for}} \ G(4)\ (\Phi_6(221)b_1):  f_2^3=1, ~ f_1^3=f_4$;
\item [(3)] ${\fn{for}} \ G(5)\ (\Phi_6(221)c_2):  f_1^3=f_2^3=f_4$;
\item [(4)] ${\fn{for}} \ G(6)\ (\Phi_6(221)d_1):  f_1^3=1, ~ f_2^{3}=f_4^2$;
\item [(5)] ${\fn{for}} \ G(7)\ (\Phi_6(221)a)\ :  f_1^3=f_4, f_2^{3}=f_4^2$;
\item [(6)] ${\fn{for}} \ G(8)\ (\Phi_6(221)c_1):  f_1^3=f_5, f_2^3=f_4$;
\item [(7)] ${\fn{for}} \ G(9)\ (\Phi_6(221)d_0):  f_1^{3}=f_5^2, f_2^3=f_4$.
\end{enumerate}
\end{idef}

\bigskip
\begin{idef}{Case $\Phi_5$}
$G=G(i)=\langle f_1, f_2, f_3, f_4, f_5 \rangle$, $i=65, 66$ with
 $Z(G)=\langle f_5\rangle\simeq C_3$ satisfying common relations
\begin{align*}
[f_2, f_1]&=[f_4,f_1]=[f_3,f_2]=f_5,\quad
[f_1,f_3]=[f_2, f_4]=[f_3, f_4]=1, \\
f_2^3&=f_3^3=f_4^3=f_5^3=1
\end{align*}
and extra relations
\begin{enumerate}
\item [(1)] ${\fn{for}} \ G(65)\ (\Phi_5(1^5))\ \ : f_1^3=1$;
\item [(2)] ${\fn{for}} \ G(66)\ (\Phi_5(2111):  f_1^3=f_5$.
\end{enumerate}
\end{idef}

\bigskip
\begin{idef}{Case $\Phi_7$}
$G=G(i)=\langle f_1, f_2, f_3, f_4, f_5 \rangle$, $56\le i\le 60$ with
$Z(G)=\langle f_5\rangle\simeq C_3$, satisfying common relations
\[
[f_2,f_1]=f_4,\quad [f_3,f_2]=[f_4,f_1]=f_5,\quad
[f_1,f_3]=[f_2,f_4]=[f_3, f_4]=1,\quad f_4^3=f_5^3=1
\]
and extra relations
\begin{enumerate}
\item [(1)] ${\fn{for}} \ G(56)\ (\Phi_7(2111)b_1): f_1^3=f_2^3=f_3^3=1$;
\item [(2)] ${\fn{for}} \ G(57)\ (\Phi_7(2111)b_2): f_1^3=f_3^3=1, ~ f_2^3=f_5$;
\item [(3)] ${\fn{for}} \ G(58)\ (\Phi_7(1^5))\qquad: f_1^3=f_3^3=1, ~ f_2^3=f_5^2$;
\item [(4)] ${\fn{for}} \ G(59)\ (\Phi_7(2111)a)\ : f_3^3=1, ~ f_1^3=f_5, f_2^3=f_5^2$;
\item [(5)] ${\fn{for}} \ G(60)\ (\Phi_7(2111)c)\ :f_1^3=f_2^3=1, ~  f_3^3=f_5$.
\end{enumerate}
\end{idef}

\bigskip
\begin{idef}{Case $\Phi_{10}$}
$G=G(i)=\langle f_1, f_2, f_3, f_4, f_5 \rangle$, $28\le i\le 30$ with
$Z(G)=\langle f_5\rangle\simeq C_3$, satisfying common relations
\begin{align*}
[f_2,f_1]&=f_3,\quad [f_3,f_1]=f_4,\quad [f_3,f_2]=[f_4,f_1]=f_5, \quad
[f_2,f_4]=[f_3, f_4]=1,\\
f_4^3&=f_5^3=1,\quad f_2^3=f_4^2,\quad f_3^3=f_5^2
\end{align*}
and extra relations
\begin{enumerate}
\item [(1)] ${\fn{for}} \ G(28)\ (\Phi_{10}(1^5))\qquad : f_1^3=1$;
\item [(2)] ${\fn{for}} \ G(29)\ (\Phi_{10}(2111)a_0): f_1^3=f_5$;
\item [(3)] ${\fn{for}} \ G(30)\ (\Phi_{10}(2111)a_1): f_1^3=f_5^2$.
\end{enumerate}
\end{idef}

\bigskip
We now give some faithful representations
for groups which belong to $\Phi_6$, $\Phi_5$, $\Phi_7$ and $\Phi_{10}$
respectively.
Let $I_n$ be the $n\times n$ identity matrix,
\begin{align*}
c_3^{(i)}&=\left[
\begin{array}{ccc}
0 & 0 & 1 \\
\zeta^i & 0 & 0 \\
0 & 1 & 0
\end{array}
\right],\quad
c_3=c_3^{(0)}=\left[
\begin{array}{ccc}
0 & 0 & 1 \\
1 & 0 & 0 \\
0 & 1 & 0
\end{array}
\right],\\
d_3&=\left[
\begin{array}{ccc}
1 & 0 & 0 \\
0 & \zeta & 0 \\
0 & 0 & \zeta^2
\end{array}
\right],\quad
d_9^{(i)}=\left[
\begin{array}{ccc}
1 & 0 & 0 \\
0 & \eta^i & 0 \\
0 & 0 & \eta^{2i}
\end{array}
\right],\quad
e_3^{(i)}=\left[
\begin{array}{ccc}
1 & 0 & 0 \\
0 & 1 & 0 \\
0 & 0 & \zeta^i
\end{array}
\right]
\end{align*}
where $\eta^3=\zeta$, $\zeta^3=1$.


\begin{idef}{Case $\Phi_6$}
For groups $G=G(i)$, $(3\leq i\leq 9)$ which belong to $\Phi_6$,
we take the following $6$-dimensional faithful representations
where
\[
f_3  \mapsto \begin{bmatrix} d_3 & {\bf 0} \\ {\bf 0} & d_3 \end{bmatrix}, \\
f_4 \mapsto \begin{bmatrix} \zeta I_3 & {\bf 0} \\ {\bf 0} & I_3
\end{bmatrix},  f_5 \mapsto \begin{bmatrix} I_3 & {\bf 0} \\ {\bf 0} &
\zeta I_3 \end{bmatrix}
\]
are common for each $3\leq i\leq 9$ and
\end{idef}
\begin{enumerate}
\item [(1)] ${\fn{for}} \ G(3):\
f_1\mapsto\left[
\begin{array}{cc}
c_3 & \mathbf{0}\\
\mathbf{0} & e_3^{(2)}
\end{array}
\right],
f_2\mapsto\left[
\begin{array}{cc}
e_3^{(1)} & \mathbf{0} \\
\mathbf{0} & c_3
\end{array}
\right];$
\item [(2)] ${\fn{for}} \ G(4):\
f_1\mapsto\left[
\begin{array}{cc}
c_3^{(1)} & \mathbf{0}\\
\mathbf{0} & e_3^{(2)}
\end{array}
\right],
f_2\mapsto\left[
\begin{array}{cc}
e_3^{(1)} & \mathbf{0} \\
\mathbf{0} & c_3
\end{array}
\right];$
\item [(3)] ${\fn{for}} \ G(5):\
f_1\mapsto\left[
\begin{array}{cc}
c_3^{(1)} & \mathbf{0}\\
\mathbf{0} & e_3^{(2)}
\end{array}
\right],
f_2\mapsto\left[
\begin{array}{cc}
\eta e_3^{(1)} & \mathbf{0} \\
\mathbf{0} & c_3
\end{array}
\right];$
\item [(4)] ${\fn{for}} \ G(6):\
f_1\mapsto\left[
\begin{array}{cc}
c_3 & \mathbf{0}\\
\mathbf{0} & e_3^{(2)}
\end{array}
\right],
f_2\mapsto\left[
\begin{array}{cc}
\eta^2 e_3^{(1)} & \mathbf{0} \\
\mathbf{0} & c_3
\end{array}
\right];$
\item [(5)] ${\fn{for}} \ G(7):\
f_1\mapsto\left[
\begin{array}{cc}
c_3^{(1)} & \mathbf{0}\\
\mathbf{0} & e_3^{(2)}
\end{array}
\right],
f_2\mapsto\left[
\begin{array}{cc}
\eta^2 e_3^{(1)} & \mathbf{0} \\
\mathbf{0} & c_3
\end{array}
\right];$
\item [(6)] ${\fn{for}} \ G(8):\
f_1\mapsto\left[
\begin{array}{cc}
c_3 & \mathbf{0}\\
\mathbf{0} & \eta e_3^{(2)}
\end{array}
\right],
f_2\mapsto\left[
\begin{array}{cc}
\eta e_3^{(1)} & \mathbf{0} \\
\mathbf{0} & c_3
\end{array}
\right];$
\item [(7)] ${\fn{for}} \ G(9):\
f_1\mapsto\left[
\begin{array}{cc}
c_3 & \mathbf{0}\\
\mathbf{0} & \eta^2 e_3^{(2)}
\end{array}
\right],
f_2\mapsto\left[
\begin{array}{cc}
\eta e_3^{(1)} & \mathbf{0} \\
\mathbf{0} & c_3
\end{array}
\right].$
\end{enumerate}

\begin{idef}{Case $\Phi_5$}
For groups $G=G(i)$, $(i=65, 66)$ which belong to $\Phi_5$,
we take the following $9$-dimensional faithful representations
which are induced from
a linear character on $\langle f_3,f_4,f_5\rangle$ where
\begin{align*}
f_2\mapsto\left[
\begin{array}{ccc}
c_3 & \mathbf{0} & \mathbf{0} \\
\mathbf{0} & \zeta c_3 & \mathbf{0} \\
\mathbf{0} & \mathbf{0} & \zeta^2 c_3
\end{array}
\right],
f_3\mapsto\left[
\begin{array}{ccc}
d_3 & \mathbf{0} & \mathbf{0} \\
\mathbf{0} & d_3 & \mathbf{0} \\
\mathbf{0} & \mathbf{0} & d_3
\end{array}
\right],
f_4\mapsto\left[
\begin{array}{ccc}
I_3 & \mathbf{0} & \mathbf{0} \\
\mathbf{0} & \zeta I_3 & \mathbf{0} \\
\mathbf{0} & \mathbf{0} & \zeta^2 I_3
\end{array}
\right],
f_5\mapsto\zeta I_9
\end{align*}
are common for each $i=65,66$ and
\end{idef}
\begin{enumerate}
\item [(1)] ${\fn{for}} \ G(65):\
f_1\mapsto\left[
\begin{array}{ccc}
\mathbf{0} & \mathbf{0} & I_3 \\
I_3 & \mathbf{0} & \mathbf{0} \\
\mathbf{0} & I_3 & \mathbf{0}
\end{array}
\right];$
\item [(2)] ${\fn{for}} \ G(66):\
f_1\mapsto\left[
\begin{array}{ccc}
\mathbf{0} & \mathbf{0} & I_3 \\
\zeta I_3 & \mathbf{0} & \mathbf{0} \\
\mathbf{0} & I_3 & \mathbf{0}
\end{array}
\right]$.
\end{enumerate}

\begin{idef}{Case $\Phi_7$}
For groups $G=G(i)$, $(56\leq i\leq 60)$ which belong to $\Phi_7$,
we take the following $9$-dimensional faithful representations
which are induced from
a linear character on $\langle f_3,f_4,f_5\rangle$ where
\begin{align*}
f_4\mapsto\left[
\begin{array}{ccc}
I_3 & \mathbf{0} & \mathbf{0} \\
\mathbf{0} & \zeta I_3 & \mathbf{0} \\
\mathbf{0} & \mathbf{0} & \zeta^2 I_3
\end{array}
\right],
f_5\mapsto\zeta I_9
\end{align*}
are common for each $56\leq i\leq 60$ and
\end{idef}
\begin{enumerate}
\item [(1)] ${\fn{for}} \ G(56):\
f_1\mapsto\left[
\begin{array}{ccc}
\mathbf{0} & \mathbf{0} & I_3 \\
I_3 & \mathbf{0} & \mathbf{0} \\
\mathbf{0} & I_3 & \mathbf{0}
\end{array}
\right],
f_2\mapsto\left[
\begin{array}{ccc}
c_3 & \mathbf{0} & \mathbf{0} \\
\mathbf{0} & c_3 & \mathbf{0} \\
\mathbf{0} & \mathbf{0} & \zeta c_3
\end{array}
\right],
f_3\mapsto\left[
\begin{array}{ccc}
d_3 & \mathbf{0} & \mathbf{0} \\
\mathbf{0} & d_3 & \mathbf{0} \\
\mathbf{0} & \mathbf{0} & d_3
\end{array}
\right];$
\item [(2)] ${\fn{for}} \ G(57):\
f_1\mapsto\left[
\begin{array}{ccc}
\mathbf{0} & \mathbf{0} & I_3 \\
I_3 & \mathbf{0} & \mathbf{0} \\
\mathbf{0} & I_3 & \mathbf{0}
\end{array}
\right],
f_2\mapsto\left[
\begin{array}{ccc}
c_3^{(1)} & \mathbf{0} & \mathbf{0} \\
\mathbf{0} & c_3^{(1)} & \mathbf{0} \\
\mathbf{0} & \mathbf{0} & \zeta c_3^{(1)}
\end{array}
\right],
f_3\mapsto\left[
\begin{array}{ccc}
d_3 & \mathbf{0} & \mathbf{0} \\
\mathbf{0} & d_3 & \mathbf{0} \\
\mathbf{0} & \mathbf{0} & d_3
\end{array}
\right];$
\item [(3)] ${\fn{for}} \ G(58):\
f_1\mapsto\left[
\begin{array}{ccc}
\mathbf{0} & \mathbf{0} & I_3 \\
I_3 & \mathbf{0} & \mathbf{0} \\
\mathbf{0} & I_3 & \mathbf{0}
\end{array}
\right],
f_2\mapsto\left[
\begin{array}{ccc}
c_3^{(2)} & \mathbf{0} & \mathbf{0} \\
\mathbf{0} & c_3^{(2)} & \mathbf{0} \\
\mathbf{0} & \mathbf{0} & \zeta c_3^{(2)}
\end{array}
\right],
f_3\mapsto\left[
\begin{array}{ccc}
d_3 & \mathbf{0} & \mathbf{0} \\
\mathbf{0} & d_3 & \mathbf{0} \\
\mathbf{0} & \mathbf{0} & d_3
\end{array}
\right];$
\item [(4)] ${\fn{for}} \ G(59):\
f_1\mapsto\left[
\begin{array}{ccc}
\mathbf{0} & \mathbf{0} & I_3 \\
\zeta I_3 & \mathbf{0} & \mathbf{0} \\
\mathbf{0} & I_3 & \mathbf{0}
\end{array}
\right],
f_2\mapsto\left[
\begin{array}{ccc}
c_3^{(2)} & \mathbf{0} & \mathbf{0} \\
\mathbf{0} & c_3^{(2)} & \mathbf{0} \\
\mathbf{0} & \mathbf{0} & \zeta c_3^{(2)}
\end{array}
\right],
f_3\mapsto\left[
\begin{array}{ccc}
d_3 & \mathbf{0} & \mathbf{0} \\
\mathbf{0} & d_3 & \mathbf{0} \\
\mathbf{0} & \mathbf{0} & d_3
\end{array}
\right];$
\item [(5)] ${\fn{for}} \ G(60):\
f_1\mapsto\left[
\begin{array}{ccc}
\mathbf{0} & \mathbf{0} & I_3 \\
I_3 & \mathbf{0} & \mathbf{0} \\
\mathbf{0} & I_3 & \mathbf{0}
\end{array}
\right],
f_2\mapsto\left[
\begin{array}{ccc}
c_3 & \mathbf{0} & \mathbf{0} \\
\mathbf{0} & c_3 & \mathbf{0} \\
\mathbf{0} & \mathbf{0} & \zeta c_3
\end{array}
\right],
f_3\mapsto\left[
\begin{array}{ccc}
\eta d_3 & \mathbf{0} & \mathbf{0} \\
\mathbf{0} & \eta d_3 & \mathbf{0} \\
\mathbf{0} & \mathbf{0} & \eta d_3
\end{array}
\right].$
\end{enumerate}

\begin{idef}{Case $\Phi_{10}$}
For groups $G=G(i)$, $(28\leq i\leq 30)$ which belong to $\Phi_{10}$,
we take the following $9$-dimensional faithful representations
which are induced from
a linear character on $\langle f_3,f_4,f_5\rangle$ where
\begin{align*}
&f_2\mapsto\left[
\begin{array}{ccc}
\mathbf{0} & d_9^{(5)} & \mathbf{0} \\
\mathbf{0} & \mathbf{0} & d_9^{(2)} \\
d_9^{(8)} & \mathbf{0} & \mathbf{0}
\end{array}
\right],
f_3\mapsto\left[
\begin{array}{ccc}
\eta^8 e^{(1)} & \mathbf{0} & \mathbf{0} \\
\mathbf{0} & \eta^5 e^{(1)} & \mathbf{0} \\
\mathbf{0} & \mathbf{0} & \eta^2 e^{(1)}
\end{array}
\right],
f_4\mapsto\left[
\begin{array}{ccc}
d_3 & \mathbf{0} & \mathbf{0} \\
\mathbf{0} & d_3 & \mathbf{0} \\
\mathbf{0} & \mathbf{0} & d_3
\end{array}
\right],\\
&f_5\mapsto\zeta I_9
\end{align*}
are common for each $28\leq i\leq 30$ and
\end{idef}
\begin{enumerate}
\item [(1)] ${\fn{for}} \ G(28):\
f_1\mapsto\left[
\begin{array}{ccc}
c_3 & \mathbf{0} & \mathbf{0} \\
\mathbf{0} & c_3 & \mathbf{0} \\
\mathbf{0} & \mathbf{0} & c_3
\end{array}
\right];$
\item [(2)] ${\fn{for}} \ G(29):\
f_1\mapsto\left[
\begin{array}{ccc}
c_3^{(1)} & \mathbf{0} & \mathbf{0} \\
\mathbf{0} & c_3^{(1)} & \mathbf{0} \\
\mathbf{0} & \mathbf{0} & c_3^{(1)}
\end{array}
\right];$
\item [(3)] ${\fn{for}} \ G(30):\
f_1\mapsto\left[
\begin{array}{ccc}
c_3^{(2)} & \mathbf{0} & \mathbf{0} \\
\mathbf{0} & c_3^{(2)} & \mathbf{0} \\
\mathbf{0} & \mathbf{0} & c_3^{(2)}
\end{array}
\right].$
\end{enumerate}

\section{The Case $\Phi_6$: \boldmath{$G(i)$, $3\le i\le 9$}}

Let $G=G(i)$ be the $i$-th group of order 243 in
the GAP database where $3\le i\le 9$.
They belong to the isoclinism family $\Phi_6$.
In this section, we will prove that $k(G(i))$ is $k$-rational
for $3\leq i\leq 9$.

Recall that $\zeta=\zeta_3$ is a primitive $3$-rd
root of unity belonging to $k$,
and $\eta$ is a primitive $9$-th
root of unity satisfying $\eta^3=\zeta$.

\bigskip
\begin{idef}{Case 1} $G=G(3)$.

\medskip
Step 1.

We will find a faithful representation $G \rightarrow GL(V_3)$
according to the matrices as in Section 3. More precisely,
if $\{x_{11},x_{12},x_{13},x_{21},x_{22},x_{23} \}$ is a dual basis of $V_3$, we
choose the faithful representation $G \rightarrow GL(V_3)$ such that
$G$ acts on $\oplus_{1 \le i \le 2\atop 1\leq j\leq 3}k \cdot x_{ij}$
by the matrices as in Section 3. It follows that
$k(V_3)=k(x_{11},x_{12},x_{13},x_{21},x_{22},x_{23})$ and $G$ acts on it as follows.
%
\begin{align*}
f_1 : \ &
x_{11}\mapsto x_{12},x_{12}\mapsto x_{13},x_{13}\mapsto x_{11},
x_{21}\mapsto x_{21},x_{22}\mapsto x_{22},x_{23}\mapsto \zeta^2 x_{23},\\
f_2 : \ &
x_{11}\mapsto x_{11},x_{12}\mapsto x_{12},x_{13}\mapsto \zeta x_{13},
x_{21}\mapsto x_{22},x_{22}\mapsto x_{23},x_{23}\mapsto x_{21},\\
f_3 : \ &
x_{11}\mapsto x_{11},x_{12}\mapsto \zeta x_{12},x_{13}\mapsto \zeta^2 x_{13},
x_{21}\mapsto x_{21},x_{22}\mapsto \zeta x_{22},x_{23}\mapsto \zeta^2 x_{23},\\
f_4 : \ &
x_{11}\mapsto \zeta x_{11},x_{12}\mapsto \zeta x_{12},x_{13}\mapsto \zeta x_{13},
x_{21}\mapsto x_{21},x_{22}\mapsto x_{22},x_{23}\mapsto x_{23},\\
f_5 : \ &
x_{11}\mapsto x_{11},x_{12}\mapsto x_{12},x_{13}\mapsto x_{13},
x_{21}\mapsto \zeta x_{21},x_{22}\mapsto \zeta x_{22},x_{23}\mapsto \zeta x_{23}.
\end{align*}
Since $V_3$ is chosen such that it is a direct sum of inequivalent
irreducible representations, we may apply Theorem \ref{t2.3}.
We find that $k(G)$ is rational over $k(V_3)^G$. Once we show that
$k(V_3)^G$ is $k$-rational, it follows that $k(G)$ is also $k$-rational.

The remaining steps of this case are devoted to proving
$k(x_{11},x_{12},x_{13},x_{21},x_{22},x_{23})^G$ is $k$-rational.

\medskip
Step 2.

Define $y_{11}=\tfrac{x_{11}}{x_{12}}$, $y_{12}=\tfrac{x_{12}}{x_{13}}$,
$y_{13}=x_{13}$, $y_{21}=\tfrac{x_{21}}{x_{22}}$, $y_{22}=\tfrac{x_{22}}{x_{23}}$,
$y_{23}=x_{23}$.
Then $k(x_{11},x_{12},x_{13},x_{21},x_{22},x_{23})$ $=$
$k(y_{11},y_{12},y_{13},y_{21},y_{22},y_{23})$ and
\begin{align*}
f_1 : \ &
y_{11} \mapsto y_{12}, y_{12} \mapsto \tfrac{1}{y_{11}y_{12}},
y_{13} \mapsto y_{11}y_{12}y_{13},
y_{21} \mapsto y_{21}, y_{22} \mapsto \zeta y_{22},
y_{23} \mapsto \zeta^2 y_{23},\\
f_2 : \ & y_{11} \mapsto y_{11}, y_{12} \mapsto \zeta^2 y_{12},
y_{13} \mapsto \zeta y_{13}, y_{21} \mapsto y_{22}, y_{22} \mapsto
\tfrac{1}{y_{21} y_{22}},
y_{23} \mapsto y_{21} y_{22} y_{23},\\
f_3 : \ &
y_{11} \mapsto \zeta^2 y_{11}, y_{12} \mapsto \zeta^2 y_{12}, y_{13} \mapsto \zeta^2 y_{13},
y_{21} \mapsto \zeta^2 y_{21},  y_{22} \mapsto \zeta^2 y_{22}, y_{23} \mapsto \zeta^2 y_{23},\\
f_4 : \ &
y_{11} \mapsto y_{11}, y_{12} \mapsto y_{12}, y_{13} \mapsto \zeta y_{13},
y_{21} \mapsto y_{21}, y_{22} \mapsto y_{22},  y_{23} \mapsto y_{23},\\
f_5 : \ &
y_{11} \mapsto y_{11}, y_{12} \mapsto y_{12}, y_{13} \mapsto y_{13},
y_{21} \mapsto y_{21}, y_{22} \mapsto y_{22},  y_{23} \mapsto \zeta y_{23}.
\end{align*}
Apply Theorem 2.5 twice to
$k(y_{11},y_{12},y_{13},y_{21},y_{22},y_{23})
=k(y_{11},y_{12},y_{21},y_{22})(y_{13},y_{23})$,
it suffices to show that $k(y_{11},y_{12},y_{21},y_{22})^G$ is $k$-rational.

\medskip
Step 3.

Since $f_4$ and $f_5$ act trivially on $y_{11},y_{12},y_{21}$ and
$y_{22}$, we find that $k(y_{11},y_{12},y_{21},y_{22})^G$ $=$
$k(y_{11},y_{12},y_{21},y_{22})^{\langle f_1,f_2,f_3\rangle}$.
Define
\begin{align*}
&z_{1}=\frac{1}{y_{11} y_{21} y_{22}},
z_{2}=\frac{y_{12}}{y_{11}},
z_{3}=\frac{y_{21}}{y_{11}},
z_{4}=\frac{y_{22}}{y_{11}}.
\end{align*}
Because these $z_i$ are fixed by $f_3$ and the determinant of the
exponents of $z_i$ with respect to $y_j$ is $-3$, it is easy to
see that
\begin{align*}
k(y_{11},y_{12},y_{21},y_{22})^{\langle f_3\rangle}
=k(z_{1},z_{2},z_{3},z_{4}).
\end{align*}
Note that
\begin{align*}
f_1 : \ &z_1 \mapsto \frac{\zeta^2 z_1}{z_2},
z_2 \mapsto \frac{z_1z_3z_4}{z_2^2},
z_3 \mapsto \frac{z_3}{z_2},
z_4 \mapsto \frac{\zeta z_4}{z_2},\\
f_2 : \ &z_1 \mapsto z_3, z_2 \mapsto \zeta^2 z_2,
z_3 \mapsto z_4, z_4 \mapsto z_1.
\end{align*}
Define
\begin{align*}
w_1=\frac{z_1+z_3+z_4}{3},
w_2=z_2,
w_3=\frac{z_1+\zeta^2 z_3+\zeta z_4}{3},
w_4=\frac{z_1+\zeta z_3+\zeta^2 z_4}{3}.
\end{align*}
Then $k(z_1,z_2,z_3,z_4)
=k(w_1,w_2,w_3,w_4)$ and
\begin{align*}
f_1 : \ &w_1 \mapsto \frac{\zeta^2 w_4}{w_2},
w_2 \mapsto \frac{(w_1+w_3+w_4)
(w_1+\zeta w_3+\zeta^2 w_4)(w_1+\zeta^2 w_3+\zeta w_4)}{w_2^2},\\
&w_3 \mapsto \frac{\zeta^2 w_1}{w_2},
w_4 \mapsto \frac{\zeta^2 w_3}{w_2},\\
f_2 : \ &w_1 \mapsto w_1, w_2 \mapsto \zeta^2 w_2,
w_3 \mapsto \zeta w_3, w_4 \mapsto \zeta^2 w_4.
\end{align*}
Define
\begin{align*}
p_1=w_1,
p_2=\frac{\zeta^2 w_4}{w_2},
p_3=\frac{w_3^2}{w_1w_4},
p_4=\frac{w_1^2}{w_3w_4}.
\end{align*}
By the determinant trick again, we find that
\begin{align*}
k(w_1,w_2,w_3,w_4)^{\langle f_2\rangle}
=k(p_1,p_2,p_3,p_4)
\end{align*}
and
\begin{align*}
f_1 : \ p_1 \mapsto p_2,
p_2 \mapsto \frac{p_3p_4}
{p_1p_2(1-3p_3p_4+p_3^2p_4+p_3p_4^2)},
p_3 \mapsto p_4,
p_4 \mapsto \frac{1}{p_3p_4}.
\end{align*}
It remains to show that $k(p_1,p_2,p_3,p_4)^{\langle f_1\rangle}$
is $k$-rational.

\medskip
Step 4.

Define
\begin{align*}
q_1=p_1, q_2=p_2, q_3=\frac{1}{1+p_3+p_3p_4},
q_4=\frac{p_3}{1+p_3+p_3p_4}.
\end{align*}
Then $k(p_1,p_2,p_3,p_4)=k(q_1,q_2,q_3,q_4)$ and
\begin{align*}
f_1 : \ &q_1 \mapsto q_2,
q_2 \mapsto \frac{q_3q_4(1-q_3-q_4)}
{q_1q_2(q_3-2q_3^2+q_3^3-5q_3q_4+6q_3^2q_4+q_4^2+3q_3q_4^2-q_4^3)},\\
&q_3 \mapsto q_4, q_4 \mapsto 1-q_3-q_4.
\end{align*}
Define
\begin{align*}
r_1=q_1, r_2=q_2, r_3=q_3+\zeta^2 q_4+\zeta (1-q_3-q_4),
r_4=q_3+\zeta q_4+\zeta^2 (1-q_3-q_4).
\end{align*}
Then $k(q_1,q_2,q_3,q_4)=k(r_1,r_2,r_3,r_4)$ and
\begin{align*}
f_1 : \ r_1 \mapsto r_2,
r_2 \mapsto \frac{1+r_3^3-3r_3r_4+r_4^3}{3r_1r_2(3r_3r_4-r_4^3(\zeta+2)+r_3^3(\zeta-1))},
r_3 \mapsto \zeta r_3, r_4 \mapsto \zeta^2 r_4.
\end{align*}
We also define
\begin{align*}
s_1=\frac{r_3}{1+r_3+r_4}r_1,
s_2=f_1(s_1)=\frac{\zeta r_3}{1+\zeta r_3+\zeta^2 r_4}r_2,
s_3=r_3, s_4=r_4.
\end{align*}
Then $k(r_1,r_2,r_3,r_4)=k(s_1,s_2,s_3,s_4)$ and
\begin{align*}
f_1 :  s_1 \mapsto s_2, s_2 \mapsto
\frac{s_3^3}{3s_1s_2(3s_3s_4-s_4^3(\zeta+2)+s_3^3(\zeta-1))}, s_3
\mapsto \zeta s_3, s_4 \mapsto \zeta^2 s_4.
\end{align*}
Define $t_1=s_1, t_2=s_2$,
\begin{align*}
t_3=f_1(s_2)
=\frac{(\frac{s_3}{s_4})^3}{3s_1s_2(3(\frac{s_3}{s_4})(\frac{1}{s_4})-(\zeta+2)+(\frac{s_3}{s_4})^3(\zeta-1))},
\end{align*}
$t_4=\frac{s_3}{s_4}$.
Then $k(s_1,s_2,s_3,s_4)=k(t_1,t_2,t_3,t_4)$ and
\begin{align*}
f_1 : \ t_1 \mapsto t_2,
t_2 \mapsto t_3,
t_3 \mapsto t_1, t_4\mapsto \zeta^2 t_4.
\end{align*}
By Theorem \ref{t2.3}, $k(t_1,t_2,t_3,t_4)^{\langle f_1\rangle}$
is rational over $k(t_4)^{\langle f_1\rangle}$. Since
$k(t_4)^{\langle f_1\rangle}=k(t_4^3)$ is $k$-rational, it follows
that $k(t_1,t_2,t_3,t_4)^{\langle f_1\rangle}$ is $k$-rational.
Hence we conclude that $k(G(3))$ is $k$-rational.\\
\end{idef}

\bigskip
\begin{idef}{Case 2} $G=G(4), G(5), G(6), G(7), G(8), G(9)$.

\medskip
For $G=G(i)$, $(i=4,5,6,7,8,9)$, apply the same method as in Case
1: $G=G(3)$. We finally reduce the question to the rationality of
$k(t_1,t_2,t_3,t_4)^{\langle f_1\rangle}$ where the action of
$\langle f_1\rangle$ on $k(t_1,t_2,t_3,t_4)$ is given by
\begin{align*}
f_1 : \ t_1 \mapsto \zeta^{-j} t_2,
t_2 \mapsto t_3,
t_3 \mapsto \zeta^j t_1, t_4\mapsto \zeta^2 t_4
\end{align*}
where
\begin{align*}
j=\begin{cases}0\quad\ {\rm if}\quad i=3,6,8,9,\\
1 \quad\ {\rm if}\quad i=4,5,7.\end{cases}
\end{align*}
Thus $k(G(i))$ is $k$-rational for $i=6,8,9$ by the same reason as
in Case 1. When $i=4,5,7$, define
\begin{align*}
u_1=\zeta t_1, u_2=t_2, u_3=t_3, u_4=t_4.
\end{align*}
Then $k(u_1,u_2,u_3,u_4)=k(t_1,t_2,t_3,t_4)$ and
\begin{align*}
f_1 : \ u_1 \mapsto u_2,
u_2 \mapsto u_3,
u_3 \mapsto u_1, u_4\mapsto \zeta^2 u_4.
\end{align*}
Hence $k(G(i))$ is also $k$-rational for $i=4,5,7$.
\end{idef}


\section{The Case $\Phi_5$: \boldmath{$G(i)$, $65\le i\le 66$}}

Let $G=G(i)$ be the $i$-th group of order 243 in
the GAP database where $i=65, 66$.
They belong to the isoclinism family $\Phi_5$.
In this section, we will prove that $k(G(i))$ is $k$-rational
for $i=65, 66$.
We will prove the rationality of $G(65)$ first,
and deduce the rationality of $G(66)$ from it.
Note that $G(65)$ and $G(66)$ are
extraspecial $3$-groups of order $243$.

\bigskip
\begin{idef}{Case 1} $G=G(65)$.

\medskip
Step 1.

Choose a faithful representation $G=G(65) \rightarrow GL(V_{65})$
according to the matrices as in Section 3. By Theorem \ref{t2.3},
it remains to show that $k(V_{65})^G$ is $k$-rational. The action
of $G$ on
$k(V_{65})=k(x_{11},x_{12},x_{13},x_{21},x_{22},x_{23},x_{31},x_{32},x_{33})$
is given as follows.
\begin{align*}
f_1 : \ &
x_{11}\mapsto x_{21},x_{12}\mapsto x_{22},x_{13}\mapsto x_{23},
x_{21}\mapsto x_{31},x_{22}\mapsto x_{32},x_{23}\mapsto x_{33},\\
&x_{31}\mapsto x_{11},x_{32}\mapsto x_{12},x_{33}\mapsto x_{13},\\
f_2 : \ &
x_{11}\mapsto x_{12},x_{12}\mapsto x_{13},x_{13}\mapsto x_{11},
x_{21}\mapsto \zeta x_{22},x_{22}\mapsto \zeta x_{23},x_{23}\mapsto \zeta x_{21},\\
&x_{31}\mapsto \zeta^2 x_{32},x_{32}\mapsto \zeta^2 x_{33},x_{33}\mapsto \zeta^2 x_{31},\\
f_3 : \ &
x_{11}\mapsto x_{11},x_{12}\mapsto \zeta x_{12},x_{13}\mapsto \zeta^2 x_{13},
x_{21}\mapsto x_{21},x_{22}\mapsto \zeta x_{22},x_{23}\mapsto \zeta^2 x_{23},\\
&x_{31}\mapsto x_{31},x_{32}\mapsto \zeta x_{32},x_{33}\mapsto \zeta^2 x_{33},\\
f_4 : \ &
x_{11}\mapsto x_{11},x_{12}\mapsto x_{12},x_{13}\mapsto x_{13},
x_{21}\mapsto \zeta x_{21},x_{22}\mapsto \zeta x_{22},x_{23}\mapsto \zeta x_{23},\\
&x_{31}\mapsto \zeta^2 x_{31},x_{32}\mapsto \zeta^2 x_{32},x_{33}\mapsto \zeta^2 x_{33},\\
f_5 : \ &
x_{11}\mapsto \zeta x_{11},x_{12}\mapsto \zeta x_{12},x_{13}\mapsto \zeta x_{13},
x_{21}\mapsto \zeta x_{21},x_{22}\mapsto \zeta x_{22},x_{23}\mapsto \zeta x_{23},\\
&x_{31}\mapsto \zeta x_{31},x_{32}\mapsto \zeta x_{32},x_{33}\mapsto \zeta x_{33}.
\end{align*}

\medskip
Step 2.

Define $y_{11}=\frac{x_{11}}{x_{12}}$, $y_{12}=\frac{x_{12}}{x_{13}}$,
$y_{13}=x_{13}$, $y_{21}=\frac{x_{21}}{x_{22}}$, $y_{22}=\frac{x_{22}}{x_{23}}$,
$y_{23}=x_{23}$, $y_{31}=\frac{x_{31}}{x_{32}}$, $y_{32}=\frac{x_{32}}{x_{33}}$,
$y_{33}=x_{33}$.
Then
\[
k(x_{11},x_{12},x_{13},x_{21},x_{22},x_{23},x_{31},x_{32},x_{33})
=k(y_{11},y_{12},y_{13},y_{21},y_{22},y_{23},y_{31},y_{32},y_{33})
\]
and
\begin{align*}
f_1 : \ &
y_{11} \mapsto y_{21}, y_{12} \mapsto y_{22}, y_{13} \mapsto y_{23},
y_{21} \mapsto y_{31}, y_{22} \mapsto y_{32},  y_{23} \mapsto y_{33},\\
&y_{31} \mapsto y_{11}, y_{32} \mapsto y_{12}, y_{33} \mapsto y_{13},\\
f_2 : \ &
y_{11} \mapsto y_{12}, y_{12} \mapsto \tfrac{1}{y_{11} y_{12}},
y_{13} \mapsto y_{11} y_{12} y_{13}, y_{21} \mapsto y_{22},
y_{22} \mapsto \tfrac{1}{y_{21} y_{22}},
y_{23} \mapsto \zeta y_{21} y_{22} y_{23},\\
&y_{31} \mapsto y_{32},
y_{32} \mapsto \tfrac{1}{y_{31} y_{32}}, y_{33} \mapsto \zeta^2 y_{31} y_{32} y_{33},\\
f_3 : \ &
y_{11} \mapsto \zeta^2 y_{11}, y_{12} \mapsto \zeta^2 y_{12}, y_{13} \mapsto \zeta^2 y_{13},
y_{21} \mapsto \zeta^2 y_{21},  y_{22} \mapsto \zeta^2 y_{22}, y_{23} \mapsto \zeta^2 y_{23},\\
&y_{31} \mapsto \zeta^2 y_{31}, y_{32} \mapsto \zeta^2 y_{32},  y_{33} \mapsto \zeta^2 y_{33},\\
f_4 : \ &
y_{11} \mapsto y_{11}, y_{12} \mapsto y_{12}, y_{13} \mapsto y_{13},
y_{21} \mapsto y_{21}, y_{22} \mapsto y_{22},  y_{23} \mapsto \zeta y_{23},\\
&y_{31} \mapsto y_{31}, y_{32} \mapsto y_{32}, y_{33} \mapsto \zeta^2 y_{33},\\
f_5 : \ &
y_{11} \mapsto y_{11}, y_{12} \mapsto y_{12}, y_{13} \mapsto \zeta y_{13},
y_{21} \mapsto y_{21}, y_{22} \mapsto y_{22},  y_{23} \mapsto \zeta y_{23},\\
&y_{31} \mapsto y_{31}, y_{32} \mapsto y_{32}, y_{33} \mapsto \zeta y_{33}.
\end{align*}

Define
\begin{align*}
&z_1=\frac{y_{22}}{y_{32}},
z_2=\frac{y_{32}}{y_{12}},
z_3=\frac{y_{31}y_{32}}{y_{21}y_{22}},
z_4=\frac{y_{11}y_{12}}{y_{31}y_{32}},
z_5=y_{11}y_{22}y_{31},\\
&z_6=\frac{y_{12}y_{32}}{y_{21}y_{22}},
z_7=\frac{y_{13}y_{23}}{y_{33}^2},
z_8=\frac{y_{23}y_{33}}{y_{13}^2},
z_9=y_{13}y_{22}y_{23}y_{31}y_{32}y_{33}.
\end{align*}
Then
\begin{align*}
k(y_{11},y_{12},y_{13},y_{21},y_{22},y_{23},y_{31},y_{32},y_{33})^{\langle f_3,f_4,f_5\rangle}
=k(z_1,z_2,z_3,z_4,z_5,z_6,z_7,z_8,z_9)
\end{align*}
because the $z_i$'s are $\langle f_3,f_4,f_5\rangle$-invariants and
the determinant of the matrix of exponents is $-27$:
\begin{align}
{\rm Det}\left(
\begin{array}{ccccccccc}
 0 & 0 & 0 & 1 & 1 & 0 & 0 & 0 & 0 \\
 0 & -1 & 0 & 1 & 0 & 1 & 0 & 0 & 0 \\
 0 & 0 & 0 & 0 & 0 & 0 & 1 & -2 & 1 \\
 0 & 0 & -1 & 0 & 0 & -1 & 0 & 0 & 0 \\
 1 & 0 & -1 & 0 & 1 & -1 & 0 & 0 & 1 \\
 0 & 0 & 0 & 0 & 0 & 0 & 1 & 1 & 1 \\
 0 & 0 & 1 & -1 & 1 & 0 & 0 & 0 & 1 \\
 -1 & 1 & 1 & -1 & 0 & 1 & 0 & 0 & 1 \\
 0 & 0 & 0 & 0 & 0 & 0 & -2 & 1 & 1
\end{array}
\right) =-27.\label{det27}
\end{align}
The actions of $f_1$ and $f_2$
on $k(z_1,z_2,z_3,z_4,z_5,z_6,z_7,z_8,z_9)$ are given by
\begin{align*}
f_1 : \ & z_1 \mapsto z_2,
z_2 \mapsto \frac{1}{z_1 z_2},
z_3 \mapsto z_4,
z_4 \mapsto \frac{1}{z_3 z_4},\\
& z_5 \mapsto \frac{z_5}{z_1^2 z_3},
z_6 \mapsto \frac{z_1 z_6}{z_3},
z_7 \mapsto z_8,
z_8 \mapsto \frac{1}{z_7 z_8},
z_9 \mapsto \frac{z_4 z_9}{z_1},\\
f_2 : \ & z_1 \mapsto z_3,
z_2 \mapsto z_4,
z_3 \mapsto \frac{1}{z_1 z_3},
z_4 \mapsto \frac{1}{z_2 z_4},\\
& z_5 \mapsto z_6,
z_6 \mapsto \frac{1}{z_5 z_6},
z_7 \mapsto \frac{
 z_4 z_7}{z_3},
z_8 \mapsto \frac{
z_8}{z_3 z_4^2},
z_9 \mapsto \frac{
z_4 z_9}{z_1}.\\
\end{align*}
It remains to show that
$k(z_1,z_2,z_3,z_4,z_5,z_6,z_7,z_8,z_9)^{\langle f_1,f_2\rangle}$
is $k$-rational.

\medskip
Step 3.

Define two elements $a=\frac{1}{z_1z_3}$ and $b=\frac{1}{z_3}$.
They satisfy the following identities:
\begin{align*}
\left(a,f_1(a),f_1^2(a),f_2(a),f_2^2(a),\frac{a}{f_2(a)},
\frac{f_2(a)}{f_2^2(a)}\right)&=
\left(\frac{1}{z_1z_3},\frac{1}{z_2z_4},z_1z_2z_3z_4,
z_1,z_3,\frac{1}{z_1^2z_3},\frac{z_1}{z_3}\right),\\
\left(b,f_2(b),f_2^2(b),f_1(b),f_1^2(b),\frac{b}{f_1(b)},
\frac{f_1(b)}{f_1^2(b)}\right)&=
\left(\frac{1}{z_3},z_1z_3,\frac{1}{z_1},
\frac{1}{z_4},z_3z_4,\frac{z_4}{z_3}, \frac{1}{z_3z_4^2}\right).
\end{align*}

Apply Lemma \ref{lem4} twice to
$k(z_1,z_2,z_3,z_4,z_9)(z_5,z_6,z_7,z_8)$, there exist elements
$Z_5,Z_6,Z_7,Z_8$ $\in$ $k(z_1,z_2,z_3,z_4,z_9)$ such that
\[
k(z_1,z_2,z_3,z_4,z_9)(z_5,z_6,z_7,z_8)=k(z_1,z_2,z_3,z_4,z_9)(Z_5,Z_6,Z_7,Z_8)
\]
and $\sigma(Z_i)=Z_i$ for $5\leq i\leq 8$ and any $\sigma\in G$.

Namely, the action of $G$ on $k(z_1,z_2,z_3,z_4,Z_5,Z_6,Z_7,Z_8,z_9)$
is given by
\begin{align}
f_1 : \ & z_1 \mapsto z_2,
z_2 \mapsto \frac{1}{z_1 z_2},
z_3 \mapsto z_4,
z_4 \mapsto \frac{1}{z_3 z_4},\nonumber\\
& Z_5 \mapsto Z_5, Z_6 \mapsto Z_6, Z_7 \mapsto Z_7,
Z_8 \mapsto Z_8, z_9 \mapsto \frac{z_4 z_9}{z_1},\label{actz}\\
f_2 : \ & z_1 \mapsto z_3,
z_2 \mapsto z_4,
z_3 \mapsto \frac{1}{z_1 z_3},
z_4 \mapsto \frac{1}{z_2 z_4},\nonumber\\
& Z_5 \mapsto Z_5, Z_6 \mapsto Z_6, Z_7 \mapsto Z_7,
Z_8 \mapsto Z_8, z_9 \mapsto \frac{z_4 z_9}{z_1}.\nonumber
\end{align}
Hence it suffices to show that $k(z_1,z_2,z_3,z_4,z_9)^{\langle f_1,f_2\rangle}$
is $k$-rational.

\medskip
Step 4.

Define $w_1=\frac{z_4z_9}{z_1}, w_2=\frac{z_9}{z_1z_2z_3},
w_3=z_3, w_4=z_4, w_5=z_9$.
Then $k(z_1,z_2,z_3,z_4,z_9)=k(w_1,w_2,w_3,w_4,w_5)$ and
\begin{align*}
f_1 : \ & w_1 \mapsto w_2,
w_2 \mapsto w_5,
w_3 \mapsto w_4,
w_4 \mapsto \frac{1}{w_3 w_4},
w_5 \mapsto w_1,\\
f_2 : \ & w_1 \mapsto w_2,
w_2 \mapsto w_5,
w_3 \mapsto \frac{w_1}{w_3 w_4 w_5},
w_4 \mapsto \frac{w_2w_3}{w_1},
w_5 \mapsto w_1.
\end{align*}
Define
\begin{align*}
s_1&=w_1+w_2+w_5, s_2=w_1+\zeta^2w_2+\zeta w_5,
s_3=w_1+\zeta w_2+\zeta^2 w_5,\\
s_4&=\frac{1+\zeta^2 w_3+\zeta w_3w_4}{1+w_3+w_3w_4},
s_5=\frac{1+\zeta w_3+\zeta^2 w_3w_4}{1+w_3+w_3w_4}.
\end{align*}
Then $k(w_1,w_2,w_3,w_4,w_5)=k(s_1,s_2,s_3,s_4,s_5)$ and
\begin{align*}
f_1 : \ & s_1 \mapsto s_1,
s_2 \mapsto \zeta s_2,
s_3 \mapsto \zeta^2 s_3,
s_4 \mapsto \zeta s_4,
s_5 \mapsto \zeta^2 s_5,\\
f_2 : \ & s_1 \mapsto s_1,
s_2 \mapsto \zeta s_2,
s_3 \mapsto \zeta^2 s_3,
s_4 \mapsto \frac{\zeta^2(s_2+s_1s_4+s_3s_5)}{s_1+s_3s_4+s_2 s_5},
s_5 \mapsto \frac{\zeta(s_3+s_2s_4+s_1s_5)}{s_1+s_3s_4+s_2s_5}.
\end{align*}
Define $t_1=s_1, t_2=s_2^3, t_3=\frac{s_3}{s_2^2}, t_4=s_2^2s_4, t_5=s_2s_5$.
Then $k(s_1,s_2,s_3,s_4,s_5)^{\langle f_1\rangle}=k(t_1,t_2,t_3,t_4,t_5)$ and
\begin{align}
f_2 : \ & t_1 \mapsto t_1,
t_2 \mapsto t_2,
t_3 \mapsto t_3,
t_4 \mapsto \frac{\zeta(t_2+t_1t_4+t_2t_3t_5)}{t_1+t_3t_4+t_5},
t_5 \mapsto \frac{\zeta^2(t_2t_3+t_4+t_1t_5)}{t_1+t_3t_4+t_5}.\label{actt}
\end{align}
Hence we will show that
$k(t_1,t_2,t_3,t_4,t_5)^{\langle f_2\rangle}$ is $k$-rational.

\medskip
Step 5.

We use Theorem \ref{t2.6} to simplify the action of $f_2$.
Define $L_i\in k(t_1,t_2,t_3)[t_4,t_5]$ to be the polynomials
satisfying $f_2(t_4)=\frac{L_1}{L_0}$ and $f_2(t_5)=\frac{L_2}{L_0}$
in the above Formula (\ref{actt}).
The coefficient matrix of $L_0, L_1, L_2$ with respect to $t_4$, $t_5$ is
\begin{align*}
\left[
\begin{array}{ccc}
 t_1 & t_3 & 1 \\
 \zeta t_2 & \zeta t_1 & \zeta t_2t_3 \\
 \zeta^2 t_2 t_3 & \zeta^2 & \zeta^2 t_1
\end{array}
\right]
\end{align*}
whose characteristic polynomial is $T^3-D$ where
$D=t_1^3 + t_2 - 3 t_1 t_2 t_3 + t_2^2 t_3^3$.
By Theorem \ref{t2.6}, there exist $u_4,u_5$ such
that $k(t_1,t_2,t_3,u_4,u_5)=k(t_1,t_2,t_3,t_4,t_5)$ and
\begin{align*}
f_2 : \ & t_1 \mapsto t_1, t_2 \mapsto t_2, t_3 \mapsto t_3, u_4
\mapsto u_5, u_5 \mapsto \frac{t_1^3 + t_2 - 3 t_1 t_2 t_3 + t_2^2
t_3^3}{u_4u_5}.
\end{align*}
Define $U_1=\frac{t_1t_3-1}{t_3}, U_2=\frac{t_2t_3^3-1}{t_3},
U_3=t_3, U_4=u_4, U_5=u_5$.
Then $k(t_1,t_2,t_3,u_4,u_5)=k(U_1,U_2,U_3,U_4,U_5)$ and
\begin{align*}
f_2 : \ & U_1 \mapsto U_1,
U_2 \mapsto U_2,
U_3 \mapsto U_3,
U_4 \mapsto U_5,
U_5 \mapsto \frac{3U_1^2-3U_1U_2+U_2^2+U_1^3U_3}{U_3U_4U_5}.
\end{align*}
Note that both the numerator and the denominator of $f_2(U_5)$ are
linear in $U_3$.
Define $v_1=U_1, v_2=U_2,
v_3=\frac{3U_1^2-3U_1U_2+U_2^2+U_1^3U_3}{U_3U_4U_5},
v_4=U_4, v_5=U_5$.
Then $k(U_1,U_2,U_3,U_4,U_5)$ $=$ $k(v_1,v_2,v_3,v_4,v_5)$ and
\begin{align*}
f_2 : \ & v_1 \mapsto v_1,
v_2 \mapsto v_2,
v_3 \mapsto v_4,
v_4 \mapsto v_5,
v_5 \mapsto v_3.
\end{align*}

The cyclic group $\langle f_2\rangle$ acts linearly on
$k(v_1,v_2,v_3,v_4,v_5)$.
It follows from Theorem \ref{t2.2} that
$k(v_1,v_2,v_3,v_4,v_5)^{\langle f_2\rangle}$ is $k$-rational.
Hence $k(V_{65})^G$ is $k$-rational.\\
\end{idef}

\bigskip
\begin{idef}{Case 2} $G=G(66)$.

\medskip
For $G=G(66)$, we can follow the same way as in Case 1: $G=G(65)$.
In the present situation, the formula (\ref{actz}) should be
replaced by the following
\begin{align*}
f_1 : \ & z_1 \mapsto z_2, z_2 \mapsto \frac{1}{z_1 z_2}, z_3
\mapsto z_4, z_4 \mapsto \frac{1}{z_3 z_4},
z_9 \mapsto \zeta \frac{z_4 z_9}{z_1},\\
f_2 : \ & z_1 \mapsto z_3, z_2 \mapsto z_4, z_3 \mapsto
\frac{1}{z_1 z_3}, z_4 \mapsto \frac{1}{z_2 z_4}, z_9 \mapsto
\frac{z_4 z_9}{z_1}.
\end{align*}
Apply Theorem \ref{t2.3} to $k(z_1,z_2,z_3,z_4)(z_9)$. We find an
element $Z_9$ with the property that
$k(z_1,z_2,z_3,z_4,z_9)=k(z_1,z_2,z_3,z_4,Z_9)$ and
$f_1(Z_9)=f_2(Z_9)=Z_9$. Thus $k(G(66))$ is $k$-isomorphic to
$k(G(65))$. Hence the result.
\end{idef}

{}From the above proof (see Step 3 of Case 1, in particular), we
obtain the following proposition as a corollary.
\begin{prop}\label{cor1}
Let $k$ be any field.
Let $\langle f_1,f_2\rangle\simeq C_3\times C_3$ act on
the rational function field $k(z_1,z_2,z_3,z_4,z_9)$ with
five variables $z_1,z_2,z_3,z_4,z_9$ over $k$ by $k$-automorphism
\begin{align*}
f_1 : \ & z_1 \mapsto z_2,
z_2 \mapsto \frac{1}{z_1 z_2},
z_3 \mapsto z_4,
z_4 \mapsto \frac{1}{z_3 z_4},
z_9 \mapsto z_9,\\
f_2 : \ & z_1 \mapsto z_3,
z_2 \mapsto z_4,
z_3 \mapsto \frac{1}{z_1 z_3},
z_4 \mapsto \frac{1}{z_2 z_4},
z_9 \mapsto z_9.
\end{align*}
Then $k(z_1,z_2,z_3,z_4,z_9)^{\langle f_1,f_2\rangle}$ is
$k$-rational. However, it is not clear to us
whether $k(z_1,z_2,z_3,z_4)^{\langle f_1,f_2\rangle}$ is
$k$-rational or not.
\end{prop}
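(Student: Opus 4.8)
The guiding idea is that the extra variable $z_9$ enters \emph{linearly}, so the precise way $H=\langle f_1,f_2\rangle$ moves it is irrelevant for the isomorphism type of the fixed field; this lets me reduce the proposition to the computation already carried out for $G(65)$. Write $K=k(z_1,z_2,z_3,z_4)$, equipped with the monomial $H$-action on the four variables displayed in the statement, which is exactly the action on $z_1,z_2,z_3,z_4$ appearing in (\ref{actz}). Then $H$ stabilises $K$, and on the last coordinate it acts by $\sigma(z_9)=a_\sigma z_9$ with $a_\sigma\in K^\times$: here $a_{f_1}=a_{f_2}=1$, whereas in (\ref{actz}) one has $a_{f_1}=a_{f_2}=z_4/z_1$.

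First I would apply Theorem \ref{t2.4} with base field $K$ and the single variable $z_9$. Since $\sigma(K)\subseteq K$ and $\sigma(z_9)=a_\sigma z_9$ is an affine (indeed scalar) action with $a_\sigma\neq 0$, the theorem gives $K(z_9)^H=K^H(f)$ for a single $f\in K[z_9]$; as the transcendence degree of $K(z_9)^H$ over $K^H$ equals $1$, this $f$ is transcendental over $K^H$. The crucial point is that this holds for \emph{any} admissible scalar: taking $a_\sigma=1$ shows the field of the proposition equals a simple transcendental extension of $K^H$, while taking $a_\sigma=z_4/z_1$ shows the field attached to (\ref{actz}) equals $K^H(f')$ for some $f'$ transcendental over $K^H$. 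Consequently both fields are simple transcendental extensions of the \emph{same} field $K^H$, hence $K^H$-isomorphic (send one transcendence generator to the other), and in particular $k$-isomorphic.

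It then remains only to recall that the twisted field is rational: Steps 4 and 5 of Case 1 (for $G(65)$) prove precisely that $k(z_1,z_2,z_3,z_4,z_9)^{H}$ under the action (\ref{actz}) is $k$-rational. Transporting this across the $k$-isomorphism above yields the $k$-rationality of $K^H(z_9)$, which is the first assertion. I would stress that the twist is not a nuisance but the very device that makes the computation feasible: in Step 4 the twisted orbit $z_9,f_1(z_9),f_1^2(z_9)$ spans a three-dimensional permutation block, and it is this block that gets linearised; with $z_9$ fixed no such block exists, so one genuinely rationalises ``through'' the twist and then transports back via Theorem \ref{t2.4}.

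The main obstacle, and the reason for the cautionary second sentence, is exactly what this argument cannot do: it only exhibits $K^H(z_9)$ as rational, i.e.\ shows that $K^H=k(z_1,z_2,z_3,z_4)^{H}$ becomes rational after adjoining a single transcendental. Passing from this to the rationality of $K^H$ itself would require cancelling that transcendental, an instance of the Zariski cancellation problem that is unavailable here; hence the rationality of $k(z_1,z_2,z_3,z_4)^{H}$ stays open. The only technical points I would verify are that $z_9$ really transforms by a scalar of $K^\times$ under (\ref{actz}), so that Theorem \ref{t2.4} applies verbatim, and that the reduction chain used for $G(65)$ — built from Lemma \ref{lem4} and Theorems \ref{t2.4} and \ref{t2.6} — delivers the twisted rationality over the base field in question; the decoupling step itself is characteristic-free.
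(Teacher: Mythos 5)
Your proposal is correct and matches the paper's own (very terse) proof: the paper likewise obtains the proposition by identifying its fixed field with the twisted fixed field of (\ref{actz}) --- the untwisting of the $z_9$-coordinate is exactly what Theorem~\ref{t2.3} accomplishes in Case 2 of Section 5, where you invoke Theorem~\ref{t2.4} to the same effect --- and then citing the rationality established in Steps 4 and 5 of Case 1. One caveat, which your argument shares with (rather than adds to) the paper's one-line deduction: Steps 4 and 5 use $\zeta=\zeta_3$ explicitly (e.g.\ $s_2=w_1+\zeta^2 w_2+\zeta w_5$), so this route proves the proposition when $\zeta_3\in k$ and $\fn{char}k\neq 3$, not for literally ``any field'' as stated, even though the untwisting step itself is indeed field-free.
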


%

\section{Proof of Theorem \ref{thmain1}}

Proof of Theorem \ref{thmain1} ---------------------

 Let $G$ be a group of order $243$.

If $\fn{char} k =3$, then $k(G)$ is $k$-rational by Theorem
\ref{t2.1}.
{}From now on, we will assume that $\fn{char} k \neq 3$
and $k$ contains $\zeta_e$ where $e=\exp(G)$.

By Theorem \ref{t1.9},
$\fn{Br}_{v,\bm{C}}(\bm{C}(G))\neq 0$ if and only if
$G$ belongs to $\Phi_{10}$.
Hence we should consider the cases $\Phi_1,\ldots,\Phi_9$.

If $G$ belongs to $\Phi_1$, then $G$ is abelian group and
hence $k(G)$ is $k$-rational by Theorem \ref{t2.2}.

If $G$ belongs to $\Phi_2$, $\Phi_3$, $\Phi_4$ or $\Phi_9$,
then there exists a normal abelian subgroup $N$ of $G$
such that $G/N$ is cyclic of order $3$ (these groups correspond to the
groups in Bender's classification \cite[Section 4]{Be}). Hence
$k(G)$ is $k$-rational by Theorem \ref{t2.5}.

If $G$ belongs to $\Phi_8$, then there exists a normal cyclic subgroup $C_{27}$
of $G$ of order $27$ such that $G/C_{27}$ is cyclic of order $9$.
Hence $k(G)$ is $k$-rational by Theorem \ref{tKa2}.

If $G$ belongs to $\Phi_6$,
$k(G)$ is $k$-rational by a result as in Section 4.

If $G$ belongs to $\Phi_5$,
$k(G)$ is $k$-rational by a result as in Section 5.
\qed

\bigskip
Proof of Proposition \ref{p1.15} --------------------

Let $G$ be a group of order $243$ which belongs to $\Phi_{7}$,
i.e. $G=G(i)$, $(56\leq i\leq 60)$.

\bigskip
\begin{idef}{Case 1} $G=G(56)$ and $G(60)$.

\medskip
We choose the representation $G \longrightarrow GL(V_{56})$
and $G\longrightarrow GL(V_{60})$ given as in Section 3.

By Theorem \ref{t2.3}, $k(G(56))=k(V_{56})^{G(56)}(u_i : 1 \le i
\le 234)$ and $k(G(60))=K(V_{60})(v_i : 1 \le i \le 234)$ for some
algebraic independent variables $u_i, v_i$, $1 \le i \le 234$.

In $k(V_{56})$, define $X_i=x_i/x_1$ for $2 \le i \le 9$. Then
$k(x_1, \ldots, x_9)^{G(56)}=k(X_i:2 \le i \le 9)^{G(56)}(u)$ by
Theorem \ref{t2.4}. For $k(V_{60})=k(x_1, \ldots, x_9)$, also
define $X_i=x_i/x_1$ for $2 \le i \le 9$.
Then $k(x_1, \ldots, x_9)^{G(60)}=k(X_i:2 \le i \le 9)^{G(60)}(v)$
by Theorem \ref{t2.4}.

Compare the action of $G(56)$ on $k(X_i:2 \le i \le 9)$ with that
of $G(60)$ on $k(X_i:2 \le i \le 9)$. We find that they are
completely the same. Hence $k(X_i:2 \le i \le 9)^{G(56)}$ is
$k$-isomorphic to $k(X_i:2 \le i \le 9)^{G(60)}$.
Thus $k(G(56))$ is $k$-isomorphic to $k(G(60))$.
\end{idef}

\bigskip
\begin{idef}{Case 2} $G(i)$, $56\leq i\leq 59$.

\medskip
In these cases, the idea is the same as in Case 1 of Section 5.

For $56\leq i\leq 59$, choose the faithful representation
$G \rightarrow GL(V_i)$ according to the matrices as in Section 3.
We take $k(V_i)=k(x_{11},x_{12},x_{13},x_{21},x_{22},x_{23},x_{31},x_{32},x_{33})$
and define the same
$y_{11}=\tfrac{x_{11}}{x_{12}}$, $y_{12}=\tfrac{x_{12}}{x_{13}}$,
$y_{13}=x_{13}$, $y_{21}=\tfrac{x_{21}}{x_{22}}$, $y_{22}=\tfrac{x_{22}}{x_{23}}$,
$y_{23}=x_{23}$ as in Case 1 of Section 5.
Then we have
\[
k(x_{11},x_{12},x_{13},x_{21},x_{22},x_{23},x_{31},x_{32},x_{33})
=k(y_{11},y_{12},y_{13},y_{21},y_{22},y_{23},y_{31},y_{32},y_{33}).
\]
Define
\begin{align*}
&z_1=\frac{y_{22}}{y_{32}},
z_2=\frac{y_{32}}{y_{12}},
z_3=\frac{y_{31}y_{32}}{y_{21}y_{22}},
z_4=\frac{y_{11}y_{12}}{y_{31}y_{32}},
z_5=y_{11}y_{22}y_{31},\\
&z_6=m_1 \frac{y_{12}y_{32}}{y_{21}y_{22}},
z_7=\frac{y_{13}y_{23}}{y_{33}^2},
z_8=m_2 \frac{y_{23}y_{33}}{y_{13}^2},
z_9=y_{13}y_{22}y_{23}y_{31}y_{32}y_{33}
\end{align*}
where
\begin{align*}
(m_1,m_2)=
\begin{cases}
(1,1) &{\rm if}\quad i=56,\\
(\zeta^2,1) &{\rm if}\quad i=57,\\
(\zeta,1) &{\rm if}\quad i=58,\\
(\zeta,\zeta) &{\rm if}\quad i=59.
\end{cases}
\end{align*}
By evaluating the determinant of the exponents (see Formula
(\ref{det27})), we have
\begin{align*}
k(y_{11},y_{12},y_{13},y_{21},y_{22},y_{23},y_{31},y_{32},y_{33})^{\langle f_3,f_4,f_5\rangle}
=k(z_1,z_2,z_3,z_4,z_5,z_6,z_7,z_8,z_9)
\end{align*}
and the actions of $G(i)$, $(56\leq i\leq 59)$
on $k(z_1,z_2,z_3,z_4,z_5,z_6,z_7,z_8,z_9)$ are given by
\begin{align*}
f_1 : \ & z_1 \mapsto z_2,
z_2 \mapsto \frac{1}{z_1 z_2},
z_3 \mapsto z_4,
z_4 \mapsto \frac{1}{z_3 z_4},\\
& z_5 \mapsto \frac{z_5}{z_1^2 z_3},
z_6 \mapsto \frac{z_1 z_6}{z_3},
z_7 \mapsto z_8,
z_8 \mapsto \frac{1}{z_7 z_8},
z_9 \mapsto m_2 \frac{z_4 z_9}{z_1},\\
f_2 : \ & z_1 \mapsto z_3,
z_2 \mapsto z_4,
z_3 \mapsto \frac{1}{z_1 z_3},
z_4 \mapsto \frac{1}{z_2 z_4},\\
& z_5 \mapsto z_6,
z_6 \mapsto \frac{1}{z_5 z_6},
z_7 \mapsto \zeta \frac{z_4 z_7}{z_3},
z_8 \mapsto \zeta \frac{z_8}{z_3 z_4^2},
z_9 \mapsto m_1^2 \zeta \frac{z_4 z_9}{z_1}.
\end{align*}

Applying Theorem \ref{t2.3} to
$k(z_1,z_2,z_3,z_4,z_5,z_6,z_7,z_8)(z_9)$, there exists
$G(i)$-invariant $Z_9$ such that
$k(z_1,z_2,z_3,z_4,z_5,z_6,z_7,z_8,z_9)^{G(i)}
=k(z_1,z_2,z_3,z_4,z_5,z_6,z_7,z_8)^{G(i)}(Z_9)$. Note that the
actions of $G(i)$ $(56\leq i\leq 59)$ on these
$k(z_1,z_2,z_3,z_4,z_5,z_6,z_7,z_8)$ are exactly the same. Hence
the result.
\end{idef}

\bigskip
Proof of Proposition \ref{p1.16} --------------------

Let $G$ be a group of order $243$ which belongs to $\Phi_{10}$,
i.e. $G=G(i)$, $(28\leq i\leq 30)$. For $i=28, 29, 30$, we choose
the representation $G(i)\longrightarrow GL(V_i)$ given as in
Section 3.

The action of $G(28)$ on $k(V_{28})=
k(x_{11},x_{12},x_{13},x_{21},x_{22},x_{23},x_{31},x_{32},x_{33})$
is given by
\begin{align*}
f_1 : \ &
x_{11}\mapsto x_{12},x_{12}\mapsto x_{13},x_{13}\mapsto x_{11},
x_{21}\mapsto x_{22},x_{22}\mapsto x_{23},x_{23}\mapsto x_{21},\\
&x_{31}\mapsto x_{32},x_{32}\mapsto x_{33},x_{33}\mapsto x_{31},\\
f_2 : \ &
x_{11}\mapsto x_{31}, x_{12}\mapsto \eta^8 x_{32},
x_{13}\mapsto \eta^7 x_{33},
x_{21}\mapsto x_{11}, x_{22}\mapsto \eta^5 x_{12},
x_{23}\mapsto \eta x_{13},\\
&x_{31}\mapsto x_{21}, x_{32}\mapsto \eta^2 x_{22},
x_{33}\mapsto \eta^4 x_{23},\\
f_3 : \ &
x_{11}\mapsto \eta^8 x_{11},x_{12}\mapsto \eta^8 x_{12},
x_{13}\mapsto \eta^2 x_{13},
x_{21}\mapsto \eta^5 x_{21},x_{22}\mapsto \eta^5 x_{22},
x_{23}\mapsto \eta^8 x_{23},\\
&x_{31}\mapsto \eta^2 x_{31}, x_{32}\mapsto \eta^2 x_{32},
x_{33}\mapsto \eta^5 x_{33},\\
f_4 : \ &
x_{11}\mapsto x_{11},x_{12}\mapsto \zeta x_{12},
x_{13}\mapsto \zeta^2 x_{13},
x_{21}\mapsto x_{21},x_{22}\mapsto \zeta x_{22},
x_{23}\mapsto \zeta^2 x_{23},\\
&x_{31}\mapsto  x_{31},x_{32}\mapsto \zeta x_{32},
x_{33}\mapsto \zeta^2 x_{33},\\
f_5 : \ &
x_{11}\mapsto \zeta x_{11},x_{12}\mapsto \zeta x_{12},
x_{13}\mapsto \zeta x_{13},
x_{21}\mapsto \zeta x_{21},x_{22}\mapsto \zeta x_{22},
x_{23}\mapsto \zeta x_{23},\\
&x_{31}\mapsto \zeta x_{31},x_{32}\mapsto \zeta x_{32},
x_{33}\mapsto \zeta x_{33}.
\end{align*}
Define $y_{11}=\frac{x_{11}}{x_{12}}$, $y_{12}=\frac{x_{12}}{x_{13}}$,
$y_{13}=x_{13}$, $y_{21}=\frac{x_{21}}{x_{22}}$, $y_{22}=\frac{x_{22}}{x_{23}}$,
$y_{23}=x_{23}$, $y_{31}=\frac{x_{31}}{x_{32}}$, $y_{32}=\frac{x_{32}}{x_{33}}$,
$y_{33}=x_{33}$.
Then
\[
k(x_{11},x_{12},x_{13},x_{21},x_{22},x_{23},x_{31},x_{32},x_{33})
=k(y_{11},y_{12},y_{13},x_{21},y_{22},y_{23},y_{31},y_{32},y_{33})
\]
and
\begin{align*}
f_1 : \ &
y_{11} \mapsto y_{12}, y_{12} \mapsto \frac{1}{y_{11}y_{12}},
y_{13} \mapsto y_{11}y_{12}y_{13},
y_{21} \mapsto y_{22}, y_{22} \mapsto \frac{1}{y_{21}y_{22}},
y_{23} \mapsto y_{21}y_{22}y_{23},\\
&y_{31} \mapsto y_{32}, y_{32} \mapsto \frac{1}{y_{31}y_{32}},
y_{33} \mapsto y_{31}y_{32}y_{33},\\
f_2 : \ &
y_{11} \mapsto \eta y_{31}, y_{12} \mapsto \eta y_{32},
y_{13} \mapsto \eta^2 y_{33},
y_{21} \mapsto \eta^4 y_{11},
y_{22} \mapsto \eta^4 y_{12},
y_{23} \mapsto \eta y_{13},\\
&y_{31} \mapsto \eta^7 y_{21},
y_{32} \mapsto \eta^7 y_{22},
y_{33} \mapsto \eta^4 y_{23},\\
f_3 : \ &
y_{11} \mapsto y_{11}, y_{12} \mapsto \zeta^2 y_{12}, y_{13} \mapsto \eta^2 y_{13},
y_{21} \mapsto y_{21},  y_{22} \mapsto \zeta^2 y_{22}, y_{23} \mapsto \eta^8 y_{23},\\
&y_{31} \mapsto y_{31}, y_{32} \mapsto \zeta^2 y_{32},  y_{33} \mapsto \eta^5 y_{33},\\
f_4 : \ &
y_{11} \mapsto \zeta^2 y_{11}, y_{12} \mapsto \zeta^2 y_{12}, y_{13} \mapsto \zeta^2 y_{13},
y_{21} \mapsto \zeta^2 y_{21}, y_{22} \mapsto \zeta^2 y_{22},  y_{23} \mapsto \zeta^2 y_{23},\\
&y_{31} \mapsto \zeta^2 y_{31}, y_{32} \mapsto \zeta^2 y_{32}, y_{33} \mapsto \zeta^2 y_{33},\\
f_5 : \ &
y_{11} \mapsto y_{11}, y_{12} \mapsto y_{12}, y_{13} \mapsto \zeta y_{13},
y_{21} \mapsto y_{21}, y_{22} \mapsto y_{22},  y_{23} \mapsto \zeta y_{23},\\
&y_{31} \mapsto y_{31}, y_{32} \mapsto y_{32}, y_{33} \mapsto \zeta y_{33}.
\end{align*}

For $G=G(29)$ and $G(30)$,
we follow the same way to $G(28)$ and
take the same $y_{ij}$'s.
Define
\begin{align*}
&z_1=\frac{y_{12}}{y_{22}},
z_2=\frac{y_{21} y_{22}}{y_{11} y_{12}},
z_3=\frac{y_{32}}{y_{12} \zeta},
z_4=\frac{y_{11} y_{12} \zeta^2}{y_{31} y_{32}},
z_5=\frac{y_{12} y_{13} y_{21} y_{23}\zeta^2}{y_{31} y_{32} y_{33}^2},\\
&z_6=\frac{y_{11} y_{13} y_{32} y_{33} \zeta^2}{y_{21} y_{22} y_{23}^2},
z_7=\frac{y_{11} y_{22} y_{32} y_{33}}{y_{23}},
z_8=m_1 \frac{y_{12} y_{33}}{y_{21}^2 y_{22}^2 y_{23}},
z_9=\frac{1}{y_{11} y_{12} y_{13} y_{22} y_{23} y_{33}}
\end{align*}
where
\begin{align*}
m_1=
\begin{cases}
1 &{\rm if}\quad i=28,\\
\zeta &{\rm if}\quad i=29,\\
\zeta^2 &{\rm if}\quad i=30.
\end{cases}
\end{align*}
Then we have
\begin{align*}
k(y_{11},y_{12},y_{13},y_{21},y_{22},y_{23},y_{31},y_{32},y_{33})^{\langle f_3,f_4,f_5\rangle}
=k(z_1,z_2,z_3,z_4,z_5,z_6,z_7,z_8,z_9)
\end{align*}
because the $z_i$'s are fixed by the actions of $f_3,f_4,f_5$ and
the determinant of the matrix of exponents is $27$:
\begin{align*}
{\rm Det}\left(
\begin{array}{ccccccccc}
 0 & -1 & 0 & 1 & 0 & 1 & 1 & 0 & -1 \\
 1 & -1 & -1 & 1 & 1 & 0 & 0 & 1 & -1 \\
 0 & 0 & 0 & 0 & 1 & 1 & 0 & 0 & -1 \\
 0 & 1 & 0 & 0 & 1 & -1 & 0 & -2 & 0 \\
 -1 & 1 & 0 & 0 & 0 & -1 & 1 & -2 & -1 \\
 0 & 0 & 0 & 0 & 1 & -2 & -1 & -1 & -1 \\
 0 & 0 & 0 & -1 & -1 & 0 & 0 & 0 & 0 \\
 0 & 0 & 1 & -1 & -1 & 1 & 1 & 0 & 0 \\
 0 & 0 & 0 & 0 & -2 & 1 & 1 & 1 & -1 \\
\end{array}
\right) =27.
\end{align*}
The actions of $G(28)$, $G(29)$ and $G(30)$
on $k(z_1,z_2,z_3,z_4,z_5,z_6,z_7,z_8,z_9)$ are
given by
\begin{align*}
f_1 : \ & z_1 \mapsto z_2,
z_2 \mapsto \frac{1}{z_1 z_2},
z_3 \mapsto z_4,
z_4 \mapsto \frac{1}{z_3 z_4},\\
& z_5 \mapsto \zeta^2 \frac{z_5}{z_1^2z_3},
z_6 \mapsto \zeta^2 \frac{z_1 z_6}{z_3},
z_7 \mapsto z_8,
z_8 \mapsto \frac{z_1^2 z_2 z_6}{z_5z_7z_8},
z_9 \mapsto \zeta m_1^2 \frac{z_4 z_9}{z_1},\\
f_2 : \ & z_1 \mapsto z_3,
z_2 \mapsto z_4,
z_3 \mapsto \frac{1}{z_1 z_3},
z_4 \mapsto \frac{1}{z_2 z_4},\\
& z_5 \mapsto z_6,
z_6 \mapsto \frac{1}{z_5z_6},
z_7 \mapsto \zeta^2 \frac{z_7}{z_2z_3z_4z_6},
z_8 \mapsto \frac{z_2z_3^2z_8}{z_6},
z_9 \mapsto \zeta \frac{z_4z_9}{z_1}.
\end{align*}

%

By applying Theorem \ref{t2.4} to
$k(z_1,z_2,z_3,z_4,z_5,z_6,z_7,z_8)(z_9)$, we reduce the question
on the rationality of $k(z_1,z_2,z_3,z_4,z_5,z_6,z_7,z_8)^{\langle
f_1,f_2\rangle}$. But the actions of $f_1, f_2$ on
$k(z_1,z_2,z_3,z_4,z_5,z_6,z_7,z_8)$ are the same for these three
groups. Thus we finish the proof.

\newpage
\renewcommand{\refname}{\centering{References}}

\end{document}